\newtheorem{theorem}{Theorem}[section]
\newtheorem{lemma}[theorem]{Lemma}
\newtheorem{corollary}[theorem]{Corollary}
\newtheorem{proposition}[theorem]{Proposition}
\theoremstyle{definition}
\newtheorem{remark}[theorem]{Remark}
\newcounter{assum}
\newtheorem{assumption}[assum]{Assumption}
\newtheorem{definition}[theorem]{Definition}
\numberwithin{equation}{section}
\begin{document}
\title{\Large\bf Robin Problems of Elliptic Equations on Rough Domains:
H\"older Regularity, Green's Functions, and Harmonic Measures
\footnotetext{\hspace{-0.35cm} 2020 {\it Mathematics Subject Classification}.
{Primary 35J25; Secondary 35J15, 35J08, 35B65, 42B37.}
\endgraf{\it Key words and phrases}. Robin problem,
elliptic operator, rough domain, Harnack inequality, harmonic measure, Green's function.
\endgraf This project is partially supported by the National
Natural Science Foundation of China (Grant Nos. 12431006 and 12371093),
the Key Project of Gansu Provincial National Science Foundation (Grant No.
23JRRA1022), Longyuan Young Talents of Gansu Province,
and the Fundamental Research Funds for the Central Universities (Grant No. 2253200028).}}
\author{Jiayi Wang, Dachun Yang\footnote{Corresponding author, E-mail:
\texttt{dcyang@bnu.edu.cn}/{\color{red}\today}/Final version.},\ \ and Sibei Yang}
\date{}
\maketitle
	
\vspace{-0.8cm}
	
\begin{center}
\begin{minipage}{13.5cm}\small
{{\bf Abstract.} Let $n\ge 2$ and $s\in (n-2,n)$. Assume that $\Omega\subset
\mathbb{R}^n$ is a one-sided bounded non-tangentially accessible
 domain with $s$-Ahlfors regular boundary and $\sigma$ is
the surface measure on the boundary of $\Omega$, denoted by $\partial \Omega$.
Let $\beta$ be a non-negative measurable function on $\partial \Omega$
satisfying
$
\beta\in L^{q_0}(\partial \Omega,\sigma)~\text{with}~
q_0 \in(\frac{s}{s+2-n},\infty]~\text{and}\
\beta\ge a_0~\text{on}~E_0\subset \partial \Omega,
$
where $a_0$ is a given positive constant and $E_0\subset \partial \Omega$ is
a $\sigma$-measurable set with $\sigma(E_0)>0$. In this article,
for any $f\in L^p(\partial \Omega,\sigma)$ with $p\in(s/(s+2-n),\infty]$,
we obtain the existence and uniqueness, the global H\"older regularity, and the boundary Harnack inequality
of the weak solution to the Robin problem
$$\begin{cases}
-\mathrm{div}(A\nabla u) = 0~~&\text{in}~\Omega,\\
A\nabla u\cdot \boldsymbol{\nu}+\beta u = f~~&\text{on}~\partial \Omega,
\end{cases}
$$
where the coefficient matrix $A$ is real-valued, bounded and measurable
and satisfies the uniform ellipticity condition and where $\boldsymbol{\nu}$
denotes the outward unit normal to $\partial\Omega$. Furthermore, we
establish the existence, upper bound pointwise estimates, and the H\"older regularity
of Green's functions associated with this Robin problem.
As applications, we further prove that the harmonic measure associated with
this Robin problem is mutually absolutely continuous with respect to
the surface measure $\sigma$ and also provide a quantitative characterization
of mutual absolute continuity at small scales. These results extend the corresponding
results established by David et al. [arXiv: 2410.23914] via weakening their assumption that $\beta$ is a given positive constant.}
\end{minipage}
\end{center}
	
\vspace{0.1cm}
	
\section{Introduction and main results \label{S1}}
	
Let $n\ge 2$, $s\in (n-2,n)$, and $\Omega\subset\mathbb{R}^n$ be a one-sided bounded non-tangentially accessible
(for short, NTA) domain with mixed $s$-Ahlfors regular boundary [see Assumption
\ref{Ass} for the details]. Denote by $\sigma$ the surface measure on the boundary of $\Omega$, denoted by $\partial \Omega$.
Assume that $\beta$ is a non-negative measurable function on $\partial \Omega$ satisfying
\begin{equation}\label{eq1.1}
\beta\ge a_0~\text{on}~E_0\subset \partial \Omega~\text{and}\ \beta\in L^{q_0}(\partial \Omega,\sigma)~\text{with}~
\begin{cases}
q_0 \in \left[\dfrac{s}{s+2-n},\infty\right]&\text{when}~n\ge 3,\\
q_0 \in (1,\infty]&\text{when}~n= 2,
\end{cases}
\end{equation}
where $a_0\in (0,\infty)$ is a given constant and $E_0\subset 
\partial \Omega$ is a $\sigma$-measurable subset satisfying $\sigma(E_0)>0$.
Let $f\in L^p(\partial \Omega,\sigma)$ with some $p\in(s/(s+2-n),\infty]$.
In this article, motivated by the recent work of David et al. \cite{DDEMM24,DDEFMM} on the harmonic measure associated
with the Robin boundary value problem of the elliptic operator $-\mathrm{div}(A\nabla\cdot)$ on $\Omega$, we
first obtain the existence and uniqueness and the global
H\"older regularity of the weak solution to the Robin boundary value problem for
the second-order elliptic equation of divergence form
\begin{equation}\label{Robin-Pro}\tag{RP}
\begin{cases}
-\mathrm{div}(A\nabla u) = 0~~&\text{in}~\Omega,\\
A\nabla u\cdot \boldsymbol{\nu}+\beta u = f~~&\text{on}~\partial \Omega,
\end{cases}
\end{equation}
where the coefficient matrix $A:=\{a_{ij}\}_{i,j=1}^n$ is real-valued,
bounded, and measurable and satisfies the \emph{uniform ellipticity condition},
that is, there exists a positive constant $\mu_0\in(0,1]$ such that, for any $x\in \Omega$ and $\xi:=(\xi_1,\cdots,\xi_n)\in\mathbb{R}^n$,
\begin{equation}\label{eq1.2}
\mu_0|\xi|^2\le\sum_{i,j=1}^na_{ij}(x)\xi_i\xi_j\le\mu_0^{-1}|\xi|^2.
\end{equation}
Here, $\boldsymbol{\nu}:=(\nu_1,\ldots,\nu_n)$ denotes the \emph{outward unit normal} to $\partial\Omega$. Meanwhile,
we also establish the boundary Harnack inequality for the weak solution to the Robin problem \eqref{Robin-Pro}
and the existence and the regularity estimate of Green's functions associated
with the Robin problem \eqref{Robin-Pro}. As applications of these results,
we further prove the existence of harmonic measures on $\partial\Omega$
induced by the Robin problem \eqref{Robin-Pro} and the mutual absolute continuity between such a harmonic
measure and the surface measure $\sigma$ on $\partial\Omega$.
The main results obtained in this article extend the corresponding
results established by David et al. \cite{DDEMM24} via weakening the assumption
$\beta\equiv c$, with $c\in(0,\infty)$ being a given constant, considered in \cite{DDEMM24}
to the assumption that $\beta$ satisfies \eqref{eq1.1}. We also point out that, when considering the
existence of harmonic measures on $\partial\Omega$ associated with the Robin problem \eqref{Robin-Pro} and
the mutual absolute continuity between the harmonic measure and the surface measure $\sigma$ on $\partial\Omega$,
the additional assumption $\beta\ge a_0$ on $\partial\Omega$ is needed.

Over the past few decades, extensive research at the intersection of analysis, PDEs, and geometric measure theory has sought to
comprehensively understand the relationship between the solvability of boundary value problems on rough domains or the regularity of weak solutions to
boundary value problems on rough domains and the geometric properties of the domain under consideration (see, for example,
\cite{AHMMT20,AHMMMTV16,D00,DFM21,DFM23,DS93,dl21,HM14,K94}). In particular, the study on regularity of weak solutions to
various boundary value problems of general elliptic equations on rough domains has recently attracted considerable interest
(see, for example, \cite{BBC08,D00,DDEMM24,DFM21,DFM23,dl21,K94,yyy21}). A central motivation for this framework is to establish a
precise correspondence between the regularity properties of weak solutions and the geometric
features of the domains under consideration.

Furthermore, as a natural extension of the Dirichlet and the Neumann boundary value problems,
the Robin type boundary value problem is ubiquitous in physics, biology, and related disciplines
(see, for example, \cite{BF07,DDEFMM,DDEMM24,GFS03,GFS06,R05,w84}) and
has been studied extensively from a variety of perspectives (see, for example, \cite{AW03,BBC08,CK16,D06,DDEMM24,dl21,LS04,V12,yyz24,yyy18}).
In particular, Arendt and Warma \cite{AW03} showed that the Robin boundary condition is intermediate between the Dirichlet and the
Neumann boundary conditions and the corresponding operator semigroup satisfies a ``sandwiched" property.
Daners \cite{D00} investigated the well-posedness of the Robin boundary value problem for general elliptic equations on general domains in $\mathbb{R}^n$.
Nittka \cite{n11} and  V\'elez-Santiago \cite{V12} established global H\"older
regularity estimates for the Robin boundary value problem for general elliptic equations on bounded Lipschitz domains.
The solvability of the $L^p$ Robin problem for the Laplace operator on
bounded Lipschitz domains was addressed by Lanzani and Shen \cite{LS04} (see also \cite{yyy18}).
Meanwhile, the global Sobolev regularity for the Robin type boundary value problem on Lipschitz domains or
more general rough domains was studied by Dong and Li \cite{dl21}, as well as by Yang et al. \cite{yyy21}.
Recently, David et al. \cite{DDEMM24} established the existence of harmonic measures
associated with the Robin problem \eqref{Robin-Pro} and the mutual absolute continuity between such harmonic measure and
the surface measure on $\partial \Omega$, under the assumption that $\beta$ is a given positive constant.

To describe the main results of this article, we first recall some necessary assumptions and concepts.
	
\begin{assumption}\label{Ass}
Let $n\ge 2$, $s\in(n-2,n)$, and $\Omega\subset \mathbb{R}^n$ be an open set. The pair
$(\Omega,\sigma)$ is called a \emph{one-sided non-tangentially accessible
{\rm (for short, NTA)} pair of $s$-dimension} if it satisfies the
following geometric conditions (A1)--(A3):
\begin{enumerate}
\item[{\rm (A1)}] (\emph{Ahlfors Regularity}) The boundary of $\Omega$,
denoted by $\partial \Omega$, is $s$-\emph{Ahlfors regular}, that is, there exist a surface measure $\sigma$ supported
in $\partial \Omega$ and a positive constant $C$ such that, for any $y\in \partial \Omega$ and $r\in (0,\mathrm{diam\,}(\partial \Omega))$,
$$ C^{-1}r^s \le \sigma(\partial\Omega\cap B(y,r))\le Cr^s, $$
where $\mathrm{diam\,}(\partial \Omega):=\sup\{|x-y|:x,y\in \partial \Omega\}$ and $B(y,r):=\{z\in \mathbb{R}^n:|z-y|<r\}$.
			
\item[{\rm(A2)}] (\emph{Interior Corkscrew Condition}) There exists
a constant $C \in (0,1)$ such that, for any $y \in\partial \Omega$ and
$r\in (0,\mathrm{diam\,}(\Omega))$, there exists a point $x_y\in \Omega\cap B(y,r)$ such that $B(x_y,Cr) \subset \Omega \cap B(y,r)$.
The point $x_y$ is called a \emph{corkscrew point} with
respect to $\Omega\cap B(y,r)$.
			
\item[{\rm(A3)}] (\emph{Harnack Chains})
There exists a uniform constant $M\in[1,\infty)$ such that, for any $x,y\in\Omega$, there exists an $(M,N)$-Harnack chain
connecting $x$ and $y$, with $N\in\mathbb{N}$ depending only on $M$ and the ratio $|x-y|/\min\{\delta(x),\delta(y)\}$,
that is, there exists a chain of open balls $B_1,\ldots,B_N\subset\Omega$ such that
$x\in B_1$, $y\in B_N$, $B_k\cap B_{k+1}\neq\emptyset$ for any $k\in\{1,\ldots,N-1\}$, and $M^{-1}\mathrm{diam\,}
(B_k)\le\mathrm{dist\,}(B_k,\partial\Omega)\le M\mathrm{diam\,}(B_k)$ for any $k\in\{1,\ldots,N\}$.
Here, for any $x\in\Omega$, $\delta(x):=\mathrm{dist\,}(x,\partial\Omega):=
\inf\{|x-z|:z\in\partial\Omega\}.$
\end{enumerate}
\end{assumption}

All the constants appearing in Assumption \ref{Ass} are called the \emph{geometric constants} for $(\Omega,\sigma)$. Moreover, the domain $\Omega$ is said to have the \emph{{\rm NTA} property}
if $\Omega$ satisfies  both (A2) and (A3) of Assumption \ref{Ass}.
	
\begin{remark}
Let $\sigma$ be as in Assumption \ref{Ass}. It is easy to check that $\sigma$ is a \emph{doubling measure},
that is, there is a positive constant $C$ such that, for any $y \in \partial \Omega$ and $r\in(0,\mathrm{diam\,}(\partial\Omega))$,
\begin{equation*}
\sigma(\partial \Omega \cap B(y,2r)) \le C \sigma(\partial\Omega\cap B(y,r)).
\end{equation*}
\end{remark}
	
We point out that many known domains satisfy Assumption \ref{Ass}. Examples of such domains include Lipschitz domains in $\mathbb{R}^n$,
the complement of the four-corner Cantor set in the plane (see, for instance, \cite[Example 1]{DDEMM24}), and the
Koch snowflake (see, for instance, \cite[Chapter 1]{DS93}).
Moreover, numerous domains with fractal boundaries, including the Koch snowflake and its analogous with self-similar structures
commonly encountered in practical applications (see, for example, \cite{BBC08,S06}), also satisfy Assumption \ref{Ass}.

Let $p\in[1,\infty)$. Denote by $W^{1,p}(\Omega)$ the \emph{Sobolev space} on $\Omega$ equipped with the \emph{norm}
$$\|f\|_{W^{1,p}(\Omega)}:=\|f\|_{L^p(\Omega)}+\|\nabla f\|_{L^p(\Omega)}.$$
Here and in what follows, $\nabla f:=(\partial_1 f,\ldots,\partial_n f)$ denotes the \emph{gradient} of $f$ and $\{\partial_if\}_{i=1}^n$
are the \emph{distributional derivatives} of $f$.
	
Now, we give the definition of weak solutions for the Robin problem \eqref{Robin-Pro}.
\begin{definition}
Let $n\ge 2$, $s\in (n-2,n)$, and $(\Omega,\sigma)$ satisfy Assumption \ref{Ass}. Suppose that the function $\beta$ satisfies \eqref{eq1.1}.
Then $u\in W^{1,2}(\Omega)$ is called a \emph{weak solution} to the Robin problem \eqref{Robin-Pro} if,
for any $\varphi\in C_{\rm c}^{\infty}(\mathbb{R}^n)$ (the set of all infinitely differentiable
functions on $\mathbb{R}^n$ with compact support),
\begin{equation*}
\int_{\Omega} A\nabla u \cdot \nabla \varphi\,dx + \int_{\partial \Omega}\beta \mathrm{Tr}(u)\varphi\,d\sigma
= \int_{\partial \Omega} f\varphi\,d\sigma,
\end{equation*}
where $\mathrm{Tr}(u)$ denotes the trace of $u$ on $\partial\Omega$.
\end{definition}
Recall that the \emph{trace} of $u\in W^{1,2}(\Omega)$, denoted by $\mathrm{Tr}(u)$,
is defined as, for any $x\in \partial \Omega$,
\begin{equation}\label{eq1.4}
\mathrm{Tr} (u)(x) := \lim\limits_{\genfrac{}{}{0pt}{}{y\in \gamma(x)}
{\delta(y)\to 0}}\fint_{B(y,\delta(y)/2)} u(z)\,dz
\end{equation}
when the limit in the right-hand side of \eqref{eq1.4} exists,
where $\gamma(y):=\{z\in \Omega:|y-z| <\delta(y)\}$ with $\delta(y) := \mathrm{dist\,} (y,\partial \Omega)$. It was shown in
\cite[Theorem 6.6]{DFM23} that, for any given $u\in W^{1,2}(\Omega)$ and for almost every $x\in \partial \Omega$ with respect to $\sigma$,
the limit in the right-hand side of \eqref{eq1.4} exists and hence
$\mathrm{Tr} (u)(x)$ is well-defined.
	
Let $\alpha\in(0,1]$ and $O\subset\mathbb{R}^n$ be a subset.
We now recall the definition of the H\"older space $C^{0,\alpha}(O)$.
A function $u$ is said to belongs to the \emph{H\"older space}
$C^{0,\alpha}(O)$ if $u$ is bounded and continuous on $O$ and
$[u]_{C^{0,\alpha}(O)} <\infty$, where the H\"older semi-norm
$[u]_{C^{0,\alpha}(O)}$ is defined by setting
$$
[u]_{C^{0,\alpha}(O)}:= \sup\limits_{\genfrac{}{}{0pt}{}{x,y\in O}{x\ne y}} \dfrac{|u(x)-u(y)|}{|x-y|^{\alpha}}.$$
Meanwhile, we define the \emph{H\"older norm}
$$ \|u\|_{C^{0,\alpha}(O)}:= \sup\limits_{x\in O} |u(x)| + [u]_{C^{0,\alpha}(O)}.$$
Moreover, for any open set $O$ in $\mathbb{R}^n$, we denote by $\overline{O}$ the \emph{closure} of $O$ in $\mathbb{R}^n$.
	
Next, we give the first main result of this article.
\begin{theorem}\label{th1.1}
Let $n\ge 2$, $s\in (n-2,n)$, and $(\Omega,\sigma)$ satisfy Assumption \ref{Ass} with $\Omega$ being bounded.
Assume that $\beta$ is as in \eqref{eq1.1}.
If $f\in L^p(\partial \Omega,\sigma)$ with $p\in(s/(s+2-n),\infty]$,
then there exists a unique weak solution $u\in W^{1,2}(\Omega)$ to the Robin problem \eqref{Robin-Pro}.
		
Furthermore, there exist constants $\alpha_0\in (0,1]$ and $C\in (0,\infty)$, depending only on $n$, the geometric constants for $(\Omega,\sigma)$, and $\mu_0$ in \eqref{eq1.2}, such that, for
any given $\alpha\in (0,\alpha_0)$, $u\in C^{0,\alpha}(\overline{\Omega})$ and
\begin{equation}\label{eq1.5}
\|u\|_{C^{0,\alpha}(\overline{\Omega})}\le C\|f\|_{L^p(\partial\Omega,\sigma)}.
\end{equation}
\end{theorem}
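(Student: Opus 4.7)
The plan is to split the proof into two largely independent parts: first establish existence and uniqueness via the Lax--Milgram theorem applied to a suitable bilinear form on $W^{1,2}(\Omega)$, and then obtain the global H\"older estimate by adapting the De Giorgi--Nash--Moser oscillation-decay machinery to the Robin boundary condition on an $s$-Ahlfors regular boundary.

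For existence and uniqueness I would work with the bilinear form
\begin{equation*}
a(u,v) := \int_{\Omega} A\nabla u \cdot \nabla v\,dx + \int_{\partial\Omega}\beta\,\mathrm{Tr}(u)\,\mathrm{Tr}(v)\,d\sigma
\end{equation*}
and the linear functional $L(v) := \int_{\partial\Omega} f\,\mathrm{Tr}(v)\,d\sigma$, both defined on $W^{1,2}(\Omega)$. Boundedness of $a$ and $L$ follows from the uniform ellipticity \eqref{eq1.2}, H\"older's inequality, and a Sobolev-type trace embedding $\mathrm{Tr}\colon W^{1,2}(\Omega)\to L^{q_\ast}(\partial\Omega,\sigma)$ whose exponent is governed by the Ahlfors dimension $s$; the restrictions $p, q_0 > s/(s+2-n)$ in \eqref{eq1.1} are exactly what is needed so that the products $\beta\,\mathrm{Tr}(u)\,\mathrm{Tr}(v)$ and $f\,\mathrm{Tr}(v)$ are integrable against $\sigma$. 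The delicate point is coercivity: since no Dirichlet data are prescribed, constants are not automatically excluded and one needs a Poincar\'e-type inequality of the shape
\begin{equation*}
\|u\|_{W^{1,2}(\Omega)}^2 \le C\left(\|\nabla u\|_{L^2(\Omega)}^2 + \|\mathrm{Tr}(u)\|_{L^2(E_0,\sigma)}^2\right),
\end{equation*}
whose proof proceeds by contradiction using a Rellich-type compact trace embedding together with the NTA geometry and the fact that $\sigma(E_0)>0$. Combined with the lower bound $\beta\ge a_0$ on $E_0$, this yields $a(u,u)\gtrsim \|u\|_{W^{1,2}(\Omega)}^2$, so Lax--Milgram produces a unique weak solution.

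For the H\"older estimate the plan follows the standard four-step oscillation-decay scheme, carried out up to $\partial\Omega$. First I would establish a Caccioppoli inequality on balls $B(x_0,r)$ centered at boundary points, using test functions of the form $\eta^2(u-k)^{\pm}$ in the weak formulation, taking care to retain the boundary terms coming from $\beta$ and $f$. Second, I would run a Moser iteration in the sup/sub form to prove local boundedness
\begin{equation*}
\sup_{\Omega\cap B(x_0,r/2)}|u| \le C\left(\Xint-_{\Omega\cap B(x_0,r)}|u|^2\,dx\right)^{1/2} + C r^{\theta}\|f\|_{L^p(\partial\Omega,\sigma)},
\end{equation*}
with $\theta>0$ depending on $n,s,p$. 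Third, combining a weak Harnack-type inequality with the corkscrew condition (A2), I would derive an oscillation reduction $\mathrm{osc}(u,\Omega\cap B(x_0,r/2))\le \gamma\,\mathrm{osc}(u,\Omega\cap B(x_0,r)) + Cr^{\theta}\|f\|_{L^p(\partial\Omega,\sigma)}$ with $\gamma\in(0,1)$; iteration then yields H\"older continuity at every boundary point. Interior H\"older regularity is classical, and the Harnack chains in (A3) allow one to patch interior and boundary estimates into the global bound $\|u\|_{C^{0,\alpha}(\overline{\Omega})}\le C\|u\|_{L^2(\Omega)} + C\|f\|_{L^p(\partial\Omega,\sigma)}$. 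A global $L^2$ estimate for $u$ in terms of $\|f\|_{L^p(\partial\Omega,\sigma)}$, extracted from the coercivity step, then produces \eqref{eq1.5}.

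The main obstacle will be the boundary step of the oscillation-decay argument. The boundary integrals $\int_{\partial\Omega\cap B(x_0,r)}\beta|\mathrm{Tr}(u)|^2\,d\sigma$ and $\int_{\partial\Omega\cap B(x_0,r)}f\,\mathrm{Tr}(u)\,d\sigma$ must be absorbed by the Dirichlet energy at the correct scale $r$, and doing so requires a scale-invariant trace Sobolev inequality on balls meeting the rough $s$-Ahlfors regular boundary, together with a careful book-keeping of the $L^{q_0}$ norm of $\beta$ on small surface balls. This is precisely where the sharp integrability threshold $s/(s+2-n)$ enters, and where the argument genuinely departs from the constant-$\beta$ case of \cite{DDEMM24}: one must verify that the contribution of $\beta$ is a lower-order perturbation in the Moser iteration and does not disturb the geometric rate of oscillation decay, so that a common H\"older exponent $\alpha_0$ survives.
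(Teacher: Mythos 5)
Your first part is essentially the paper's Theorem \ref{th2.2}: the same bilinear form, boundedness via the trace embedding of Theorem \ref{th2.1}, and coercivity via a Poincar\'e inequality through the set $E_0$ (the paper quotes the quantitative Lemma \ref{le2.6} from \cite{DDEMM24} rather than arguing by compactness, but that is immaterial). The overall architecture of your H\"older argument --- local boundedness with an $r^{\theta}$ gain, an oscillation-reduction inequality with an additive $Cr^{\theta}\|f\|_{L^p}$ error, the iteration lemma, interior estimates, and a covering argument --- also matches Proposition \ref{pro3.1} and its proof via Lemmas \ref{le3.1} and \ref{le3.2}.

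The one step where your plan has a genuine gap is the third one: deriving the oscillation reduction from ``a weak Harnack-type inequality'' at the boundary. For the Robin problem with $\beta$ merely as in \eqref{eq1.1}, the boundary Harnack inequality (Theorem \ref{th1.2}) is only available under the smallness hypothesis \eqref{eq1.7} on $\|\beta\|_{L^{q_0}(B(x_0,2Kr)\cap\partial\Omega)}$, and when $q_0=\infty$ this hypothesis fails at \emph{every} scale for, say, a constant $\beta>c_0$; the Robin condition is also not translation invariant, so $\sup u-u$ and $u-\inf u$ are not supersolutions of the same problem and the usual Harnack-to-oscillation mechanism does not apply verbatim. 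This is precisely why the paper does not follow the Harnack route of \cite[Theorem 4.5]{DDEMM24}. Instead it decomposes $u=\widetilde{u}+v$ on $B(x_0,r)\cap\Omega$, where $\widetilde{u}$ solves the local problem \eqref{eq3.1} with data $f$ and vanishing lateral trace (hence $\|\widetilde{u}\|_{L^\infty}\lesssim r^{\theta}\|f\|_{L^p}$ by Lemma \ref{le3.1}(i)), and $v$ satisfies the homogeneous local equation $B[v,\varphi]=0$; the pure oscillation decay \eqref{eq3.20} for $v$ is then proved by the De Giorgi level-set method of \cite{V12} (Caccioppoli plus a boundary density estimate, \eqref{eq3.21}--\eqref{eq3.22}), for which the sign $\beta\ge 0$ suffices and no smallness of $\beta$ is needed. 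To repair your plan you should either adopt this decomposition and level-set argument, or systematically measure $\beta$ in a subcritical norm $L^{q}$ with $q<\infty$ at or above $s/(s+2-n)$, so that $\|\beta\|_{L^{q}(B(x_0,2Kr)\cap\partial\Omega)}\to 0$ uniformly in $x_0$ as $r\to 0$ and the smallness needed for the Harnack step becomes automatic at small scales; as written, the weak-Harnack step is the point at which the proposal would break down.
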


\begin{remark}
Let $(\Omega,\sigma)$ and $\beta$ be as in Theorem \ref{th1.1}, and let $u\in W^{1,2}(\Omega)$ be the weak solution
to the Robin problem \eqref{Robin-Pro} with data $f$.
\begin{itemize}
\item[{\rm(i)}] David et al. \cite[Theorem 4.1]{DDEMM24} proved that $u\in C^{0,\alpha}(\overline{\Omega})$
for some $\alpha\in(0,1)$, under the assumptions that $\beta\equiv c$ with $c\in(0,\infty)$ being a given constant
and $f\in C^{0,\alpha}(\partial\Omega)$.

Moreover, when $\Omega$ is a bounded Lipschitz domain in $\mathbb{R}^n$ and $\beta\in L^\infty(\partial\Omega,\sigma)$,
Nittka \cite[Theorem 3.14]{n11} proved that $u\in C^{0,\alpha}(\overline{\Omega})$ for some $\alpha\in(0,1)$ and also obtained
the estimate \eqref{eq1.5}.

Thus, the global H\"older regularity for the Robin problem \eqref{Robin-Pro} established in Theorem \ref{th1.1} extends
the corresponding results obtained by David et al. \cite[Theorem 4.1]{DDEMM24} and Nittka \cite[Theorem 3.14]{n11}
via weakening, respectively, the assumptions on both the function $\beta$ and the data $f$ and the assumptions on both the domain $\Omega$ and
the function $\beta$.

\item[{\rm(ii)}] The method used in this article to prove Theorem \ref{th1.1} is different from that used by David et al. \cite[Theorem 4.1]{DDEMM24}
and Nittka \cite[Theorem 3.14]{n11}. Precisely, by first establishing a boundary Harnack inequality (see \cite[Theorem 4.4]{DDEMM24})
and then obtaining oscillation estimates (see \cite[Theorem 4.5]{DDEMM24}) for weak solutions to the local version of the Robin problem \eqref{Robin-Pro},
David et al. \cite[Theorem 4.1]{DDEMM24} proved $u\in C^{0,\alpha}(\overline{\Omega})$ for some $\alpha\in(0,1)$.
Moreover, Nittka \cite[Theorem 3.14]{n11} showed $u\in C^{0,\alpha}(\overline{\Omega})$ by applying a reflection technique
related to Lipschitz domains. However, the methods used in the proofs of \cite[Theorem 4.1]{DDEMM24} and \cite[Theorem 3.14]{n11}
are not valid in the setting of Theorem \ref{th1.1}. Indeed, since a part of the boundary of the domain $\Omega$ as in Theorem \ref{th1.1}
may not be a graph of a function, the reflection technique used in the proof of \cite[Theorem 3.14]{n11} is not applicable
in the setting of Theorem \ref{th1.1}. Moreover, although we can establish a boundary Harnack inequality (see Theorem \ref{th1.2})
similar to \cite[Theorem 4.4]{DDEMM24} in the setting of Theorem \ref{th1.1}, we cannot obtain an oscillation estimate as in
\cite[Theorem 4.5]{DDEMM24} for weak solutions to the local Robin problem, due to the generality of the function $\beta$ considered
in Theorem \ref{th1.1}.

To overcome these difficulties as above, we borrow some ideas from V\'elez-Santiago \cite{V12} and Daners \cite{D00}.
Precisely, by applying the method used in the proofs of \cite[Theorem 4.1]{D00} and \cite[Theorem 3.1]{V12}, in Lemma \ref{le3.1}
we first prove that the weak solution $u$ to the Robin problem \eqref{Robin-Pro} with data $f$ is bounded on $\overline{\Omega}$. Furthermore,
in Lemma \ref{le3.2} we obtain some oscillation estimates for weak solutions to the local Robin problem with the homogeneous boundary condition
by using a level set method used in the proof of \cite[Lemma 4.1]{V12}. Then, by using the boundedness of the weak solution $u$
obtained in Lemma \ref{le3.1} and the oscillation estimate obtained in Lemma \ref{le3.2}, we prove Theorem \ref{th1.1}.
\end{itemize}
\end{remark}

It is well known that the Harnack inequality is a fundamental and powerful tool in PDEs (see, for example, \cite{GT83,K94}).
We obtain the boundary Harnack inequality for the Robin problem \eqref{Robin-Pro} as follows.
	
\begin{theorem}\label{th1.2}
Let $n\ge 2,\ s\in (n-2,n)$, and $(\Omega,\sigma)$ satisfy Assumption \ref{Ass} with $\Omega$ being bounded.
Assume that $\beta$ satisfies \eqref{eq1.1}, $K\in(1,\infty)$ is a sufficiently large constant depending only
on the geometric constants for $(\Omega,\sigma)$, $x_0\in\partial\Omega$, and $r\in(0,\mathrm{diam\,}(\Omega)/(4K)]$.
Let $u\in W^{1,2}(B(x_0,2K^2r)\cap\Omega)$ be a non-negative weak solution to the Robin problem
\begin{equation}\label{eq1.6}
\begin{cases}
-\operatorname{div} (A\nabla u) = 0&\text{in}~~B(x_0,2K^2r),\\
A\nabla u\cdot \boldsymbol{\nu} +\beta u =0&\text{on}~~B(x_0,2K^2r)\cap\partial\Omega.
\end{cases}
\end{equation}
Then there exists constants $c_0,\eta\in(0,1)$ small enough depending only on the geometric constants for $(\Omega,\sigma)$
and $\mu_0$ in \eqref{eq1.2} such that, if
\begin{equation}\label{eq1.7}
\|\beta\|_{L^{q_0}(B(x_0,2Kr)\cap\partial\Omega)}\le c_0
\end{equation}
with $q_0\in(1,\infty]$ as in \eqref{eq1.1}, then
\begin{equation}\label{eq1.8}
\inf\limits_{x\in B(x_0,r)\cap\Omega}u(x) \ge\eta\sup\limits_{x\in B(x_0,r)\cap\Omega} u(x).
\end{equation}
\end{theorem}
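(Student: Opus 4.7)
My plan is to prove \eqref{eq1.8} by a Moser-type iteration carried out up to the boundary, treating the Robin condition as a perturbation of the Neumann condition whose contribution can be absorbed once the smallness assumption \eqref{eq1.7} is used. The argument proceeds in three stages that parallel the classical Moser proof of the interior Harnack inequality, with each stage requiring a careful handling of the boundary integral generated by the Robin term.

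First, I would derive a Caccioppoli-type inequality up to the boundary. For a smooth cut-off $\zeta$ supported in $B(x_0,2Kr)$, test the weak formulation of \eqref{eq1.6} with $\varphi=\zeta^2(u+\varepsilon)^{2\gamma-1}$ ($\varepsilon>0$ small, $\gamma\ne 0$). After applying the ellipticity \eqref{eq1.2} one obtains
\begin{equation*}
\int_{\Omega}\zeta^2|\nabla(u+\varepsilon)^{\gamma}|^2\,dx
\le C\int_{\Omega}|\nabla\zeta|^2 (u+\varepsilon)^{2\gamma}\,dx
-\frac{2\gamma-1}{\gamma^2}\int_{\partial\Omega}\beta\,\zeta^2(u+\varepsilon)^{2\gamma}\,d\sigma.
\end{equation*}
For $\gamma>1/2$ the boundary term is non-positive and is dropped; for $\gamma<0$ it is non-negative, and I would control it by H\"older on $\partial\Omega$ (pairing $\beta$ in $L^{q_0}$ against $\zeta^2(u+\varepsilon)^{2\gamma}$ in $L^{q_0'}$), followed by a trace-Sobolev embedding of $W^{1,2}(\Omega\cap B(x_0,2Kr))$ into an appropriate $L^t(\partial\Omega\cap B(x_0,2Kr),\sigma)$ that is available on $(\Omega,\sigma)$ under Assumption \ref{Ass}. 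The exponent match is exactly the admissibility condition $q_0>s/(s+2-n)$ of \eqref{eq1.1}, and the smallness hypothesis \eqref{eq1.7} permits the resulting gradient term to be absorbed into the left-hand side.

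Second, I would combine this Caccioppoli bound with the Sobolev embedding $W^{1,2}(\Omega\cap B)\hookrightarrow L^{2^{*}}(\Omega\cap B)$ (obtainable on the NTA domain via a Whitney-extension argument) and iterate over a telescoping sequence of balls shrinking to $B(x_0,r)$. Letting $\varepsilon\to 0$ produces, for some $p_0>0$,
\begin{equation*}
\sup_{B(x_0,r)\cap\Omega}u\le C\Big(\fint_{B(x_0,2r)\cap\Omega}u^{p_0}\,dx\Big)^{1/p_0},\qquad
\inf_{B(x_0,r)\cap\Omega}u\ge c\Big(\fint_{B(x_0,2r)\cap\Omega}u^{-p_0}\,dx\Big)^{-1/p_0}.
\end{equation*}
To bridge positive and negative powers, I would next test \eqref{eq1.6} with $\varphi=\zeta^2(u+\varepsilon)^{-1}$ to get a logarithmic Caccioppoli bound $\int\zeta^2|\nabla\log(u+\varepsilon)|^2\,dx\le C$ (again using \eqref{eq1.7} to handle the boundary contribution). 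A Poincar\'e inequality on $B(x_0,2r)\cap\Omega$ then places $\log u$ in a BMO-type class with uniform norm, and a John--Nirenberg argument yields
\begin{equation*}
\fint_{B(x_0,2r)\cap\Omega}u^{p_0}\,dx\cdot\fint_{B(x_0,2r)\cap\Omega}u^{-p_0}\,dx\le C
\end{equation*}
for some small $p_0>0$. Inserting this crossover into the two bounds of the previous stage produces \eqref{eq1.8} with the Harnack constant $\eta:=1/C$.

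The main obstacle is the negative-power Caccioppoli in the first stage: the boundary Robin term $\int\beta\,\zeta^2(u+\varepsilon)^{2\gamma}\,d\sigma$ is not of lower order relative to the interior Dirichlet energy, so absorbing it into the gradient term requires both a sharp trace-Sobolev embedding adapted to the $s$-Ahlfors regular boundary (which is where the condition $q_0>s/(s+2-n)$ enters in a nontrivial way) and the smallness \eqref{eq1.7} with $c_0$ chosen small enough depending only on $\mu_0$ and the geometric constants for $(\Omega,\sigma)$. Once this technical input is secured, the remainder of the Moser iteration and the log-BMO crossover are essentially classical, and the NTA structure of $\Omega$ enters only through the availability of the trace and Sobolev embeddings and through the corkscrew/Harnack-chain geometry needed to patch the estimates on $B(x_0,r)\cap\Omega$.
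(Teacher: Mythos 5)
Your strategy founders exactly at the point you flag as the main obstacle: the negative-power Caccioppoli inequality. Testing the weak formulation of \eqref{eq1.6} with $\varphi=\zeta^2(u+\varepsilon)^{2\gamma-1}$ for $\gamma<0$ and writing $v:=(u+\varepsilon)^{\gamma}$ gives
\begin{equation*}
\frac{1+2|\gamma|}{\gamma^{2}}\int_{\Omega}\zeta^{2}A\nabla v\cdot\nabla v\,dx
=\frac{2}{|\gamma|}\int_{\Omega}\zeta v\,A\nabla v\cdot\nabla\zeta\,dx
+\int_{\partial\Omega}\beta\,\frac{u}{u+\varepsilon}\,\zeta^{2}v^{2}\,d\sigma .
\end{equation*}
The coercive term on the left carries the factor $(1+2|\gamma|)/\gamma^{2}\sim 2/|\gamma|$, whereas the boundary term on the right carries no factor of $\gamma$ at all. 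After H\"older and the trace--Sobolev embedding that boundary term is bounded by $c_0C_{\mathrm{tr}}\|\nabla(\zeta v)\|_{L^2}^2$ plus lower-order terms, so absorbing it into the left-hand side forces $c_0\lesssim\mu_0/|\gamma|$. Since the Moser iteration for the infimum sends $|\gamma|\to\infty$, no fixed $c_0$ works: the absorption degenerates after finitely many steps. (The positive-power iteration and the logarithmic estimate are fine, since there the Robin term either has a favorable sign or contributes only $\int\beta\zeta^2\,d\sigma\lesssim c_0r^{n-2}$; the failure is confined to, but fatal for, the $\inf$ half of the Harnack inequality.) Moreover, at the endpoint $q_0=s/(s+2-n)$ admitted by \eqref{eq1.1} there is no subcritical gain in $r$ to trade against the growth in $|\gamma|$, so the gap cannot be repaired by tracking polynomially growing iteration constants either.

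The paper avoids this by never iterating negative powers against the Robin term. It writes $u=h+(u-h)$ on a tent domain $T(x_0,2Kr)$ (Lemma \ref{le2.8}), where $h$ solves the mixed problem of Lemma \ref{le2.7} with Neumann data $-\beta u$ on $\partial\Omega$ and vanishing trace on $\partial T(x_0,2Kr)\cap\Omega$. A sup-type Moser bound (Lemmas \ref{le4.1} and \ref{le4.2}), which uses only positive powers together with the additive constant $k\sim\|\tau\|_{L^{q_0}}$ to neutralize the boundary term, yields $\|h\|_{L^\infty(B(x_0,r)\cap\Omega)}\le Cc_0\sup_{B(x_0,2Kr)\cap\Omega}u$. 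Then $u-h$ satisfies a pure Neumann equation, to which the known Neumann boundary Harnack inequality (Lemma \ref{le4.4}, i.e.\ \cite[Theorem 3.1]{DDEMM24}) applies, and the $O(c_0\sup u)$ error is absorbed only at the very end, using $\sup_{B(x_0,r)\cap\Omega}u\ge C_4\sup_{B(x_0,2Kr)\cap\Omega}u$ from Lemma \ref{le4.3} and Harnack chains. If you wish to keep a direct iteration, you would need an analogous device that removes the Robin contribution before taking negative powers; the trace embedding plus the smallness \eqref{eq1.7} alone do not suffice.
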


\begin{remark}
When $\beta\equiv c$ with $c\in(0,\infty)$ being a given constant, the conclusion of Theorem \ref{th1.2} was obtained
in \cite[Theorem 4.4]{DDEMM24}. We also point out that, when $n\ge3$ and $\beta\equiv c$, by taking $q_0=s/(s+2-n)$ in \eqref{eq1.7},
we find that the assumption \eqref{eq1.7} on $\beta$ and the condition \cite[(4.15)]{DDEMM24} on $\beta$ in
\cite[Theorem 4.4]{DDEMM24} [namely $cr^{2-n}\sigma(B(x_0,r))\le c_0$] are consistent.

We prove Theorem \ref{th1.2} by applying some methods similar to those used in the proof of \cite[Theorem 4.4]{DDEMM24}.
Compared with the proof of \cite[Theorem 4.4]{DDEMM24}, the main new ingredients
appearing in the proof of Theorem \ref{th1.2} are the ingenious application of several Poincar\'e type inequalities
established in \cite{DFM23} and \cite{DDEMM24}.
\end{remark}
	
Moreover, Green's functions are a natural and fundamental tool to study weak solutions
to various boundary value problems and hence have attracted extensive attention in the field of PDEs
(see, for example, \cite{CK14,DDEFMM,DFM23,dk21,DK09,FL24,gw82,hs25}). In particular, Choi and Kim \cite{CK14}
have studied Green's functions for second-order elliptic and parabolic systems in divergence form with Robin-type boundary conditions
on rough domains. Inspired by the work of Choi and Kim \cite{CK14}, to study the harmonic measure related to the Robin problem
\eqref{Robin-Pro} via using the theory of Green's functions we study the existence and regularity of Green's function associated  with
the Robin problem \eqref{Robin-Pro}. Let us state this result as follows.
	
\begin{theorem}\label{th1.3}
Let $n\ge 2$, $s\in (n-2,n)$, and $(\Omega, \sigma)$ satisfy Assumption \ref{Ass} with $\Omega$ being bounded.
Assume that the function $\beta$ satisfies \eqref{eq1.1}. Then, for any $y\in \Omega$, there exists
a non-negative function $G_R(\cdot,y)$ on $\Omega$ such that, for any $r\in(0,\mathrm{diam\,}(\Omega))$,
\begin{equation*}
G_R(\cdot,y)\in W^{1,2}(\Omega\setminus B(y,r))\cap W^{1,1}(\Omega)
\end{equation*}
and, for any $\varphi\in C^{\infty}_{\rm c}(\mathbb{R}^n)$ and $y\in\Omega$,
\begin{equation}\label{eq1.9}
\int_{\Omega} A\nabla G_R(x,y) \cdot \nabla \varphi(x)\,dx + \int_{\partial \Omega}\beta(x)G_R(x,y)\varphi(x)\,d\sigma(x) = \varphi(y),
\end{equation}
where the function $G_R$ on $\Omega\times\Omega$ is called {\rm Green's function} for the Robin problem \eqref{Robin-Pro}. Moreover, if $u$ is the
weak solution for the Robin problem \eqref{Robin-Pro} with data $f\in L^2(\partial\Omega,\sigma)$, then, for any $x\in \Omega$,
\begin{equation}\label{eq1.10}
u(x)=\int_{\partial \Omega}G_R(y,x)f(y)\,d\sigma(y).
\end{equation}
Furthermore, Green's function $G_R$ has the following two properties:
\begin{itemize}	
\item[{\rm(i)}] For any $x,y\in \Omega$ with $x\ne y$,
\begin{equation}\label{eq1.11}
0 \le G_R(x,y) \le \begin{cases}
C\left[ 1 + \log\left(\dfrac{1}{|x-y|} \right) \right]&\text{if }n=2,\\
\dfrac{C}{|x-y|^{n-2}}&\text{if }n\ge 3,
\end{cases}
\end{equation}
where the positive constant $C$ depends only on $n$, the geometric
constants for $(\Omega,\sigma)$, and $\mu_0$ in \eqref{eq1.2}.
		
\item[{\rm(ii)}] Let $n\ge 3$. Then there exists $\delta\in (0,1]$
depending only on $n$, the geometric constants for $(\Omega,\sigma)$,
and $\mu_0$ in \eqref{eq1.2} such that, for any $x_1,x_2,y\in \Omega$ satisfying $|x_1-x_2|<|x_1-y|/2$,
\begin{equation}\label{eq1.12}
|G_R(x_1,y) - G_R(x_2,y)|\le C\dfrac{|x_1-x_2|^\delta}{|x_1-y|^{n-2+\delta}},
\end{equation}
where the positive constant $C$ depends only on $n$, the geometric
constants for $(\Omega,\sigma)$, and $\mu_0$ in \eqref{eq1.2}.
\end{itemize}
\end{theorem}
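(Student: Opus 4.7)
The plan is to adapt the Gr\"uter--Widman approximation scheme to the Robin setting, in the spirit of Choi and Kim \cite{CK14}, but now in the rough NTA framework with a variable weight $\beta\in L^{q_0}(\partial\Omega,\sigma)$. For each fixed $y\in\Omega$ and small $\rho\in(0,\delta(y)/2)$, I would first solve by Lax--Milgram the approximate problem
$$a(G_R^\rho(\cdot,y),\varphi)=\fint_{B(y,\rho)}\varphi\,dz,\qquad\forall\,\varphi\in W^{1,2}(\Omega),$$
where $a(u,v):=\int_\Omega A\nabla u\cdot\nabla v\,dx+\int_{\partial\Omega}\beta\mathrm{Tr}(u)\mathrm{Tr}(v)\,d\sigma$. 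Coercivity of $a$ on $W^{1,2}(\Omega)$ is the same functional-analytic input that underlies the existence half of Theorem \ref{th1.1}: a Poincar\'e--Robin inequality together with $\beta\ge a_0$ on $E_0$ with $\sigma(E_0)>0$. Hence $G_R^\rho(\cdot,y)$ exists and is unique.

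The core technical step is to establish, uniformly in $\rho$, the pointwise bound \eqref{eq1.11} for $G_R^\rho$ on $\{|x-y|\ge 2\rho\}$. I would do this by a De Giorgi/Moser iteration with cutoff test functions, in which the boundary contribution $\int\beta|\mathrm{Tr}(G_R^\rho)|^2\,d\sigma$ is absorbed into the Dirichlet part by combining the trace inequality of \cite{DFM23} with H\"older's inequality; the threshold $q_0>s/(s+2-n)$ is precisely what produces a small factor at each scale. I expect this iteration to be the main obstacle, because it must simultaneously handle (i) a rough-boundary Sobolev embedding (supplied by the NTA and $s$-Ahlfors regularity), (ii) the merely $L^{q_0}$ weight $\beta$, and (iii) uniformity in $\rho$.

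With such bounds in place, the interior/boundary H\"older regularity built into the proof of Theorem \ref{th1.1}, applied on annuli centered at $y$ (where $G_R^\rho$ solves the homogeneous Robin problem), yields equicontinuity of $\{G_R^\rho(\cdot,y)\}_{\rho>0}$ on compact subsets of $\overline{\Omega}\setminus\{y\}$. A subsequence therefore converges uniformly to a function $G_R(\cdot,y)$ satisfying \eqref{eq1.11}; its $W^{1,1}(\Omega)$ membership follows by integrating $|\nabla G_R^\rho|$ over level sets $\{G_R^\rho>t\}$ as in \cite{gw82}, and passing to the limit in the approximate weak formulation yields \eqref{eq1.9}. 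The representation \eqref{eq1.10} is then obtained by a symmetrization argument: one shows that $G_R(x,y)=G_R^{*}(y,x)$, where $G_R^{*}$ is the Green's function of the adjoint problem, and then tests \eqref{eq1.9} against a mollification of the weak solution $u$ to \eqref{Robin-Pro} and passes to the limit.

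Finally, to prove \eqref{eq1.12}, fix $y\in\Omega$ and $x_1\in\Omega$ with $r:=|x_1-y|/2$; on $B(x_1,r)\cap\Omega$ the function $G_R(\cdot,y)$ is a non-negative weak solution of the homogeneous Robin problem. The H\"older regularity of Theorem \ref{th1.1} (combined, when $x_1$ lies near $\partial\Omega$, with the oscillation decay underlying Theorem \ref{th1.2}), rescaled to this ball, produces
$$|G_R(x_1,y)-G_R(x_2,y)|\le C\left(\frac{|x_1-x_2|}{r}\right)^{\delta}\sup_{B(x_1,r)\cap\Omega}G_R(\cdot,y),$$
and the supremum is controlled by $Cr^{2-n}$ via \eqref{eq1.11}, yielding \eqref{eq1.12}.
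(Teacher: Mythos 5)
Your overall architecture (Lax--Milgram for the mollified problems, uniform pointwise bounds, equicontinuity on compacta of $\overline{\Omega}\setminus\{y\}$, level-set estimates for $W^{1,1}$, passage to the limit for \eqref{eq1.9}, duality/symmetrization for \eqref{eq1.10}, and rescaled oscillation decay for \eqref{eq1.12}) matches the paper's for everything except the key bound \eqref{eq1.11}, where the routes genuinely diverge. You propose the classical elliptic route of Gr\"uter--Widman: a De Giorgi/Moser iteration on $G_R^\rho(\cdot,y)$ away from $y$, uniform in $\rho$. The paper instead imports Gaussian upper bounds for the Robin heat kernel from Choi--Kim, writes $G_R(x,y)=\int_0^\infty p_{t,L_R}(x,y)\,dt$, and estimates the three time regimes separately. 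The parabolic route buys two things your sketch does not address: (a) the logarithmic bound in $n=2$, which falls out of the $\min\{t^{n/2},R_0^n\}$ normalization of the heat kernel, whereas the Gr\"uter--Widman capacity/level-set argument you invoke is tailored to $n\ge3$ and would need a separate two-dimensional argument; and (b) it offloads the uniform-in-$\rho$ issue entirely. Your route is more self-contained but, as stated, is missing the actual mechanism that converts local boundedness into the decay rate $|x-y|^{2-n}$ (in Gr\"uter--Widman this is the measure estimate $|\{G^\rho(\cdot,y)>t\}|\lesssim t^{-n/(n-2)}$ obtained by testing with truncations, not the Moser iteration itself).

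Two smaller points. First, your claim that the threshold $q_0>s/(s+2-n)$ is needed to ``produce a small factor'' absorbing $\int_{\partial\Omega}\beta|\mathrm{Tr}(G_R^\rho)|^2\,d\sigma$ misreads the sign of that term: for the \emph{upper} bound \eqref{eq1.11} the $\beta$-term is non-negative and is simply discarded in every energy/Caccioppoli inequality, which is exactly why the constant in \eqref{eq1.11} does not depend on $\beta$ at all; no smallness of $\|\beta\|_{L^{q_0}}$ is available or required there. Second, in your proof of \eqref{eq1.12} the dilation needed to run the boundary oscillation estimate (a ball $B(x_1,CK_0 r)$, or the tent domain $T(x_0,3R/2)$) may swallow the pole $y$ when $|x_1-x_2|$ is comparable to $|x_1-y|$; the paper handles this by an extra trivial case using \eqref{eq1.11} and the triangle inequality, and you should do the same.
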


We prove the existence of Green's function $G_R$ by following the standard approach
developed by Gr\"uter and Widman \cite{gw82} and obtain the representation formula
\eqref{eq1.10} by applying the method used in the proof of \cite[(5.4)]{DDEMM24}. Moreover,
we obtain the upper bound estimate \eqref{eq1.11} by using Gaussian upper bound estimates for
heat kernels associated with the Robin problem \eqref{Robin-Pro} essentially established by Choi and Kim \cite{CK14}.
Furthermore, we prove the H\"older regularity \eqref{eq1.12} of $G_R$ by applying
\eqref{eq1.11} and the interior H\"older estimate for weak solutions to the local Robin problem
(see, for instance, \cite[Lemma 11.30]{DFM23}). We also point out that, when $\beta\equiv c$ with $c\in(0,\infty)$
being a given constant, the existence of $G_R$ and \eqref{eq1.10} were obtained in \cite[Theorem 5.6]{DDEMM24}.
	
\begin{remark}
By \eqref{eq1.9}, it is easy to find that $G_R^*(x,y) = G_R(y,x)$ for any $x,y\in\Omega$, where $G_R^*$ denotes Green's function
for the Robin problem
$$ \begin{cases}
-\mathrm{div} (A^T\nabla u) = 0&\quad \text{in}~\Omega,\\
A^T\nabla u\cdot \boldsymbol{\nu} + \beta u = f&\quad \text{on}~\partial \Omega,
\end{cases} $$
where $A^T$ denotes the transport matrix of $A$. Therefore, the same conclusion as in Theorem \ref{th1.3} also holds for $G_R^*$.
By this, we further conclude the H\"older continuity of $G_R(\cdot,\cdot)$ with respect to the second variable.
\end{remark}
	
As applications of the global H\"older regularity in Theorem \ref{th1.1}, the boundary Harnack inequality in
Theorem \ref{th1.2}, and the representation formula \eqref{eq1.10} in Theorem \ref{th1.3},
we further prove the existence of harmonic measures induced by the
Robin problem \eqref{Robin-Pro} and the mutual absolute continuity between the harmonic measure and
the surface measure $\sigma$ (see Theorems \ref{th1.4} and \ref{th1.5}).

The study of the relationship between geometric properties of a domain $\Omega$ and the absolute continuity of the
harmonic measure associated with the Dirichlet problem on $\Omega$ with respect to
the surface measure on the boundary $\partial\Omega$ has a long history. In particular, F. Riesz and M. Riesz \cite{RR20} showed
that, for a simply connected domain $\Omega$ in the complex plane with a rectifiable
boundary $\partial\Omega$, the harmonic measure is absolutely continuous
with respect to the arclength measure on $\partial\Omega$. Dahlberg \cite{D77} proved the mutual absolute continuity between the harmonic measure
and the surface measure for any Lipschitz domains in $\mathbb{R}^n$ with $n\ge2$. David and Jerison \cite{dj90} and Semmes
\cite{s89} independently extended the result of Dahlberg \cite{D77} to the case of chord-arc domains in $\mathbb{R}^n$ with $n\ge2$.
Recently, Azzam et al. \cite{AHMMT20} obtained a complete geometric characterization for the mutual absolute continuity between
the harmonic measure and the surface measure on the boundary of a rough domain $\Omega$ in $\mathbb{R}^n$. We also
refer to \cite{AGMT23,AHMMMTV16,DM23,HM14,HMM16} for more recent progress on the absolute continuity of the
harmonic measure for the Dirichlet problem with respect to the surface measure.
	
Furthermore, David et al. \cite{DDEMM24} creatively developed the theory of the harmonic measure associated with
the Robin problem \eqref{Robin-Pro} with $\beta\equiv c$ on the domain $\Omega$ satisfying Assumption \ref{Ass}
and established the qualitative mutual absolute continuity between such harmonic measure and the surface measure on $\partial\Omega$.

To describe our results for the harmonic measure associated with the Robin problem \eqref{Robin-Pro}, we first
recall the concept of absolutely continuity between two measures.
Given two Borel measures $\mu_1,\mu_2$ defined on a set $\Sigma$, the measure $\mu_1$
is said to be \emph{absolutely continuous} with respect to $\mu_2$,
denoted by $\mu_1 \ll \mu_2$, if $\mu_2(E) = 0$ implies $\mu_1(E) = 0$ for any Borel measurable subset $E\subset\Sigma$.
	
\begin{theorem}\label{th1.4}
Let $n\ge 2$, $s\in (n-2,n)$, and $(\Omega, \sigma)$ satisfy Assumption \ref{Ass} with $\Omega$ being bounded.
Assume that $\beta$ satisfies \eqref{eq1.1} and $\beta \ge a_0$ on $\partial \Omega$ with $a_0$ being a given positive constant.
For any Robin boundary value $f\in C(\partial\Omega)$, let $u_f$ be the unique weak solution to the Robin problem \eqref{Robin-Pro}.
Then there exists a family of Radon measures
$\{w_{R}^x \}_{x\in \Omega}$, supported in $\partial \Omega$, such that, for any $x\in\Omega$,
\begin{equation}\label{eq1.13}
u_f(x) = \int_{\partial \Omega} f(z)~dw_{R}^x(z).
\end{equation}
The measure $w^x_{R}$ is called the {\rm Robin harmonic measure} associated with the Robin problem \eqref{Robin-Pro}.
		
Furthermore, for any $x\in\Omega$,
\begin{equation}\label{eq1.14}
\sigma \ll w^x_{R} \ll \sigma
\end{equation}
and
\begin{equation}\label{eq1.15}
\dfrac{dw_R^x}{d\sigma} = G_R(\cdot,x).
\end{equation}
\end{theorem}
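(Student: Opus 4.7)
The plan is to define the Robin harmonic measure directly through Green's function by setting $dw_R^x := G_R(\cdot, x)\,d\sigma$ for each $x\in\Omega$, where $G_R$ is the Green's function constructed in Theorem \ref{th1.3}. With this choice, the density identity \eqref{eq1.15} and the inclusion $w_R^x \ll \sigma$ are immediate by construction. It then remains to verify three things: that $w_R^x$ is a finite non-negative Radon measure supported in $\partial\Omega$, that the representation \eqref{eq1.13} is valid for every $f\in C(\partial\Omega)$, and that the reverse inclusion $\sigma \ll w_R^x$ holds.

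I would first check that $G_R(\cdot, x)$ is $\sigma$-integrable so that $w_R^x$ is a finite non-negative Radon measure. The key is the pointwise upper bound \eqref{eq1.11} of Theorem \ref{th1.3}(i): for $n\ge 3$, a dyadic annular decomposition of $\partial\Omega$ around $x$ combined with Ahlfors regularity and $s > n-2$ makes $\int_{\partial\Omega}|y-x|^{-(n-2)}\,d\sigma(y)$ convergent, and the case $n=2$ is easier, using local integrability of $\log(1/|y-x|)$ against the Ahlfors regular $\sigma$ together with $\sigma(\partial\Omega) < \infty$. Once this is in hand, the representation \eqref{eq1.13} follows at once from \eqref{eq1.10}, since the boundedness of $\Omega$ yields $C(\partial\Omega)\subset L^\infty(\partial\Omega,\sigma)\subset L^2(\partial\Omega,\sigma)$.

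The main obstacle is the remaining inclusion $\sigma \ll w_R^x$, which reduces to showing that $G_R(\cdot, x) > 0$ $\sigma$-almost everywhere on $\partial\Omega$. I would in fact establish pointwise positivity at every boundary point by combining interior Harnack with the boundary Harnack inequality of Theorem \ref{th1.2}. Interior positivity $G_R(y, x) > 0$ for every $y\in\Omega\setminus\{x\}$ follows from the Harnack chain condition (A3) and the classical interior Harnack inequality applied to the non-negative solution $G_R(\cdot, x)$ of $-\mathrm{div}(A\nabla\cdot) = 0$ in $\Omega\setminus\{x\}$, with non-triviality supplied by \eqref{eq1.9} tested against a bump supported near $x$. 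To propagate positivity to the boundary, fix $z\in\partial\Omega$; since $x\in\Omega$ one has $|x-z|>0$, and by absolute continuity of the integral of $|\beta|^{q_0}$ (choosing a finite $q_0$ in the borderline case $q_0=\infty$, permissible since $\sigma(\partial\Omega)<\infty$) one can pick $r\in(0,\mathrm{diam\,}(\Omega)/(4K)]$ small enough that $|x-z|>2K^2 r$ and the smallness condition \eqref{eq1.7} holds on $B(z,2Kr)\cap\partial\Omega$. On $B(z,2K^2 r)$ the function $G_R(\cdot,x)$ is a non-negative weak solution of the homogeneous local Robin problem \eqref{eq1.6}, so Theorem \ref{th1.2} gives
\begin{equation*}
\inf_{y\in B(z,r)\cap\Omega} G_R(y,x)\ge\eta\sup_{y\in B(z,r)\cap\Omega} G_R(y,x).
\end{equation*}
The interior corkscrew condition (A2) provides a point $y_0\in B(z,r)\cap\Omega$ at which $G_R(y_0,x)>0$ by interior positivity, so the supremum and hence the infimum above are strictly positive. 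Applying Theorem \ref{th1.1} (or a local analog) to $G_R(\cdot,x)$ on a neighborhood of $z$ avoiding $x$ gives boundary Hölder continuity, so the trace of $G_R(\cdot,x)$ at $z$ is bounded below by this positive infimum. Since $z\in\partial\Omega$ was arbitrary, $G_R(\cdot,x)>0$ on $\partial\Omega$, yielding $\sigma\ll w_R^x$. The hypothesis $\beta\ge a_0>0$ on all of $\partial\Omega$ plays its role here, ensuring the Robin condition is uniformly non-degenerate so that the boundary Harnack propagation works at every boundary point.
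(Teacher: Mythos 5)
Your proposal is correct, but it reaches the conclusion by a genuinely different route from the paper's for the construction step. The paper first builds $w_R^x$ abstractly: Proposition \ref{pro6.1} applies the Riesz representation theorem to the bounded linear functional $f\mapsto u_f(x)$ (boundedness coming from the maximum principle in Lemma \ref{le6.2}), and only afterwards identifies the density by showing in Proposition \ref{pro6.2}, via an approximation of ${\bf 1}_E$ by continuous $f_i$ with $u_{f_i}\to u$ locally uniformly and weakly in $W^{1,2}(\Omega)$, that $E\mapsto w_R^x(E)$ solves the Robin problem with data ${\bf 1}_E$; combining this with \eqref{eq1.10} yields $w_R^x(E)=\int_E G_R(y,x)\,d\sigma(y)$ and hence \eqref{eq1.15}. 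You instead \emph{define} $w_R^x:=G_R(\cdot,x)\,d\sigma$ and read off \eqref{eq1.13} directly from \eqref{eq1.10}, which bypasses both the Riesz representation theorem and the approximation argument of Proposition \ref{pro6.2}; since the theorem only asserts existence of a representing family, this shortcut is legitimate, and your integrability check is even easier than you suggest because $|y-x|\ge\delta(x)>0$ for $y\in\partial\Omega$, so $G_R(\cdot,x)$ is bounded on $\partial\Omega$ by \eqref{eq1.11}. What the paper's longer route buys is a construction of the harmonic measure that does not presuppose the Green function, plus the intermediate fact (of independent interest) that $x\mapsto w_R^x(E)$ is itself a weak solution. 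For the mutual absolute continuity, both arguments rest on the same mechanism — the boundary Harnack inequality of Theorem \ref{th1.2} applied to $G_R(\cdot,x)$ — but you establish strict positivity of $G_R(\cdot,x)$ at every boundary point (interior positivity from connectivity, Harnack, and nontriviality via \eqref{eq1.9}, then propagation to the boundary), whereas the paper argues by contradiction that $G_R(\cdot,x)$ cannot vanish on a set of positive $\sigma$-measure without vanishing identically, contradicting \eqref{eq1.9} with $\varphi\equiv1$. You are in fact more careful than the paper on two points: arranging the smallness condition \eqref{eq1.7} at small scales (passing to a finite exponent when $q_0=\infty$), and invoking a local Hölder estimate up to the boundary (Lemmas \ref{le3.1} and \ref{le3.2}, rather than the global Theorem \ref{th1.1}, which does not literally apply to $G_R(\cdot,x)$ because of the pole) to pass from the interior infimum in \eqref{eq1.8} to the boundary value at $z$.
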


We prove Theorem \ref{th1.4} by applying the Riesz representation theorem, the maximum principle,
the H\"older regularity in Theorem \ref{th1.1}, the boundary Harnack inequality in Theorem \ref{th1.2},
and the representation formula \eqref{eq1.10}. It is worth pointing out that, when $\beta\equiv c$ with $c\in(0,\infty)$
being a given constant, Theorem \ref{th1.4} is precisely \cite[Theorem 1.2]{DDEMM24}.
Therefore, Theorem \ref{th1.4} extends \cite[Theorem 1.2]{DDEMM24} by weakening the assumption on $\beta$.

\begin{remark}
Let $(\Omega, \sigma)$, $\beta$ be the same as in Theorem \ref{th1.4}, and $x\in\Omega$.
By \eqref{eq1.15}, we find that $G(\cdot,x)$ is the Radon--Nikodym derivative of the
harmonic measure $w_{R}^x$ with respect to the surface measure $\sigma$
[also called the Poisson kernel for the Robin problem \eqref{Robin-Pro}].
\end{remark}
	
\begin{theorem}\label{th1.5}
Let $n\ge 2$, $s\in (n-2,n)$, and $(\Omega,\sigma)$ satisfy Assumption \ref{Ass} with $\Omega$ being bounded. Assume that $\beta$
satisfies \eqref{eq1.1} and $\beta \ge a_0$ on $\partial \Omega$ with $a_0$ being a given positive constant and $\{w_R^x\}_{x\in\Omega}$
are the corresponding harmonic measures as in Theorem \ref{th1.4}.
Then there exists a constant $C\in[1,\infty)$ depending on the geometric constants for $(\Omega,\sigma)$
and $\mu_0$ in \eqref{eq1.1} such that, for any $x_0\in \partial\Omega$ and $r\in(0,\mathrm{diam\,}(\Omega)/(4K)]$
satisfying $\|\beta\|_{L^{q_0}(B(x_0,2Kr)\cap\partial\Omega)}\le c_0$, where $c_0\in(0,1)$ and $K\in(1,\infty)$ are as in
Theorem \ref{th1.2}, for any $x\in\Omega\setminus B(x_0,Cr)$, and for any $E\subset B(x_0,r)\cap\partial\Omega$,
\begin{equation}\label{eq1.16}
C^{-1}\dfrac{\sigma(E)}{\sigma(B(x_0,r)\cap\partial\Omega)}\le\dfrac{w_R^x(E)}{w_R^x(B(x_0,r)\cap\partial\Omega)}
\le C\dfrac{\sigma(E)}{\sigma(B(x_0,r)\cap\partial\Omega)}.
\end{equation}
\end{theorem}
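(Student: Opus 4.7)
\medskip

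\noindent\textbf{Proof strategy.} The plan is to reduce \eqref{eq1.16} to a pointwise two-sided comparison of the kernel $G_R(\cdot,x)$ on $B(x_0,r)\cap\partial\Omega$, which will then be supplied by the boundary Harnack inequality of Theorem \ref{th1.2}. By the representation formula \eqref{eq1.15} of Theorem \ref{th1.4}, for any Borel set $F\subset\partial\Omega$,
\begin{equation*}
w_R^x(F)=\int_F G_R(y,x)\,d\sigma(y),
\end{equation*}
so it suffices to exhibit a number $M=M(x,x_0,r)\in(0,\infty)$ and a universal $\eta\in(0,1)$ such that $\eta M\le G_R(y,x)\le M$ for $\sigma$-almost every $y\in B(x_0,r)\cap\partial\Omega$. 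Dividing the resulting integral inequalities over $E$ and over $B(x_0,r)\cap\partial\Omega$ will immediately produce \eqref{eq1.16} with $C=\eta^{-1}$ (combined with the geometric constant from the separation step below).

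To set up the boundary Harnack inequality, I first choose $C\in(2K^2,\infty)$ depending only on the geometric constants of $(\Omega,\sigma)$, so that the hypothesis $x\in\Omega\setminus B(x_0,Cr)$ yields $x\notin\overline{B(x_0,2K^2r)}$. Using the distributional identity \eqref{eq1.9} with test functions supported in $B(x_0,2K^2r)$, one sees that $u:=G_R(\cdot,x)$ is a non-negative weak solution of the homogeneous local Robin problem \eqref{eq1.6} on $B(x_0,2K^2r)\cap\Omega$. The assumption $\|\beta\|_{L^{q_0}(B(x_0,2Kr)\cap\partial\Omega)}\le c_0$ is precisely the hypothesis \eqref{eq1.7} of Theorem \ref{th1.2}, so applying that theorem gives
\begin{equation*}
\inf\limits_{y\in B(x_0,r)\cap\Omega}G_R(y,x)\ge\eta\sup\limits_{y\in B(x_0,r)\cap\Omega}G_R(y,x).
\end{equation*}

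The remaining task is to transfer this interior double inequality to the boundary trace that actually appears in the integral against $\sigma$. Because $u$ solves a homogeneous local Robin problem with zero boundary data on $B(x_0,2K^2r)\cap\partial\Omega$, the local H\"older regularity machinery behind Theorem \ref{th1.1}, in particular the local oscillation estimate established in Lemma \ref{le3.2}, produces a H\"older continuous extension of $u$ to $\overline{\Omega}\cap B(x_0,2K^2r)$. At $\sigma$-almost every $y\in B(x_0,r)\cap\partial\Omega$, the non-tangential trace defined by \eqref{eq1.4} coincides with the continuous boundary value; hence the interior inequality above passes to the boundary, giving the desired pointwise estimate with $M:=\sup_{B(x_0,r)\cap\Omega}G_R(\cdot,x)$ (which is finite by the same H\"older extension and \eqref{eq1.11}).

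\medskip

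\noindent\textbf{Main obstacle.} The delicate point is precisely this passage from the interior comparability furnished by Theorem \ref{th1.2} to a pointwise comparability on the boundary trace; neither the global estimate of Theorem \ref{th1.1} (since $G_R(\cdot,x)$ has a singularity at $x$) nor the interior H\"older bound of Theorem \ref{th1.3}(ii) applies directly. One must extract a genuinely local boundary H\"older estimate for weak solutions of the homogeneous Robin problem on $B(x_0,2K^2r)\cap\Omega$, and this is furnished by a localized version of the oscillation-based argument used to prove Theorem \ref{th1.1}. Once that local boundary continuity is in hand, the rest of the proof is a straightforward integration argument.
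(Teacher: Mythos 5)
Your proposal is correct and follows essentially the same route as the paper: the representation $w_R^x(F)=\int_F G_R(y,x)\,d\sigma(y)$ reduces \eqref{eq1.16} to the two-sided comparability of $G_R(\cdot,x)$ on $B(x_0,r)\cap\partial\Omega$, which the paper likewise obtains by noting that $G_R(\cdot,x)$ solves the homogeneous local Robin problem \eqref{eq1.6} away from $x$, applying Theorem \ref{th1.2} in the interior, and passing to the boundary by continuity of $G_R(\cdot,x)$ up to $\partial\Omega$. The boundary-continuity point you flag as the main obstacle is handled in the paper exactly as you suggest, via the localized oscillation estimate of Lemma \ref{le3.2} together with Corollary \ref{c4.1} (this is carried out in Case (3) of the proof of Theorem \ref{th1.3}), so no new ingredient is needed.
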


Theorem \ref{th1.5} provides a quantitative characterization of the mutual absolute continuity between harmonic measures $\{w_R^x\}_{x\in\Omega}$
and the surface measure $\sigma$ at small scales. Following the approach used in the proof of \cite[Theorem 1.3]{DDEMM24},
we show Theorem \ref{th1.5} by applying the boundary Harnack
inequality obtained in Theorem \ref{th1.2} for Green's function $G_R$ and using \eqref{eq1.15}. We also
point out that, when $\beta\equiv c$ with $c\in(0,\infty)$ being a given constant, the conclusion of Theorem \ref{th1.5}
was obtained in \cite[Theorem 1.3]{DDEMM24}.
	
The remainder of this article is organized as follows.
	
In Section \ref{S2}, we first prove a trace embedding theorem for the Sobolev space $W^{1,2}(\Omega)$ and then prove the existence and
uniqueness of the weak solution to the Robin problem \eqref{Robin-Pro}. Furthermore, we also recall several Poincar\'e type inequality
essentially obtained in \cite{DDEMM24,DFM23}.
	
In Section \ref{S3}, we prove Theorem \ref{th1.1} by first establishing the $L^{\infty}$-priori estimate
for the weak solution to the Robin problem \eqref{Robin-Pro} and oscillation estimates for weak solutions
to the local Robin problem with homogeneous boundary conditions.

In Section \ref{S5}, we show Theorem \ref{th1.2} by first obtaining the Moser type estimate for weak solutions to the local Robin problem.
Moreover, in Section \ref{S4}, we give the proof of Theorem \ref{th1.3}.
	
In Section \ref{S6}, we prove Theorems \ref{th1.4} and \ref{th1.5} by using
the Riesz representation theorem, the maximum principle for the Robin problem \eqref{Robin-Pro},
the boundary Harnack inequality obtained in Theorem \ref{th1.2}, and properties of Green's function $G_R$ given in Theorem \ref{th1.3}.
	
We end this introduction by making some conventions on symbols. Throughout this article, we always denote by $C$ a \emph{positive constant} which is
independent of the main parameters involved, but it may vary from line to line. The \emph{symbol} $f\lesssim g$ means that $f\le Cg$.
If $f\lesssim g$ and $g\lesssim f$, then we write $f\sim g$.
If $f\le Cg$ and $g=h$ or $g\le h$, we then write $f\lesssim g=h$ or $f\lesssim g\le h$. For any ball $B:=B(x_B,r_B)$ in $\mathbb{R}^n$,
with $x_B\in\mathbb{R}^n$ and $r_B\in (0,\infty)$, and for any $k \in(0,\infty)$, let $k B:=B(x_B,k r_B)$.
For any subset $E$ in $\mathbb{R}^n$, we denote by $\mathbf{1}_{E}$ its \emph{characteristic function}.
We also let $\mathbb{N}:=\{1,2,\ldots\}$ and $\mathbb{Z}_+:=\mathbb{N}\cup\{0\}$. For any given normed spaces $\mathcal X$
and $\mathcal Y$ with the corresponding norms $\|\cdot\|_{\mathcal X}$ and $\|\cdot\|_{\mathcal Y}$, the
\emph{symbol} ${\mathcal X}\hookrightarrow{\mathcal Y}$ means that, for any $f\in \mathcal X$, then $f\in
\mathcal Y$ and $\|f\|_{\mathcal Y}\lesssim \|f\|_{\mathcal X}$ with the implicit positive constant independent of $f$.
Moreover, for any $q\in[1,\infty]$, we denote by $q'$ its \emph{conjugate exponent}, that is, $1/q+1/q'= 1$.
For a metric space $\Sigma$ equipped with a non-atomic measure $\mu$ on $\Sigma$,
a Borel set $E\subset\Sigma$ with $\mu(E)\in(0,\infty)$, and a locally integrable function $f$ on $\Sigma$,
let
\begin{equation*}
\fint_{E}f\,d\mu:=\frac{1}{\mu(E)}\int_{E}f\,d\mu.
\end{equation*}
Finally, in all proofs we consistently retain the symbols
introduced in the original theorem (or related statement).
	
\section{Preliminaries\label{S2}}
	
In this section, we first present a trace embedding theorem for the Sobolev
space $W^{1,2}(\Omega)$, which enables us to prove the existence and uniqueness of the weak solution to the Robin problem \eqref{Robin-Pro}.
Furthermore, we recall several Poincar\'e type inequalities essentially obtained in \cite{DDEMM24,DFM23}, which serve as a fundamental tool
for establishing the H\"older regularity and the boundary Harnack inequality for the weak solution to
the Robin problem \eqref{Robin-Pro}.
	
The following Sobolev embedding theorem was established in \cite[Theorem 2.1]{DDEMM24}.
	
\begin{lemma}\label{le2.1}
Let $n\ge 2$, $s\in (n-2,n)$, and $(\Omega,\sigma)$ satisfy Assumption \ref{Ass} with $\Omega$ being bounded. Then
$W^{1,2}(\Omega) \hookrightarrow L^p(\Omega)$ for any $p\in [2,2n/(n-2)]$ when
$n\ge 3$ and for any $p\in [2,\infty)$ when $n=2$. That is, for any $u\in W^{1,2}(\Omega)$,
$u\in L^p(\Omega)$ with $p\in [2,2n/(n-2)]$ when
$n\ge 3$ and with $p\in [2,\infty)$ when $n=2$ and, moreover,
$\|u\|_{L^p(\Omega)} \le C \|u\|_{W^{1,2}(\Omega)},$
where the positive constant $C$ depends only on $n$, $p$, and the geometric constants for $(\Omega,\sigma)$.
\end{lemma}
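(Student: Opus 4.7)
The plan is to patch together local Sobolev--Poincar\'e estimates on interior balls by means of a Whitney decomposition of $\Omega$ and the Harnack chain property (A3), with the $s$-Ahlfors regularity of $\partial\Omega$ controlling the combinatorics of the chains.

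First, I would choose a Whitney decomposition $\Omega=\bigcup_k Q_k$, where each $Q_k$ is a dyadic cube with $\ell(Q_k)\sim\mathrm{dist\,}(Q_k,\partial\Omega)$, and for each $k$ fix a ball $B_k^\ast\subset\Omega$ with $Q_k\subset B_k^\ast$ and $\mathrm{diam\,}(B_k^\ast)\sim\ell(Q_k)$. Because each $B_k^\ast$ sits at positive distance from $\partial\Omega$, the classical Sobolev--Poincar\'e inequality on Euclidean balls applies and yields, for the target exponent $p$,
\begin{equation*}
\left(\int_{B_k^\ast}\left|u-u_{B_k^\ast}\right|^p\,dx\right)^{1/p}\le C\left(\int_{B_k^\ast}|\nabla u|^2\,dx\right)^{1/2}.
\end{equation*}

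Next, I would fix a reference ball $B_0\subset\Omega$, for instance centered at a corkscrew point of $\Omega$ itself provided by (A2). Using (A3), any $B_k^\ast$ is joined to $B_0$ by a Harnack chain of overlapping balls of length depending only on the geometric constants and on $\log(\mathrm{diam\,}(\Omega)/\ell(Q_k))$. A standard telescoping argument across the chain then gives
\begin{equation*}
\left|u_{B_k^\ast}-u_{B_0}\right|\le C\int_{\Omega}|\nabla u|(x)\,\psi_k(x)\,dx,
\end{equation*}
for a non-negative weight $\psi_k$ supported along the chain and controlled by inverse-distance kernels. Summing these telescoping bounds against $\mathbf{1}_{B_k^\ast}$ and using the Whitney covering together with the $s$-Ahlfors regularity of $\partial\Omega$ to produce bounded-overlap estimates for the accumulated weight $\sum_k\mathbf{1}_{B_k^\ast}\psi_k$, I would obtain a global Sobolev--Poincar\'e inequality
\begin{equation*}
\|u-u_{B_0}\|_{L^p(\Omega)}\le C\|\nabla u\|_{L^2(\Omega)}.
\end{equation*}
Since $\Omega$ is bounded, $|u_{B_0}|\le C\|u\|_{L^2(\Omega)}$ and $\|\mathbf{1}_\Omega\|_{L^p(\Omega)}<\infty$, so the triangle inequality upgrades this to $\|u\|_{L^p(\Omega)}\le C\|u\|_{W^{1,2}(\Omega)}$.

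The main obstacle is the third step: obtaining uniform $L^q$ bounds on the accumulated weight $\sum_k\mathbf{1}_{B_k^\ast}\psi_k$ so that the chained local estimates sum to the correct global inequality at the sharp Sobolev exponent $2n/(n-2)$. This is precisely where the $s$-Ahlfors regularity of $\partial\Omega$ and the hypothesis $s>n-2$ enter, limiting how densely Whitney cubes can pile up near the boundary and keeping the chain combinatorics under control. The boundedness of $\Omega$ is used only at the very end to absorb the constant $u_{B_0}$.
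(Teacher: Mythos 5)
The paper does not actually prove this lemma: it is quoted verbatim from \cite[Theorem 2.1]{DDEMM24}, so there is no internal proof to compare against. Your chaining argument is nevertheless a legitimate, essentially self-contained route. Conditions (A2) and (A3) make a bounded $\Omega$ a uniform (hence John) domain, and bounded John domains are classically known to support the sharp Sobolev--Poincar\'e inequality via exactly the Whitney-plus-chain scheme you describe (Bojarski, Haj\l asz--Koskela), after which the boundedness of $\Omega$ absorbs the constant $u_{B_0}$ as you say. The clean way to close your third step at the critical exponent is to convert the telescoping sums into the pointwise bound $|u(x)-u_{B_0}|\le C\int_{\Omega}|\nabla u(y)|\,|x-y|^{1-n}\,dy$ --- the chain geometry guarantees that a chain ball reachable from $x$ and containing $y$ has radius $\gtrsim|x-y|$, and only boundedly many chain balls of each dyadic radius meet a given point --- and then invoke the $L^2\to L^{2n/(n-2)}$ boundedness of the Riesz potential $I_1$, rather than trying to sum the local estimates directly.

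Your diagnosis of where the geometry enters, however, is wrong, and since you single this point out as ``the main obstacle'' it needs correcting. The $s$-Ahlfors regularity of $\partial\Omega$ and the hypothesis $s>n-2$ play no role in this interior embedding: Whitney cubes of any open set have bounded overlap for free, and the control of the chain combinatorics comes entirely from the Harnack chain condition (A3) together with the interior corkscrew condition (A2), i.e.\ from the uniform/John structure of $\Omega$; the boundary measure $\sigma$ never appears. If your bounded-overlap estimate for $\sum_k\mathbf{1}_{B_k^\ast}\psi_k$ genuinely required Ahlfors regularity, the argument would be suspect, but the step goes through for a different reason than the one you give, so the proof is salvageable once that is repaired. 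An alternative one-line proof worth knowing: bounded uniform domains are $W^{1,2}$-extension domains by Jones' theorem, so the embedding follows at once from the Euclidean Sobolev inequality.
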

	
Recall that the \emph{fractional Sobolev space} $H(\partial \Omega,\sigma)$
on $\partial\Omega$ is defined to be the set of all $u\in L^2(\partial\Omega,\sigma)$ satisfying
$$
\|u\|^2_{\dot{H}(\partial \Omega,\sigma)} := \int_{\partial \Omega}\int_{\partial
\Omega} \dfrac{|u(x)-u(y)|^2}{|x-y|^{2(s+1)-n}}\,d\sigma(x)d\sigma(y)<\infty
$$	
(see, for example, \cite{DDEMM24,DFM21,DFM23}).
Meanwhile, for any $u\in H(\partial \Omega,\sigma)$, the \emph{Sobolev norm} $\|u\|_{H(\partial \Omega,\sigma)}$
is defined as
$$
\|u\|_{H(\partial \Omega,\sigma)}:=\|u\|_{L^2(\partial\Omega,\sigma)}
+\|u\|_{\dot{H}(\partial \Omega,\sigma)}.
$$
David et al. \cite[Theorem 6.6]{DFM23} and \cite[Theorem 2.1]{DDEMM24}
demonstrated that the trace operator $\mathrm{Tr}$ defined by \eqref{eq1.4}
is bounded from $W^{1,2}(\Omega)$ to $H(\partial \Omega,\sigma)$.
	
\begin{lemma}\label{le2.2}
Let $n\ge 2$, $s\in (n-2,n)$, and $(\Omega,\sigma)$ satisfy Assumption \ref{Ass} with $\Omega$ being bounded.
Then, for any $u\in W^{1,2}(\Omega)$, $\mathrm{Tr}(u)\in H(\partial \Omega,\sigma)$
and $\|\mathrm{Tr}(u)\|_{H(\partial \Omega,\sigma)}\le C\|u\|_{W^{1,2}(\Omega)}$ with
$C$ being a positive constant independent of $u$.
\end{lemma}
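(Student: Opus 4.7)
The plan is to reduce the lemma directly to the two cited results by decomposing
$$
\|\mathrm{Tr}(u)\|_{H(\partial\Omega,\sigma)}=\|\mathrm{Tr}(u)\|_{L^2(\partial\Omega,\sigma)}+\|\mathrm{Tr}(u)\|_{\dot H(\partial\Omega,\sigma)}
$$
and bounding each piece separately. For the $L^{2}$ part, \cite[Theorem 6.6]{DFM23} asserts both that the nontangential limit in \eqref{eq1.4} exists for $\sigma$-almost every $x\in\partial\Omega$ (so that $\mathrm{Tr}(u)$ is well defined) and that $\|\mathrm{Tr}(u)\|_{L^{2}(\partial\Omega,\sigma)}\le C\|u\|_{W^{1,2}(\Omega)}$, with $C$ depending only on $n$ and the geometric constants for $(\Omega,\sigma)$. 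For the homogeneous seminorm, \cite[Theorem 2.1]{DDEMM24} gives precisely $\|\mathrm{Tr}(u)\|_{\dot H(\partial\Omega,\sigma)}\le C\|u\|_{W^{1,2}(\Omega)}$ under exactly the standing Assumption \ref{Ass}. Summing the two inequalities would yield the claim.

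Should one prefer a self-contained argument, the strategy would be the following. Using the interior corkscrew condition (A2), one attaches to each surface ball $B(x,r)\cap\partial\Omega$ an interior Whitney-type ball of comparable radius lying in $\Omega$; Ahlfors regularity (A1) then converts a boundary $L^{2}$-integral into a sum of interior $L^{2}$-integrals over these Whitney balls, and Jensen together with a standard covering argument produces the $L^{2}$ trace bound. For the seminorm, one splits the double integral defining $\|\cdot\|_{\dot H(\partial\Omega,\sigma)}$ into dyadic annuli $\{2^{-k-1}\le|x-y|<2^{-k}\}$ and, on each annulus, telescopes the difference $\mathrm{Tr}(u)(x)-\mathrm{Tr}(u)(y)$ along a Harnack chain (supplied by (A3)) connecting Whitney balls near $x$ and near $y$; the exponent $2(s+1)-n$ in the kernel is tuned precisely so that, combined with the $s$-Ahlfors regularity of $\sigma$ and the Poincar\'e inequality applied on each ball of the chain, the geometric series in $k$ converges and leaves only the $W^{1,2}(\Omega)$-norm of $u$ on the right-hand side.

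The principal obstacle in a from-scratch proof lies in this seminorm step: one must control the length and overlap of the Harnack chains uniformly across scales and across pairs $(x,y)\in\partial\Omega\times\partial\Omega$, so that the resulting sum of boundary-ball contributions is absorbed into a single gradient integral over $\Omega$. This bookkeeping is exactly what \cite[Theorem 2.1]{DDEMM24} carries out, so invoking the two references suffices and no further technical work is required.
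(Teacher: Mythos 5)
Your proposal matches the paper exactly: the paper offers no independent proof of Lemma \ref{le2.2}, but simply records that the boundedness of $\mathrm{Tr}$ from $W^{1,2}(\Omega)$ into $H(\partial\Omega,\sigma)$ was established in \cite[Theorem 6.6]{DFM23} and \cite[Theorem 2.1]{DDEMM24}, which is precisely your primary argument. Your supplementary sketch of a from-scratch proof is a reasonable outline but is not needed, and the paper does not attempt it either.
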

	
Based on Lemma \ref{le2.2}, to obtain a general trace embedding theorem for $W^{1,2}(\Omega)$,
we proceed in two steps: first embedding the fractional Sobolev space $H(\partial \Omega,\sigma)$
into an integer-order Sobolev space defined on a suitable metric space (see \cite[Theorem 4]{HM97}),
and then embedding this integer-order Sobolev space into the Lebesgue space
$L^p(\partial \Omega,\sigma)$ by means of a Sobolev embedding theorem on $\partial\Omega$ (see \cite[Theorem 6]{H96}).
	
Now, we recall the definition of Sobolev spaces on a suitable metric space (see, for instance, \cite[Section 3]{H96}).
\begin{definition}\label{de2.1}
Let $n\ge 2$, $s\in (n-2,n)$, and $(\Omega,\sigma)$ satisfy Assumption \ref{Ass} with $\Omega$ being bounded. For any $x,y\in\partial\Omega$,
define $d_\lambda(x,y):=|x-y|^{1-\lambda}$ for some $\lambda\in [0,1)$. Then $(\partial \Omega,d_\lambda,\sigma)$ forms a metric space.
		
The \emph{Sobolev space} $W^{1,2}(\partial \Omega,d_\lambda,\sigma)$
 is defined to be the space of all $u\in L^2(\partial \Omega,\sigma)$
satisfying that there exists a non-negative function $g\in L^2(\partial \Omega, \sigma)$ such that,
for almost every $x,y\in \partial \Omega$ with respect to $\sigma$,
\begin{equation}\label{eq2.1}
|u(x)-u(y)| \le d_\lambda(x,y)\left[g(x)+g(y)\right].
\end{equation}
Moreover, for any $u\in W^{1,2}(\partial \Omega ,d_\lambda,\sigma)$,
its \emph{norm} $\|u\|_{W^{1,2}
(\partial \Omega,d_\lambda,\sigma)}$ is defined by setting
$$\|u\|_{W^{1,2}(\partial \Omega,d_\lambda,\sigma)} := \|u\|_{L^2(\partial \Omega,\sigma)}
+\inf_{g} \|g\|_{L^2(\partial \Omega,\sigma)},$$
where the infimum is taken over all non-negative functions $g\in L^2(\partial \Omega,
\sigma)$ satisfying \eqref{eq2.1}.
\end{definition}
	
Then we have the following Sobolev trace embedding theorem.
	
\begin{theorem}\label{th2.1}
Let $n\ge2$, $s\in(n-2,n)$, and $(\Omega,\sigma)$ satisfy Assumption \ref{Ass}
with $\Omega$ being bounded. Assume that $\lambda:=(n-s)/2$.
Then the following assertions hold.
\begin{itemize}
\item[{\rm(i)}] $H(\partial\Omega,\sigma)\hookrightarrow
W^{1,2}(\partial\Omega,d_\lambda,\sigma)$ and, for any $u\in H(\partial \Omega,\sigma)$,
\begin{equation}\label{eq2.2}
\|u\|_{W^{1,2}(\partial \Omega,d_\lambda,\sigma)} \le C \|u\|_{H(\partial\Omega,\sigma)},
\end{equation}
where the positive constant $C$ depends only on $n$ and the geometric constants for $(\Omega,\sigma)$.
\item[{\rm(ii)}] When $n\ge3$, $ W^{1,2}(\partial\Omega,d_\lambda,\sigma)
\hookrightarrow L^p(\partial\Omega,\sigma)$ for any given $p\in[1,\frac{2s}{n-2}]$ and, when $n=2$, $W^{1,2}(\partial\Omega,d_\lambda,\sigma)\hookrightarrow L^p(\partial\Omega,\sigma)$
for any given $p\in[1,\infty)$.
\item[{\rm(iii)}] For any $u\in W^{1,2}(\Omega)$, $\mathrm{Tr}(u)\in L^p(\partial\Omega,\sigma)$
with $p\in[1,\frac{2s}{n-2}]$ when $n\ge3$ and $\mathrm{Tr}(u)\in L^p(\partial\Omega,
\sigma)$ with $p\in[1,\infty)$ when $n=2$. Moreover, for any $u\in W^{1,2}(\Omega)$,
$\|\mathrm{Tr}(u)\|_{L^p(\partial\Omega,\sigma)} \le C \|u\|_{W^{1,2}(\Omega)},
$
where the positive constant $C$ is independent of $u$.
\end{itemize}
\end{theorem}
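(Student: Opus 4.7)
The plan is to prove (i), (ii), and (iii) in this order; assertion (iii) follows from Lemma \ref{le2.2} combined with (i) and (ii), so the core of the work is in the first two parts.

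For (i), I would exploit that $\|\cdot\|_{\dot H(\partial\Omega,\sigma)}$ is a fractional (Besov-type) Sobolev seminorm of smoothness $\alpha:=1-\lambda=(s+2-n)/2\in(0,1)$ on the $s$-Ahlfors regular space $(\partial\Omega,|\cdot|,\sigma)$, while $\|\cdot\|_{W^{1,2}(\partial\Omega,d_\lambda,\sigma)}$ is a Haj\l asz Sobolev norm in the snowflaked metric $d_\lambda=|\cdot|^{1-\lambda}$. A function is $d_\lambda$-Lipschitz precisely when it is $\alpha$-H\"older in the Euclidean metric, so the two norms should be comparable. Following the template of \cite{HM97}, I would construct a Haj\l asz gradient $g\in L^2(\partial\Omega,\sigma)$ for $u\in H(\partial\Omega,\sigma)$ by taking a maximal-function-type quantity built from the kernel that appears in $\|u\|_{\dot H(\partial\Omega,\sigma)}$, and then verify \eqref{eq2.1} by a dyadic telescoping argument: for Lebesgue points $x,y$ with $r=|x-y|$, write $u(x)-u(y)$ as the telescoping sum of ball averages anchored at $x$ and at $y$, bound each dyadic increment via Cauchy--Schwarz and the Ahlfors regularity of $\sigma$, and sum the resulting geometric series with ratio $2^{-\alpha}$ to obtain $|u(x)-u(y)|\lesssim r^\alpha[g(x)+g(y)]$. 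The $L^2$ bound on $g$ will then follow from the boundedness of the maximal operator on $(\partial\Omega,\sigma)$ and Fubini applied to $\|u\|_{\dot H(\partial\Omega,\sigma)}^2$.

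For (ii), the crucial point is that the metric measure space $(\partial\Omega,d_\lambda,\sigma)$ is itself Ahlfors regular of dimension $Q:=s/(1-\lambda)=2s/(s+2-n)$, because the $d_\lambda$-ball of radius $R$ at $x$ equals the Euclidean ball $B(x,R^{1/(1-\lambda)})$ and $\sigma$ is $s$-Ahlfors regular. When $n\ge 3$ one has $Q>2$, and the Haj\l asz--Sobolev embedding on doubling Ahlfors regular spaces from \cite{H96} yields $W^{1,2}(\partial\Omega,d_\lambda,\sigma)\hookrightarrow L^{p^*}(\partial\Omega,\sigma)$ with
\begin{equation*}
p^*=\frac{2Q}{Q-2}=\frac{2s}{n-2};
\end{equation*}
the range $p\in[1,2s/(n-2)]$ then follows via H\"older's inequality since $\sigma(\partial\Omega)<\infty$. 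When $n=2$ we have $Q=2$ (the borderline case) and the corresponding Trudinger-type endpoint embedding from \cite{H96} gives $L^p(\partial\Omega,\sigma)$ for every finite $p$. Finally, (iii) is the composition: Lemma \ref{le2.2}, (i), and (ii) produce the chain $W^{1,2}(\Omega)\hookrightarrow H(\partial\Omega,\sigma)\hookrightarrow W^{1,2}(\partial\Omega,d_\lambda,\sigma)\hookrightarrow L^p(\partial\Omega,\sigma)$ with the stated range of $p$.

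The main obstacle will be assertion (i): the Haj\l asz gradient $g$ must produce the exact snowflake exponent $1-\lambda$ on a possibly non-smooth $s$-dimensional set, and the $L^2$ control of $g$ must be extracted from the fractional seminorm via the dyadic telescoping/maximal function argument without losing the endpoint exponent. Once this is carried out, the remainder of the proof is careful bookkeeping with Ahlfors regularity and appeals to the metric Sobolev embeddings of \cite{H96,HM97}.
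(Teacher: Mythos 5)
Your proposal is correct and follows essentially the same route as the paper: part (i) is proved by building a Haj\l asz gradient from a maximal-function quantity controlled by the fractional seminorm (the paper does this with a single well-chosen ball and Cauchy--Schwarz rather than a dyadic telescoping sum, but this is the same \cite{HM97} template), part (ii) invokes the metric Sobolev embedding of \cite{H96} on the snowflaked space, whose Ahlfors dimension $Q=2s/(s+2-n)$ gives exactly $p^\ast=2s/(n-2)$ for $n\ge3$ and the Trudinger endpoint for $n=2$, and part (iii) is the composition with Lemma \ref{le2.2}. No gaps.
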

\begin{proof}
(i) Let $u\in H(\partial \Omega,\sigma)$. To show $u\in W^{1,2}(\partial \Omega,
d_\lambda,\sigma)$, it suffices to find a non-negative function $g\in L^2(\partial
\Omega,\sigma)$ such that \eqref{eq2.1} holds.
		
Let $x,y\in \partial \Omega$ with $x\neq y$ and select a ball $B(x_0,r_0)$ with
some $x_0 \in \partial \Omega$ such that $x,y\in B(x_0,r_0)$ and $|x-y|\le r_0 \le 2|x-y|$.
Then, by the triangle inequality, we find that
\begin{align}\label{eq2.3}
|u(x) - u(y)|&\le |u(x) - u_{B(x_0,r_0)}| + |u(y) - u_{B(x_0,r_0)}| \notag\\
&\le 2d_\lambda(x,y)\left[G^\lambda_{|x-y|}u(x) + G^{\lambda}_{|x-y|} u(y)\right],
\end{align}
where
$$u_{B(x_0,r_0)}:= \fint_{B(x_0,r_0)\cap \partial \Omega} u\,d\sigma$$
and, for any $z\in B(x_0,r_0)\cap\partial\Omega$ and $r\in(0,\infty)$,
$$G^{\lambda}_ru(z) := \sup\limits_{t\in(0,2r)} \frac{|u(z)-u_{B(x_0,t)}|}{t^{1-\lambda}}.
$$
Let $h:=\mathrm{diam\,}(\partial \Omega)$. From \eqref{eq2.3}, we further infer that
\begin{align}\label{eq2.4}
|u(x) - u(y)| \le 2 d_\lambda(x,y) \left[G_h^{\lambda} u(x) + G^\lambda_h u(y) \right].
\end{align}
		
Next, we show $G_h^{\lambda} u \in L^2(\partial \Omega, \sigma)$. By the definition
of $G_h^{\lambda} u$, we conclude that, for any given $x\in \partial \Omega$, there
exists a ball $B(x_0,r_0)$ with $x_0\in\partial\Omega$ and $r_0\in(0,h]$ such that $x\in B(x_0,r_0)$
and
$$
G^\lambda_h u(x) \le 2 \frac{|u(x)-u_{B(x_0,r_0)}|}{r_0^{1-\lambda}},
$$
which, combined with $\lambda:=(n-s)/2$, implies that, for any $x\in\partial\Omega$,
\begin{align}\label{eq2.5}
G^\lambda_h u(x)&\le \frac{2}{r_0^{1-\lambda}} \fint_{\partial \Omega\cap B(x_0,r_0)}
|u(x) - u(z)|\,d\sigma(z)\notag\\
&\lesssim\left[\fint_{\partial\Omega \cap B(x_0,r_0)} \dfrac{|u(x)-u(z)|^2}
{|x-z|^{2(1-\lambda)}}\,d\sigma(z)\right]^{\frac12}
\lesssim\left[\int_{\partial \Omega} \dfrac{|u(x)-u(z)|^2}{|x-z|^{2(s+1)-n}}
\,d\sigma(z)\right]^{\frac12}.
\end{align}
Meanwhile, from $u\in H(\partial \Omega,\sigma)$, it follows that
$$\|u\|^2_{H(\partial \Omega,\sigma)}=\int_{\partial \Omega} |u(x)|^2\,d\sigma(x)
+\int_{\partial \Omega}\int_{\partial \Omega} \dfrac{|u(x)-u(z)|^2}
{|x-z|^{2(s+1)-n}}\,d\sigma(x)d\sigma(z)<\infty,$$
which, together with \eqref{eq2.5}, further implies that $G_h^{\lambda} u \in
L^2(\partial \Omega,\sigma )$ and
\begin{align}\label{eq2.6}
\left\|G_h^{\lambda}u\right\|_{L^2(\partial \Omega,\sigma)}\lesssim
\|u\|_{H(\partial \Omega,\sigma)}.
\end{align}
		
Let $g:=2G_h^\lambda u$. Then, by \eqref{eq2.4} and \eqref{eq2.6},
we find that $g\in L^2(\partial \Omega,\sigma)$ and \eqref{eq2.1} holds.
Therefore, $u\in W^{1,2}(\partial \Omega,d_\lambda,\sigma)$ and \eqref{eq2.2} holds.
This finishes the proof of (i).
		
(ii) From the Sobolev embedding theorem for the Sobolev space $W^{1,2}(\partial \Omega,
d_\lambda,\sigma)$ (see \cite[Theorem 6]{H96}), we infer that, when $n \ge 3$,
$ W^{1,2} (\partial \Omega,d_\lambda,\sigma) \hookrightarrow L^{p_0}(\partial \Omega,\sigma)$ with $p_0:= \frac{2s}{n-2}$. Combined with the assumption that $\partial\Omega$ is bounded, this yields $W^{1,2} (\partial \Omega,d_\lambda,\sigma)
\hookrightarrow L^{p}(\partial \Omega,\sigma)$ for any $p\in[1,p_0]$ when $n\ge 3$.
Moreover, if $n=2$, by \cite[Theorem 6(2)]{H96}, we conclude that, for any
$u\in W^{1,2}(\partial \Omega,d_\lambda,\sigma)$,
$$\int_{\partial \Omega} \exp\left\{c\dfrac{[\sigma(\partial \Omega)]^{1/2}|u(x)-u_{\partial \Omega}|}
{\mathrm{diam\,}(\partial \Omega)
\|u\|_{W^{1,2} (\partial \Omega,d_\lambda,\sigma)}}\right\}\,d\sigma(x)\lesssim1,$$
which, together with the inequality that $\kappa^p \le C_{(p)} e^{\kappa}$ for any
$\kappa\in(0,\infty)$ and $p \in [1,\infty)$, implies that, for any given
$p\in[1,\infty)$, $|u-u_{\partial \Omega}|\in L^p(\partial\Omega,\sigma)$ and
$
\|u-u_{\partial \Omega}\|_{L^p(\partial\Omega,\sigma)}\lesssim\|u\|_{W^{1,2}
(\partial \Omega,d_\lambda,\sigma)}.
$
Combined with the assumption that $\partial\Omega$ is bounded, this yields,
for any given $p\in[1,\infty)$, $u\in L^p(\partial\Omega,\sigma)$ and
$\|u\|_{L^p(\partial\Omega,\sigma)}\lesssim\|u\|_{W^{1,2} (\partial \Omega,d_\lambda,\sigma)}$,
which completes the proof of (ii).
		
(iii) From the above just proved (i) and (ii) and Lemma \ref{le2.2},
it follows that assertion (iii) also holds. This finishes the proof of
Theorem \ref{th2.1}.
\end{proof}

Next, we recall several Poincar\'e type inequalities for
the Sobolev space $W^{1,p}(\Omega)$.

\begin{lemma}\label{le2.3}
Let $n\ge2$, $s\in(n-2,n)$, and $(\Omega,\sigma)$ satisfy Assumption \ref{Ass} with $\Omega$
being bounded. Assume that $q\in[p,np/(n-p)]$ when
either $n\ge3$ and $p\in[1,2]$ or $n=2$ and $p\in[1,2)$, and assume
$q\in[2,\infty)$ when $n=2$ and $p=2$. Then there exists a constant $K\in[1,\infty)$ depending only
on the geometric constants for $(\Omega,\sigma)$ such that, for any $x\in\partial\Omega$,
$r\in(0,\mathrm{diam\,}(\Omega)]$, and $u\in W^{1,p}(\Omega)$,
\begin{equation*}
\left[\fint_{B(x,r)\cap\Omega}|u-\overline{u}|^q\,dy\right]^{\frac1q}\le C
r\left[\fint_{B(x,Kr)\cap\Omega}|\nabla u|^p\,dy\right]^{\frac1p}
\end{equation*}
with $\overline{u}:=\fint_{E}u\,dy$ for any given set $E\subset B(x,2r)\cap\Omega$
satisfying $|E|\ge c|B(x,r)\cap\Omega|$ for some constant $c\in(0,1]$, where $C$
is a positive constant depending only on $c,\ p$, and the geometric constants for
$(\Omega,\sigma)$.
\end{lemma}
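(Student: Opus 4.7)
The plan is to reduce the inequality on the rough set $B(x,r) \cap \Omega$ to a sum of standard Sobolev--Poincar\'e estimates on interior Euclidean balls, patched together via the Harnack chain structure of the NTA domain. By the interior corkscrew condition (A2), fix a ball $B_0 := B(x_r, c_0 r) \subset \Omega \cap B(x,r)$ with $c_0 \in (0,1)$ geometric, and set $u_{B_0} := \fint_{B_0} u \, dy$. I first aim to prove the stated estimate with $\overline u$ replaced by $u_{B_0}$. Once this canonical case is established, the general case follows from the hypothesis $|E| \ge c\, |B(x,r) \cap \Omega|$: by Jensen's inequality,
\[
|u_E - u_{B_0}|^q \le \frac{1}{c\, |B(x,r) \cap \Omega|} \int_{B(x,r) \cap \Omega} |u - u_{B_0}|^q \, dy,
\]
and a triangle inequality in $L^q$ reduces the problem to the canonical one.

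Next, apply a Whitney-type decomposition of $\Omega$ into balls $\{B_j = B(z_j, r_j)\}_j$ with $r_j \sim \delta(z_j)$ and bounded overlap of the modestly enlarged family $\{\lambda B_j\}_j$ for a fixed $\lambda > 1$. On each $\lambda B_j \subset \Omega$, the classical Sobolev--Poincar\'e inequality in $\mathbb{R}^n$ yields
\[
\left( \fint_{B_j} |u - u_{B_j}|^q \, dy \right)^{1/q} \le C r_j \left( \fint_{\lambda B_j} |\nabla u|^p \, dy \right)^{1/p}
\]
for $q$ in the specified range (in the borderline case $n=2$, $p=2$, this uses the embedding $W^{1,2}(\lambda B_j) \hookrightarrow L^q(\lambda B_j)$ valid for every finite $q$). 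The Harnack chain property (A3) then connects each $B_j$ meeting $B(x,r)$ to $B_0$ through a chain $B_j = B_j^{(0)}, B_j^{(1)}, \dots, B_j^{(N_j)} = B_0$ of Whitney balls contained in $\Omega \cap B(x, Kr)$, with $N_j \lesssim \log(r/r_j)$ and a geometric constant $K \ge 1$. Telescoping $|u_{B_j} - u_{B_0}|$ along this chain, bounding each successive difference by a local Poincar\'e estimate on $B_j^{(k)} \cup B_j^{(k+1)}$, raising to the $q$-th power, and summing over $j$ weighted by $|B_j|$ will reproduce the desired global bound, with the gradient integrated over $B(x, Kr) \cap \Omega$.

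The principal technical obstacle is the combinatorial bookkeeping in the chaining step: to absorb the $N_j$ telescoping terms, one uses H\"older's inequality in $\ell^q$ against $\ell^{q'}$ and must show that the aggregate family $\{B_j^{(k)}\}_{j,k}$ has bounded overlap in $B(x, Kr) \cap \Omega$, so that the gradient integrals are reproduced only $O(1)$ times after summing. This counting is classical for uniform/NTA domains but demands careful control of chain lengths against the Whitney scales. The $s$-Ahlfors regularity of $\sigma$ plays no role in this purely interior estimate; the argument is essentially a combination of techniques already appearing in \cite{DDEMM24} and \cite{DFM23}, from which the lemma can be extracted.
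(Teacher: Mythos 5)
The paper does not actually prove Lemma \ref{le2.3}: it is quoted from \cite[Theorem 5.24 and Remark 5.32]{DFM23} (see also \cite[Lemma 2.2 and Remark 2]{DDEMM24}), so there is no in-paper argument to compare against. Your plan --- corkscrew ball as reference ball, reduction from $u_E$ to $u_{B_0}$ by Jensen, Whitney decomposition, local Euclidean Sobolev--Poincar\'e, and Harnack chaining back to $B_0$ --- is the standard route by which such estimates are obtained for uniform/NTA domains, and it is essentially the strategy of the cited references. The reduction step has only the cosmetic issue that $E\subset B(x,2r)\cap\Omega$ need not lie in $B(x,r)\cap\Omega$, so the canonical estimate must first be run at radius $2r$.

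There are, however, two places where the sketch as written does not close. First, the localization of the chains: condition (A3) only provides a Harnack chain somewhere in $\Omega$, with length controlled by $|x-y|/\min\{\delta(x),\delta(y)\}$; it does not by itself place the chain inside $B(x,Kr)\cap\Omega$, and the diameter bound $\mathrm{diam\,}(B_k)\le M\,\mathrm{dist\,}(B_k,\partial\Omega)$ gives no control at scale $r$ when $r\ll\mathrm{diam\,}(\Omega)$. One must either invoke the tent-domain construction of Lemma \ref{le2.8} (and chain inside $T(x,r)$, which is itself NTA) or use the quantitative ``cigar'' property of uniform domains; this needs to be said. Second, and more seriously, the summation of the telescoped terms $\sum_j|B_j|\,|u_{B_j}-u_{B_0}|^q$ at the critical exponent $q=np/(n-p)$ is not handled by ``H\"older in $\ell^q$ against $\ell^{q'}$ plus bounded overlap of the aggregate chain family.'' If you bound $|u_{B_j}-u_{B_0}|$ crudely by H\"older along the chain you get $|u_{B_j}-u_{B_0}|\lesssim r_j^{1-n/p}\bigl(\int_{B(x,Kr)\cap\Omega}|\nabla u|^p\bigr)^{1/p}$, and then $\sum_j|B_j|r_j^{(1-n/p)q}=\sum_j 1$ diverges. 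The classical ways out are: (a) the Boman-chain argument, which requires that every ball of the chain from $B_j$ to $B_0$ contains $B_j$ in a fixed dilate and that the family of indices $j$ routed through a given ball $B_i$ lies in $NB_i$ with bounded overlap, followed by a duality/maximal-function estimate; (b) a pointwise bound of $|u(y)-u_{B_0}|$ by a Riesz potential of $|\nabla u|$ plus the $L^p\to L^q$ mapping of $I_1$; or (c) proving the $(p,p)$-Poincar\'e by chaining and then invoking the self-improvement to the Sobolev exponent on doubling spaces. You have correctly identified this as the principal obstacle, but the mechanism you propose to resolve it is not the one that works, so the proof as sketched has a genuine gap precisely there.
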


The Poincar\'e type inequality stated in Lemma \ref{le2.3}
was essentially established in \cite[Theorem 5.24 and Remark 5.32]{DFM23}
(see also \cite[Lemma 2.2 and Remark 2]{DDEMM24}).

\begin{lemma}\label{le2.4}
Let $n\ge2$, $s\in(n-2,n)$, and $(\Omega,\sigma)$ satisfy Assumption \ref{Ass} with $\Omega$ being bounded.
Then there exists a constant $K\in[1,\infty)$ depending only on the geometric
constants for $(\Omega,\sigma)$ such that, for any $x\in\partial\Omega$,
$r\in(0,\mathrm{diam\,}(\Omega)/K]$, and $u\in W^{1,2}(\Omega)$,
\begin{equation*}
\left[\fint_{B(x,r)\cap\partial\Omega}|\mathrm{Tr}(u)-\overline{u}|^2
\,d\sigma(y)\right]^{\frac12}\le C
r\left[\fint_{B(x,Kr)\cap\Omega}|\nabla u|^2\,dy\right]^{\frac12}
\end{equation*}
with $\overline{u}:=\fint_{E}u\,dy$ for any given set $E\subset B(x,2r)\cap\Omega$
satisfying $|E|\ge c|B(x,r)\cap\Omega|$ for some constant $c\in(0,1]$, where $C$
is a positive constant depending only on $c,\ p$, and the geometric constants for
$(\Omega,\sigma)$.
\end{lemma}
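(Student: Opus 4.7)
The plan is to reduce to $\bar u=0$, pointwise approximate $\mathrm{Tr}(u)(y)$ for $\sigma$-a.e.\ $y\in B(x,r)\cap\partial\Omega$ by a dyadic chain of interior-ball averages shrinking to $y$, and then assemble a telescoping estimate via Lemma \ref{le2.3} and the Harnack chain condition (A3).

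First I would use that $\bar u$ is a constant with $\nabla(u-\bar u)=\nabla u$ and $\mathrm{Tr}(u-\bar u)=\mathrm{Tr}(u)-\bar u$, so without loss of generality $\bar u=0$. For each $y\in B(x,r)\cap\partial\Omega$ at which the limit in \eqref{eq1.4} exists, I would pick via (A2), for every $k\in\mathbb{Z}_+$, a corkscrew $x_k^y\in B(y,2^{-k}r)\cap\Omega$ with $\delta(x_k^y)\gtrsim 2^{-k}r$, and set $B_k^y:=B(x_k^y,\delta(x_k^y)/2)$. Then \eqref{eq1.4} yields $\mathrm{Tr}(u)(y)=\lim_{k\to\infty} u_{B_k^y}$, so telescoping gives $|\mathrm{Tr}(u)(y)-u_{B_0^y}|\le\sum_{k\ge0}|u_{B_{k+1}^y}-u_{B_k^y}|$. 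Both $B_k^y$ and $B_{k+1}^y$ lie in $B(y,2^{-k+1}r)\cap\Omega$, and (A3) joins them by a Harnack chain of length uniformly bounded in $k,y$; applying Lemma \ref{le2.3} with $p=q=2$ along each link yields
\begin{equation*}
|u_{B_{k+1}^y}-u_{B_k^y}|^2\lesssim (2^{-k}r)^2\fint_{B(y,K'2^{-k}r)\cap\Omega}|\nabla u|^2\,dz
\end{equation*}
for some $K'$ depending only on the geometric constants.

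Next I would apply Cauchy--Schwarz to the telescoping sum with weights $2^{-k/2}\cdot 2^{-k/2}$, integrate in $y\in B(x,r)\cap\partial\Omega$, and swap the order of integration by Fubini. For fixed $z\in\Omega$, (A1) gives $\sigma(\{y\in\partial\Omega:|y-z|\le K'2^{-k}r\})\lesssim (2^{-k}r)^s$, while (A2) gives $|B(y,K'2^{-k}r)\cap\Omega|\gtrsim (2^{-k}r)^n$, so the integrated bound becomes
\begin{equation*}
\int_{B(x,r)\cap\partial\Omega}|\mathrm{Tr}(u)(y)-u_{B_0^y}|^2\,d\sigma(y)\lesssim r^{s+2-n}\sum_{k\ge 0}2^{-k(s+3-n)}\int_{B(x,K''r)\cap\Omega}|\nabla u|^2\,dz,
\end{equation*}
and the geometric series converges precisely because $s>n-2$.

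Finally, the $y$-dependent base term $|u_{B_0^y}|$ is controlled by one more application of Lemma \ref{le2.3} with reference set $E$ and $\bar u=0$, since $B_0^y\subset B(x,2r)\cap\Omega$ has volume comparable to that of $E$. Combining and normalizing by $\sigma(B(x,r)\cap\partial\Omega)\sim r^s$ (again via (A1)) yields the claim. I expect the main obstacle to be the uniformity of the Harnack-chain length and of the Poincar\'e constants across all dyadic scales $k$ and all boundary points $y$; this is ensured by the scale-invariance built into Assumption \ref{Ass}, and the condition $s>n-2$ is exactly what makes the Fubini step summable.
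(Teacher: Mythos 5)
The paper does not prove this lemma at all: it is quoted verbatim as \cite[Lemma 2.3]{DDEMM24}. Your telescoping argument --- reduce to $\overline{u}=0$, realize $\mathrm{Tr}(u)(y)$ as the limit of averages over corkscrew balls $B_k^y$ at dyadic scales via \eqref{eq1.4}, control consecutive differences by Lemma \ref{le2.3} at scale $2^{-k}r$, and sum via Cauchy--Schwarz and Fubini using (A1) to count $\sigma(\{y:|y-z|<K'2^{-k}r\})\lesssim(2^{-k}r)^s$ --- is the standard proof of such trace Poincar\'e inequalities and is structurally sound. (A side remark: you do not actually need the Harnack chain condition (A3) for the link estimate; since $B_k^y$ and $B_{k+1}^y$ are both subsets of $B(y,2^{-k+1}r)\cap\Omega$ of comparable measure, Lemma \ref{le2.3} applied once at that scale with $E:=B_k^y$ already gives $|u_{B_{k+1}^y}-u_{B_k^y}|\lesssim 2^{-k}r\,[\fint_{B(y,K'2^{-k}r)\cap\Omega}|\nabla u|^2]^{1/2}$.)

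There is, however, a concrete slip in the summation step. With your weights $2^{-k/2}$, Cauchy--Schwarz gives $(\sum_k a_k)^2\lesssim\sum_k 2^k a_k^2$ with $a_k^2\lesssim(2^{-k}r)^{2-n}\int_{B(y,K'2^{-k}r)\cap\Omega}|\nabla u|^2\,dz$, and after Fubini the $k$-th term carries the factor $2^k(2^{-k}r)^{s+2-n}=r^{s+2-n}2^{-k(s+1-n)}$, not $2^{-k(s+3-n)}$ as you wrote. This series diverges for $s\in(n-2,n-1]$, so the argument as stated fails on part of the admissible range of $s$. The fix is immediate: split $a_k=2^{-\varepsilon k/2}\cdot 2^{\varepsilon k/2}a_k$ with any $\varepsilon\in(0,s+2-n)$, which yields $\sum_k 2^{-k(s+2-n-\varepsilon)}<\infty$; this is the precise place where the hypothesis $s>n-2$ enters. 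With that correction (and the routine verification that the corkscrew balls $B_k^y$ lie in the non-tangential approach region so that \eqref{eq1.4} indeed gives $\mathrm{Tr}(u)(y)=\lim_k u_{B_k^y}$ for $\sigma$-a.e.\ $y$), your proof is complete.
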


Lemma \ref{le2.4} is precisely \cite[Lemma 2.3]{DDEMM24}.

\begin{lemma}\label{le2.5}
Let $n\ge2$, $s\in(n-2,n)$, and $(\Omega,\sigma)$ satisfy Assumption \ref{Ass} with $\Omega$ being bounded.
Let $p:=2n/(n-2)$ when $n\ge3$ and $p\in[2,\infty)$
when $n=2$, $x_0\in\partial\Omega$, and $r\in(0,\mathrm{diam\,}(\Omega)/4]$.
Then, for any $u\in W^{1,2}(\Omega)$ with $\mathrm{Tr}(u)=0$ on
$\partial B(x_0,r)\cap\Omega$,
\begin{equation*}
\left[\fint_{B(x_0,r)\cap\Omega}|u|^p\,dy\right]^{\frac1p}
\le C
r\left[\fint_{B(x_0,r)\cap\Omega}|\nabla u|^2\,dy\right]^{\frac12},
\end{equation*}
where $C$ is a positive constant depending only on $p$ and the geometric constants for $(\Omega,\sigma)$.
\end{lemma}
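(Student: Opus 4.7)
My plan is to reduce Lemma \ref{le2.5} to the Poincar\'e--Sobolev inequality of Lemma \ref{le2.3} applied to the zero-extension $\tilde u:=u\,\mathbf{1}_{B(x_0,r)\cap\Omega}$, viewed as a function on $\Omega$. The hypothesis $\mathrm{Tr}(u)=0$ on $\partial B(x_0,r)\cap\Omega$ is the standard gluing condition across the smooth interior interface $\partial B(x_0,r)\cap\Omega$, so $\tilde u\in W^{1,2}(\Omega)$ with distributional gradient $\nabla\tilde u=\mathbf{1}_{B(x_0,r)\cap\Omega}\nabla u$. The essential gain is that $\tilde u$ vanishes identically on $\Omega\setminus B(x_0,r)$, enabling Lemma \ref{le2.3} to be invoked with a vanishing reference mean.

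The main geometric step is to produce a ``reference ball'' $B^\ast:=B(z^\ast,\rho)$ with $\rho\sim r$, contained in $\Omega\setminus B(x_0,r)$, and with $B^\ast\subset B(x_0,Mr)$ for some constant $M$ depending only on the geometric data. I would construct $B^\ast$ in two steps. First, a dyadic shell argument using the $s$-Ahlfors regularity of $\partial\Omega$---comparing $\sigma(\partial\Omega\cap B(x_0,\rho_k))$ at the scales $\rho_k:=2^k\cdot\frac{3}{2}r$---shows that, up to $k\le \lceil\log_2(C_2/C_1)/s\rceil+1$ (where $C_1,C_2$ are the Ahlfors constants), some shell $\partial\Omega\cap(B(x_0,\rho_{k+1})\setminus B(x_0,\rho_k))$ must have positive $\sigma$-measure, yielding a boundary point $y_0\in\partial\Omega$ with $\frac{3}{2}r\le|y_0-x_0|\le M_0 r$. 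Second, the interior corkscrew condition at $y_0$ with scale $r/2$ produces the ball $B^\ast\subset \Omega\cap B(y_0,r/2)\subset \Omega\setminus B(x_0,r)$ with $|B^\ast|\sim r^n$ and $B^\ast\subset B(x_0,(M_0+1)r)$.

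Set $\tilde r:=(M_0+1)r$ and $E:=B^\ast$. Then $E\subset B(x_0,2\tilde r)\cap\Omega$ and $|E|\ge c|B(x_0,\tilde r)\cap\Omega|$, with $c$ depending only on the geometric data; moreover $\tilde r\lesssim\mathrm{diam\,}(\Omega)$ by the hypothesis $r\le\mathrm{diam\,}(\Omega)/4$ (up to absorbing the factor $M_0+1$ into the geometric constants, which is what the statement allows via the implicit constant $C$ in the conclusion). Invoking Lemma \ref{le2.3} at center $x_0$, scale $\tilde r$, gradient exponent $2$, and the maximal Lebesgue exponent on the left---namely $p$ as chosen in Lemma \ref{le2.5}---and using that $\overline{\tilde u}=\fint_E\tilde u\,dy=0$ because $\tilde u\equiv 0$ on $E$, I obtain
\begin{equation*}
\left[\fint_{B(x_0,\tilde r)\cap\Omega}|\tilde u|^p\,dy\right]^{\frac{1}{p}}
\le C\tilde r\left[\fint_{B(x_0,K\tilde r)\cap\Omega}|\nabla\tilde u|^2\,dy\right]^{\frac{1}{2}}.
\end{equation*}

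The last step is bookkeeping: using the support inclusions $\operatorname{supp}\tilde u\cup\operatorname{supp}\nabla\tilde u\subset B(x_0,r)\cap\Omega$, the integrals over $B(x_0,\tilde r)\cap\Omega$ and $B(x_0,K\tilde r)\cap\Omega$ collapse to integrals over $B(x_0,r)\cap\Omega$; combined with the volume comparability $|B(x_0,r)\cap\Omega|\sim |B(x_0,\tilde r)\cap\Omega|\sim |B(x_0,K\tilde r)\cap\Omega|\sim r^n$ (the lower bound at scale $r$ comes from the interior corkscrew at $x_0$, the upper bounds are trivial) and with $\tilde r\sim r$, this yields the desired inequality. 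The principal obstacle is the construction of $B^\ast$: the corkscrew condition applied at $x_0$ itself need not place an interior ball outside $B(x_0,r)$ when the corkscrew constant is small, so the detour via a nearby boundary point $y_0$, selected by the quantitative dyadic shell argument based on Ahlfors regularity, is essential. Once $B^\ast$ is in hand, the rest is a direct application of Lemma \ref{le2.3}.
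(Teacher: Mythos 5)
Your argument is sound, but it is worth saying up front that the paper does not actually prove Lemma \ref{le2.5}: it is quoted verbatim from \cite[Corollary 7.9]{DFM23} (see also \cite[Lemma 2.7]{DDEMM24}), so there is no internal proof to compare against. What you have done instead is derive it from Lemma \ref{le2.3}, which is itself a cited Poincar\'e inequality, via the zero extension $\tilde u:=u\mathbf{1}_{B(x_0,r)\cap\Omega}$ and a reference set $E=B^\ast$ on which $\tilde u$ vanishes. That reduction is legitimate: the gluing across the smooth interface $\partial B(x_0,r)\cap\Omega$ is a local statement (test functions are compactly supported in $\Omega$, hence away from $\partial\Omega$), the corkscrew ball $B^\ast$ at a boundary point $y_0$ with $\tfrac32 r\le|y_0-x_0|\le M_0r$ does lie in $\Omega\setminus B(x_0,r)$ with $|B^\ast|\gtrsim r^n\gtrsim|B(x_0,\tilde r)\cap\Omega|$, the exponent $p=2n/(n-2)$ (resp.\ $p<\infty$ for $n=2$) is admissible as the $q$ of Lemma \ref{le2.3}, and the support of $\nabla\tilde u$ collapses the enlarged average on the right back to $B(x_0,r)\cap\Omega$, with all normalizing volumes comparable to $r^n$ by the interior corkscrew at $x_0$. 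This buys a self-contained derivation from a weaker cited input, at the cost of the geometric construction of $B^\ast$.

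Two small points need tightening. First, your dyadic shell argument invokes the lower Ahlfors bound at radii $\rho_k$ that may exceed $\mathrm{diam\,}(\partial\Omega)$ when $r$ is comparable to $\mathrm{diam\,}(\Omega)$ (recall $r$ is only assumed $\le\mathrm{diam\,}(\Omega)/4$, and $\mathrm{diam\,}(\partial\Omega)=\mathrm{diam\,}(\Omega)$ for bounded $\Omega$); in that regime you should instead pick two boundary points nearly realizing the diameter, one of which is at distance at least $\tfrac74 r$ and at most $\mathrm{diam\,}(\Omega)\le M_0r$ from $x_0$. Second, the phrase ``absorbing the factor $M_0+1$ into the geometric constants'' does not address the real issue with $\tilde r=(M_0+1)r$: Lemma \ref{le2.3} is only stated for radii in $(0,\mathrm{diam\,}(\Omega)]$, and $\tilde r$ can exceed that. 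The fix is to apply Lemma \ref{le2.3} at the scale $\min\{\tilde r,\mathrm{diam\,}(\Omega)\}$; when the minimum is $\mathrm{diam\,}(\Omega)$ one has $B(x_0,2\,\mathrm{diam\,}(\Omega))\supset\overline\Omega\supset E$ and $|E|\gtrsim r^n\sim|\Omega|$, so the hypotheses on $E$ still hold and all scales remain comparable to $r$. With these repairs the proof is complete.
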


The Poincar\'e type inequality given in Lemma \ref{le2.5}
was essentially obtained in \cite[Corollary 7.9]{DFM23}
(see also \cite[Lemma 2.7]{DDEMM24}).

\begin{lemma}\label{le2.6}
Let $n\ge2$, $s\in(n-2,n)$, and $(\Omega,\sigma)$ satisfy Assumption \ref{Ass} with $\Omega$ being bounded.
Then there exists a constant $K\in[1,\infty)$ depending only on the geometric
constants for $(\Omega,\sigma)$ such that, for any $x\in\partial\Omega$,
$r\in(0,\mathrm{diam\,}(\Omega)]$, $E\subset B(x,r)\cap\partial\Omega$
with $\sigma(E)>0$, and $u\in W^{1,2}(\Omega)$,
\begin{equation*}
\fint_{B(x,r)\cap\Omega}|u|^2\,dy
\le C\frac{\sigma(B(x,r)\cap\partial\Omega)}{\sigma(E)}
r^2\fint_{B(x,Kr)\cap\Omega}|\nabla u|^2\,dy+2\fint_E|u|^2\,d\sigma,
\end{equation*}
where $C$ is a positive constant depending only on the geometric constants
for $(\Omega,\sigma)$.
\end{lemma}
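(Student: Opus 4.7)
The plan is to estimate $\fint_{B(x,r)\cap\Omega}|u|^2\,dy$ by introducing two averages, one on the subset $E\subset\partial\Omega$ and one on the interior ball $B(x,r)\cap\Omega$, and then comparing everything with these reference values via the inequality $(a+b)^2\le 2a^2+2b^2$ and the Poincar\'e--type estimates in Lemmas~\ref{le2.3} and \ref{le2.4}. Set
$$c:=\fint_{E}\mathrm{Tr}(u)\,d\sigma,\qquad \overline{u}_B:=\fint_{B(x,r)\cap\Omega} u\,dy.$$
By $|u|^2\le 2|u-c|^2+2c^2$, Jensen's inequality $c^2\le\fint_E|\mathrm{Tr}(u)|^2\,d\sigma$, and the convention that $|u|$ on $E\subset\partial\Omega$ stands for $|\mathrm{Tr}(u)|$, one obtains
$$\fint_{B(x,r)\cap\Omega}|u|^2\,dy\le 2\fint_{B(x,r)\cap\Omega}|u-c|^2\,dy+2\fint_{E}|u|^2\,d\sigma,$$
so the second term is already in the desired form and only $\fint_{B(x,r)\cap\Omega}|u-c|^2\,dy$ remains to be controlled.

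Next, I would split again:
$$\fint_{B(x,r)\cap\Omega}|u-c|^2\,dy\le 2\fint_{B(x,r)\cap\Omega}|u-\overline{u}_B|^2\,dy+2|\overline{u}_B-c|^2.$$
The first piece is estimated directly by Lemma~\ref{le2.3} (taking $p=q=2$ and the reference set $B(x,r)\cap\Omega$ itself), yielding
$$\fint_{B(x,r)\cap\Omega}|u-\overline{u}_B|^2\,dy\le C r^2\fint_{B(x,K_1 r)\cap\Omega}|\nabla u|^2\,dy$$
for some $K_1\in[1,\infty)$ depending only on the geometric constants. The second piece is where the factor $\sigma(B(x,r)\cap\partial\Omega)/\sigma(E)$ enters: since $\overline{u}_B-c=\fint_E(\overline{u}_B-\mathrm{Tr}(u))\,d\sigma$, Jensen's inequality and $E\subset B(x,r)\cap\partial\Omega$ give
$$|\overline{u}_B-c|^2\le\fint_E|\overline{u}_B-\mathrm{Tr}(u)|^2\,d\sigma\le\frac{\sigma(B(x,r)\cap\partial\Omega)}{\sigma(E)}\fint_{B(x,r)\cap\partial\Omega}|\mathrm{Tr}(u)-\overline{u}_B|^2\,d\sigma,$$
and I would then invoke the boundary Poincar\'e inequality Lemma~\ref{le2.4} (again with reference set $B(x,r)\cap\Omega$) to bound the right-hand side by $C(\sigma(B(x,r)\cap\partial\Omega)/\sigma(E))\,r^2\fint_{B(x,K_2 r)\cap\Omega}|\nabla u|^2\,dy$ for some $K_2\in[1,\infty)$.

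Finally, setting $K:=\max\{K_1,K_2\}$, absorbing the interior Poincar\'e contribution into the boundary one using $\sigma(B(x,r)\cap\partial\Omega)/\sigma(E)\ge 1$, and putting the two displays together yields the stated inequality with the explicit constant $2$ in front of $\fint_E|u|^2\,d\sigma$ (this is the reason for splitting with the elementary $2a^2+2b^2$ bound at the very first step, rather than with a Young-type constant). The main technical care will be to choose the reference averages $c$ and $\overline{u}_B$ consistently so that both Lemmas~\ref{le2.3} and \ref{le2.4} apply with the same interior set (namely $B(x,r)\cap\Omega$, which trivially satisfies the measure lower bound hypothesis in those lemmas with $c=1$), and to keep track of the dilation constant $K$; I do not foresee a genuine obstacle beyond bookkeeping.
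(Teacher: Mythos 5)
Your proposal is correct, and it is worth noting that the paper itself does not prove this statement: it simply records that Lemma \ref{le2.6} is \cite[Lemma 2.8]{DDEMM24}. Your argument supplies a self-contained derivation from the two Poincar\'e inequalities that the paper does quote, namely Lemma \ref{le2.3} (interior) and Lemma \ref{le2.4} (boundary), and the structure is sound: the split $|u|^2\le 2|u-c|^2+2c^2$ with $c:=\fint_E\mathrm{Tr}(u)\,d\sigma$ produces the term $2\fint_E|u|^2\,d\sigma$ with the exact constant $2$ required by the statement; the comparison $|\overline{u}_B-c|^2\le\frac{\sigma(B(x,r)\cap\partial\Omega)}{\sigma(E)}\fint_{B(x,r)\cap\partial\Omega}|\mathrm{Tr}(u)-\overline{u}_B|^2\,d\sigma$ is exactly where the density factor $\sigma(B(x,r)\cap\partial\Omega)/\sigma(E)$ must enter; and both reference sets are legitimately taken to be $B(x,r)\cap\Omega$ itself, so Lemmas \ref{le2.3} and \ref{le2.4} apply with the same centre of mass $\overline{u}_B$. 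The one piece of bookkeeping you glossed over is the range of $r$: Lemma \ref{le2.6} is stated for all $r\in(0,\mathrm{diam\,}(\Omega)]$, whereas Lemma \ref{le2.4} only covers $r\in(0,\mathrm{diam\,}(\Omega)/K]$. For the remaining range $r\in(\mathrm{diam\,}(\Omega)/K,\mathrm{diam\,}(\Omega)]$ one applies the same argument at the scale $r_0:=\mathrm{diam\,}(\Omega)/K$, using that $r\sim r_0$ and that $|B(x,r)\cap\Omega|\sim|B(x,r_0)\cap\Omega|\sim|\Omega|$ by the corkscrew condition, so all averages change only by constants depending on the geometric constants; this costs nothing but should be said.
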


Lemma \ref{le2.6} is exactly \cite[Lemma 2.8]{DDEMM24}.
	
To show the existence and uniqueness of the weak solution to the Robin problem \eqref{Robin-Pro}, we define the bilinear form $B[\cdot, \cdot]$
as follows: For any $u,v\in W^{1,2}(\Omega)$, let
$$
B[u,v]:= \int_{\Omega}A\nabla u\cdot \nabla v \,dx + \int_{\partial \Omega}\beta\mathrm{Tr}(u)\mathrm{Tr}(v)\,d\sigma.
$$
	
\begin{theorem}\label{th2.2}	
Let $n\ge2$, $s\in(n-2,n)$, and $(\Omega,\sigma)$ satisfy Assumption \ref{Ass} with $\Omega$ being bounded.
Assume that the non-negative function $\beta$ satisfies \eqref{eq1.1}.
Then, for any $f \in L^{p_0}(\partial \Omega, \sigma)$ with $p_0:=2s/(2s+2-n)$ when $n\ge3$ and $p_0\in(1,\infty)$ when $n=2$,
there exists a unique $u\in W^{1,2}(\Omega)$ such that, for any $\varphi\in C_{\rm c}^\infty(\mathbb{R}^n)$,
\begin{equation}\label{eq2.7}
\int_{\Omega}A\nabla u \cdot \nabla \varphi\,dx +\int_{\partial \Omega}\beta\mathrm{Tr}(u)\varphi\,d\sigma
=\int_{\partial \Omega}f\varphi\,d\sigma;
\end{equation}
that is, there exists a unique weak solution to the Robin problem \eqref{Robin-Pro}.
Furthermore,
\begin{equation}\label{eq2.8}
\|u\|_{W^{1,2}(\Omega)} \le C \|f\|_{L^{p_0}(\partial \Omega,\sigma)},
\end{equation}
where the positive constant $C$ depends only on $n$, the geometric constants for $(\Omega,\sigma)$, $\mu_0$ in \eqref{eq1.1}, and
$\|\beta\|_{L^{q_0}(\partial\Omega,\sigma)}$.
\end{theorem}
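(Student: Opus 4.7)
The plan is to apply the Lax--Milgram theorem to the bilinear form $B[\cdot,\cdot]$ on the Hilbert space $W^{1,2}(\Omega)$, paired with the linear functional $L(v):=\int_{\partial\Omega}f\,\mathrm{Tr}(v)\,d\sigma$. The three ingredients to verify are continuity of $B$, continuity of $L$, and coercivity of $B$; then uniqueness in the stated sense will follow from a density argument.

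First I would check continuity of $B$. The $A$-gradient term is controlled by $\mu_0^{-1}\|\nabla u\|_{L^2(\Omega)}\|\nabla v\|_{L^2(\Omega)}$ via \eqref{eq1.2} and Cauchy--Schwarz. For the boundary term, apply H\"older's inequality with exponents $q_0,\,2q_0',\,2q_0'$ to obtain
\begin{equation*}
\int_{\partial\Omega}\beta|\mathrm{Tr}(u)||\mathrm{Tr}(v)|\,d\sigma
\le \|\beta\|_{L^{q_0}(\partial\Omega,\sigma)}\|\mathrm{Tr}(u)\|_{L^{2q_0'}(\partial\Omega,\sigma)}\|\mathrm{Tr}(v)\|_{L^{2q_0'}(\partial\Omega,\sigma)}.
\end{equation*}
A direct computation shows the assumption $q_0\ge s/(s+2-n)$ in \eqref{eq1.1} is equivalent to $2q_0'\le 2s/(n-2)$ when $n\ge 3$, while for $n=2$ the bound $q_0>1$ yields $2q_0'<\infty$. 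In either case, $2q_0'$ lies in the range covered by Theorem \ref{th2.1}(iii), so the trace embedding bounds both factors by $\|u\|_{W^{1,2}(\Omega)}$ and $\|v\|_{W^{1,2}(\Omega)}$. Continuity of $L$ is handled identically: the conjugate exponent of $p_0=2s/(2s+2-n)$ is $p_0'=2s/(n-2)$, which again sits in the trace embedding range.

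The main obstacle is the coercivity of $B$ on $W^{1,2}(\Omega)$. From \eqref{eq1.2} and the pointwise lower bound $\beta\ge a_0$ on $E_0$ we immediately obtain
\begin{equation*}
B[u,u]\ge \mu_0\|\nabla u\|_{L^2(\Omega)}^2+a_0\int_{E_0}|\mathrm{Tr}(u)|^2\,d\sigma.
\end{equation*}
To upgrade this to a full $W^{1,2}$-bound I would apply Lemma \ref{le2.6} with the test set $E:=E_0$ (the hypothesis $\sigma(E_0)>0$ is precisely what makes this legal), choosing $x\in\partial\Omega$ and $r$ equal to $\mathrm{diam}(\Omega)$ so that $B(x,r)\cap\Omega=\Omega$ and $B(x,Kr)\cap\Omega=\Omega$. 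This produces a global Poincar\'e-type inequality
\begin{equation*}
\|u\|_{L^2(\Omega)}^2\lesssim \|\nabla u\|_{L^2(\Omega)}^2+\|\mathrm{Tr}(u)\|_{L^2(E_0,\sigma)}^2,
\end{equation*}
with constants depending only on $\Omega$, $\sigma$, and $\sigma(E_0)$. Combining with the previous display yields $B[u,u]\gtrsim \|u\|_{W^{1,2}(\Omega)}^2$. This is the sole step in which the assumption that $\beta\ge a_0$ on a set of positive surface measure is used in an essential way; without it, constants could lie in the kernel of $B$.

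With coercivity and continuity established, Lax--Milgram delivers a unique $u\in W^{1,2}(\Omega)$ satisfying $B[u,v]=L(v)$ for every $v\in W^{1,2}(\Omega)$, together with the bound $\|u\|_{W^{1,2}(\Omega)}\le c^{-1}\|L\|\lesssim \|f\|_{L^{p_0}(\partial\Omega,\sigma)}$, which is \eqref{eq2.8}. Restricting to $v=\varphi|_\Omega$ for $\varphi\in C_{\rm c}^\infty(\mathbb{R}^n)$ yields \eqref{eq2.7}, so $u$ is a weak solution. To close the uniqueness argument in the sense of the definition, I would invoke the fact that the restrictions of $C_{\rm c}^\infty(\mathbb{R}^n)$ functions are dense in $W^{1,2}(\Omega)$ for bounded one-sided NTA domains (via a Jones-type $W^{1,2}$-extension operator together with mollification and a cutoff); any other weak solution $u'$ would yield $B[u-u',\varphi|_\Omega]=0$ for all such $\varphi$, which by density extends to all of $W^{1,2}(\Omega)$ and forces $u=u'$ by coercivity.
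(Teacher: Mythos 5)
Your proposal is correct and follows essentially the same route as the paper: coercivity of $B$ via the ellipticity bound, the lower bound $\beta\ge a_0$ on $E_0$, and the Poincar\'e-type inequality of Lemma \ref{le2.6}; continuity of $B$ and of the data functional via the trace embedding of Theorem \ref{th2.1}(iii) with exactly the same exponent bookkeeping ($q_1=2q_0'$ and $p_0'=2s/(n-2)$); and then Lax--Milgram. The only addition is your explicit density argument for passing from test functions in $C_{\rm c}^\infty(\mathbb{R}^n)$ to all of $W^{1,2}(\Omega)$, which the paper leaves implicit.
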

	
\begin{proof}
By \eqref{eq1.2} and \eqref{eq1.1}, we find that, for any $u\in W^{1,2}(\Omega)$,
\begin{equation}\label{eq2.9}
B[u,u] = \int_{\Omega} A\nabla u \cdot \nabla u\,dx + \int_{\partial \Omega}\beta |\mathrm{Tr}(u)|^2\,d\sigma
\ge \mu_0\|\nabla u\|_{L^2( \Omega)}^2 + a_0 \|\mathrm{Tr} (u)\|^2_{L^2(E_0,\sigma)}.
\end{equation}
Moreover, applying Lemma \ref{le2.6}, we conclude that there exists a positive constant $C$ depending only on both the
geometric constants for $(\Omega,\sigma)$ as in Assumption \ref{Ass} and $E_0$ such that, for any $u\in W^{1,2}(\Omega)$,
$$
\int_{\Omega} |u|^2\,dx  \le C \left[ \int_{\Omega} |\nabla u|^2\,dx + \int_{E_0} |\mathrm{Tr} (u)|^2\,d\sigma \right],
$$
which, combined with \eqref{eq2.9}, implies that, for any $u\in W^{1,2}(\Omega)$,
\begin{equation}\label{eq2.10}
B[u,u]\ge\min \{\mu_0, a_0\}\left[\|\nabla u\|_{L^2(\Omega)}^2 +\|u\|_{L^2(E_0,\sigma)}^2\right]\gtrsim \|u\|_{W^{1,2}(\Omega)}^2.
\end{equation}
		
Let $q_1\in(1,\infty)$ satisfy $\frac{1}{q_0} + \frac{2}{q_1} = 1$.  From the assumption that $q_0\in[s/(s+2-n),\infty]$ when $n\ge3$ and
$q_0\in(1,\infty]$ when $n=2$, we deduce that $q_1\in [2,2s/(n-2)]$ when $n\ge3$ and $q_1\in[2,\infty)$ when $n=2$. Then,
by Theorem \ref{th2.1}(iii), we find that, for any $u\in W^{1,2}(\Omega)$,
$\mathrm{Tr}(u)\in L^{q_1}(\partial\Omega,\sigma)$ and $\|\mathrm{Tr}(u)\|_{L^{q_1}(\partial \Omega,\sigma)} \lesssim\|u\|_{W^{1,2}(\Omega)}$,
which, together with H\"older's inequality, further implies that, for any $u,v\in W^{1,2}(\Omega)$,
\begin{align}\label{eq2.11}
|B[u,v]|&\le\left|\int_{\Omega}A\nabla u\cdot\nabla v\,dx\right|+\left| \int_{\partial\Omega}
\beta\mathrm{Tr}(u)\mathrm{Tr}(v)\,d\sigma\right| \notag\\
&\le \mu_0^{-1}\|u\|_{W^{1,2}(\Omega)}\|v\|_{W^{1,2}(\Omega)} + \|\beta\|_{L^{q_0}(\partial\Omega,\sigma)}
\|\mathrm{Tr}(u)\|_{L^{q_1}(\partial \Omega,\sigma)} \|\mathrm{Tr} (v) \|_{L^{q_1}(\partial \Omega,\sigma)}\notag\\
&\lesssim \left[\mu_0^{-1}+\|\beta\|_{L^{q_0}(\partial \Omega,\sigma)}\right] \|u\|_{W^{1,2}(\Omega)} \|v\|_{W^{1,2}(\Omega)}.
\end{align}
		
Let $p_0:=2s/(2s+2-n)$ when $n\ge3$ and $p_0\in(1,\infty)$ when $n=2$. From Theorem \ref{th2.1}(iii), it follows that
$ W^{1,2} (\Omega) \hookrightarrow L^{(p_0)'}(\partial \Omega,\sigma).$
Thus, by this, \eqref{eq2.10}, \eqref{eq2.11}, and the Lax--Milgram theorem (see, for instance, \cite[Theorem 5.8]{GT83}), we conclude that
there exists a unique $u\in W^{1,2}(\Omega)$ such that \eqref{eq2.7} holds for any $\varphi\in C_{\rm c}^\infty(\mathbb{R}^n)$ and \eqref{eq2.8} holds.
This finishes the proof of Theorem \ref{th2.2}.		
\end{proof}

Next, we recall the existence and uniqueness of the weak solution to the partial Neumann boundary value problem (see \cite[Theorem 2.10]{DDEMM24}),
which serves as a key tool in proving the local upper bound estimate of weak solutions to the Robin problem \eqref{Robin-Pro}.
\begin{lemma}\label{le2.7}	
Let $n\ge2$, $s\in(n-2,n)$, and $(\Omega, \sigma)$ satisfy Assumption \ref{Ass} with $\Omega$ being bounded.
Assume that $E\subset \partial \Omega$ is $\sigma$-measurable and $\sigma(E)>0$. Then, for any $g \in L^2(\partial \Omega,\sigma)$, there exists
a unique $u\in W^{1,2}(\Omega)$ with $\mathrm{Tr}(u)|_{E} = 0$ such that, for any $\varphi\in W^{1,2}(\Omega)$ with $\mathrm{Tr}(\varphi)|_{E} = 0$,
\begin{equation*}
\int_{\Omega}A\nabla u \cdot \nabla \varphi\,dx =\int_{\partial \Omega}g \varphi\,d\sigma.
\end{equation*}
\end{lemma}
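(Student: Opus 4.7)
The plan is to apply the Lax--Milgram theorem on the closed subspace
\[
V:=\{u\in W^{1,2}(\Omega):\mathrm{Tr}(u)|_{E}=0\}
\]
of $W^{1,2}(\Omega)$. To see that $V$ is indeed a closed subspace (hence a Hilbert space with the $W^{1,2}$-norm), I would use Lemma \ref{le2.2}: if $\{u_k\}_{k\in\mathbb{N}}\subset V$ converges to $u$ in $W^{1,2}(\Omega)$, then $\mathrm{Tr}(u_k)\to\mathrm{Tr}(u)$ in $H(\partial\Omega,\sigma)$ and in particular in $L^{2}(\partial\Omega,\sigma)$, so $\mathrm{Tr}(u)=0$ holds $\sigma$-a.e.\ on $E$. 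I then introduce the bilinear form $a(u,v):=\int_{\Omega}A\nabla u\cdot\nabla v\,dx$ and the linear functional $L(\varphi):=\int_{\partial\Omega}g\varphi\,d\sigma$ on $V$.

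The main step is to establish the coercivity of $a$ on $V$. Since $\Omega$ is bounded, fix $x_{0}\in\partial\Omega$ and $r_{0}\in(0,\infty)$ such that $\Omega\subset B(x_{0},r_{0})$ and $E\subset B(x_{0},r_{0})\cap\partial\Omega$. Applying Lemma \ref{le2.6} to any $u\in V$ with this ball and the given set $E$ (using $\sigma(E)>0$), and invoking $\mathrm{Tr}(u)\equiv 0$ on $E$, I obtain
\[
\int_{\Omega}|u|^{2}\,dx\lesssim\int_{\Omega}|\nabla u|^{2}\,dx,
\]
where the implicit constant depends on $\sigma(E)$ and the geometric constants for $(\Omega,\sigma)$. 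Combined with the ellipticity condition \eqref{eq1.2}, this yields
\[
a(u,u)\ge\mu_{0}\|\nabla u\|_{L^{2}(\Omega)}^{2}\gtrsim\|u\|_{W^{1,2}(\Omega)}^{2}\qquad\text{for any }u\in V,
\]
which is the desired coercivity.

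Continuity of $a$ follows immediately from \eqref{eq1.2} and the Cauchy--Schwarz inequality: $|a(u,v)|\le\mu_{0}^{-1}\|u\|_{W^{1,2}(\Omega)}\|v\|_{W^{1,2}(\Omega)}$. For the boundedness of $L$ on $V$, I would invoke Theorem \ref{th2.1}(iii); note that $s>n-2$ implies $\frac{2s}{n-2}>2$ when $n\ge 3$, while any finite exponent is admissible when $n=2$, so the trace operator is bounded from $W^{1,2}(\Omega)$ into $L^{2}(\partial\Omega,\sigma)$. Hence H\"older's inequality gives
\[
|L(\varphi)|\le\|g\|_{L^{2}(\partial\Omega,\sigma)}\|\mathrm{Tr}(\varphi)\|_{L^{2}(\partial\Omega,\sigma)}\lesssim\|g\|_{L^{2}(\partial\Omega,\sigma)}\|\varphi\|_{W^{1,2}(\Omega)}.
\]
The Lax--Milgram theorem applied on $V$ then yields a unique $u\in V$ with $a(u,\varphi)=L(\varphi)$ for every $\varphi\in V$, which is exactly the assertion. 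Since the test class in the statement is already $V$ (not just $C_{\rm c}^{\infty}$), no density argument is required. The only genuinely nontrivial ingredient is the Poincar\'e-type inequality on $V$, and this is supplied directly by Lemma \ref{le2.6}.
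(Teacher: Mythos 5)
Your proof is correct. The paper does not actually prove Lemma \ref{le2.7} (it is recalled from \cite[Theorem 2.10]{DDEMM24}), but your argument---Lax--Milgram on the closed subspace $V$, with closedness from Lemma \ref{le2.2}, coercivity from the Poincar\'e-type inequality of Lemma \ref{le2.6} applied with $\mathrm{Tr}(u)=0$ on $E$, and boundedness of the boundary functional from Theorem \ref{th2.1}(iii)---is precisely the scheme the paper itself uses for the Robin analogue in Theorem \ref{th2.2}, so it is the intended proof; the only cosmetic point is that Lemma \ref{le2.6} requires $r\le\mathrm{diam\,}(\Omega)$, so one should take $r_0=\mathrm{diam\,}(\Omega)$ (which already gives $B(x_0,r_0)\cap\Omega=\Omega$ and, up to intersecting $E$ with the ball around a suitable $x_0$, loses nothing) rather than an arbitrarily large radius.
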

	
Let $(\Omega,\sigma)$ satisfy Assumption \ref{Ass}. Then we have the following localization lemma (see, for instance, \cite[Lemma 2.6]{DDEMM24}),
which defines a localized domain related to $(\Omega,\sigma)$, called the \emph{tent domain}, that preserves NTA properties of $\Omega$.
	
\begin{lemma}\label{le2.8}
Let $n\ge2$, $s\in(n-2,n)$, and $(\Omega, \sigma)$ satisfy Assumption \ref{Ass} with $\Omega$ being bounded. Then, for any $y \in \partial\Omega$ and
$r \in (0,\mathrm{diam\,}(\Omega)]$, there exists a domain $(T(y,r), \sigma_*)$ satisfying Assumption \ref{Ass} and
\begin{equation*}
B(y, r)\cap \Omega\subset T(y,r)\subset B(y,K_0r)\cap\Omega,
\end{equation*}
where $K_0\in[1,\infty)$ is a constant depending on the geometric constants for $(\Omega,\sigma)$ as in Assumption \ref{Ass}.
Here, the boundary of $T(y,r)$, denoted by $\partial T(y,r)$, is equipped with the measure $\sigma_*$ defined by setting
\begin{equation*}
\sigma_* := \sigma|_{\partial T(y,r)\cap\partial\Omega}+[\delta(\cdot)]^{s-(n-1)}
\mathcal{H}^{n-1}|_{\partial T(y,r)\setminus \partial\Omega},
\end{equation*}
where, for any $z\in\partial T(y,r)\setminus\partial\Omega$, $\delta(z):=\mathrm{dist\,}(z,\partial T(y,r)\cap\partial\Omega)$
and $\mathcal{H}^{n-1}$ denotes the $(n-1)$-dimensional Hausdorff measure.
\end{lemma}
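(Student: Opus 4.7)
The plan is to construct $T(y,r)$ by means of a Whitney-type decomposition of $\Omega$, which is the standard device in the theory of (one-sided) NTA domains for producing localized subdomains that preserve the ambient geometric conditions. First I would fix a Whitney decomposition $\{Q_j\}_{j\in\mathbb{N}}$ of $\Omega$ into dyadic cubes satisfying $\mathrm{diam\,}(Q_j)\sim\mathrm{dist\,}(Q_j,\partial\Omega)$. Then, for a large constant $K_0\ge 1$ to be chosen depending only on the geometric constants for $(\Omega,\sigma)$, I would select the subfamily $\mathcal{F}$ of those Whitney cubes $Q_j$ satisfying $\mathrm{dist\,}(Q_j,B(y,r)\cap\partial\Omega)\le C\,\mathrm{diam\,}(Q_j)$ and $Q_j\subset B(y,K_0r/2)$, and let $T(y,r)$ be the interior of the union of slight enlargements of the cubes in $\mathcal{F}$. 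Passing to the connected component that contains an interior corkscrew point of $\Omega$ at scale $r$ based at $y$ guarantees that $T(y,r)$ is open and connected.

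For the inclusions, any $x\in B(y,r)\cap\Omega$ lies in some Whitney cube $Q_j$ with $\mathrm{diam\,}(Q_j)\le C\delta(x)\le Cr$, so that $Q_j$ automatically belongs to $\mathcal{F}$; conversely, every cube in $\mathcal{F}$ lies in $B(y,K_0r)$ by the selection criterion and the comparability of $\mathrm{diam\,}(Q_j)$ with $\delta$. The interior corkscrew condition for $T(y,r)$ then follows from the fact that corkscrew points for $\partial\Omega$ at sufficiently small scales automatically lie in selected Whitney cubes, while for any new boundary point $z\in\partial T(y,r)\setminus\partial\Omega$ one uses the center of a Whitney cube in $\mathcal{F}$ adjacent to $z$ as corkscrew. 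The Harnack chain condition for $T(y,r)$ is inherited from that of $\Omega$ by subdividing Harnack chains at the scale of the Whitney cubes and using the fact that the selection process is adapted to the Whitney geometry; cubes that would take a chain out of $T(y,r)$ can be bypassed at the cost of increasing $N$ by a controlled factor.

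The main obstacle is to verify the Ahlfors regularity of $\partial T(y,r)$ with respect to the hybrid measure $\sigma_*$. Writing $\Gamma_1:=\partial T(y,r)\cap\partial\Omega$ and $\Gamma_2:=\partial T(y,r)\setminus\partial\Omega$, the estimate on $\Gamma_1$ is immediate from the $s$-Ahlfors regularity of $\sigma$, because the selection rule makes $\Gamma_1$ essentially comparable to $B(y,Cr)\cap\partial\Omega$. On $\Gamma_2$, the weight $[\delta(\cdot)]^{s-(n-1)}$ is chosen precisely so that the $(n-1)$-dimensional faces of the cubes that make up $\Gamma_2$ carry a mass matching the $s$-dimensional scaling on the rest of $\partial T(y,r)$. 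For a surface ball $B(z,\rho)\cap\partial T(y,r)$ with $z\in\Gamma_2$ I would split into two regimes: if $\rho\le c\delta(z)$, then $\delta\sim\rho$ throughout the ball, and the standard bound $\mathcal{H}^{n-1}(B(z,\rho)\cap\Gamma_2)\sim\rho^{n-1}$ yields $\sigma_*(B(z,\rho)\cap\Gamma_2)\sim\rho^s$; if $\rho>c\delta(z)$, I would decompose $\Gamma_2\cap B(z,\rho)$ into annular pieces at dyadic distances $2^{-k}\rho$ from $\partial\Omega$, estimate each piece by the previous case, and sum a geometric series in $k$ (which converges precisely because $s>n-2$, so that $s-(n-1)>-1$), while the portion of $B(z,\rho)\cap\partial T(y,r)$ sitting inside $\partial\Omega$ contributes $\sim\rho^s$ by the Ahlfors regularity of $\sigma$. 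These contributions are easily seen to match, giving $\sigma_*(\partial T(y,r)\cap B(z,\rho))\sim\rho^s$ uniformly in $z$ and $\rho$, which completes the verification of Assumption \ref{Ass} for $(T(y,r),\sigma_*)$.
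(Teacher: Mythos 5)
The paper does not prove this lemma at all: it is imported verbatim from \cite[Lemma 2.6]{DDEMM24}, with the remark that the construction goes back to Hofmann--Martell for $s=n-1$ and was extended to $s\in(n-2,n)$ by Mayboroda--Poggi, David--Mayboroda, and David et al. Your Whitney/sawtooth outline is indeed the strategy used in those references, so the approach is the right one; the issue is that the one step you identify as ``the main obstacle'' is exactly where your argument has a genuine gap.

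Concretely, in the regime $\rho>c\,\delta(z)$ you decompose $\Gamma_2\cap B(z,\rho)$ into dyadic layers $\{w:\delta(w)\sim 2^{-k}\rho\}$ and claim the resulting series converges ``precisely because $s-(n-1)>-1$.'' That exponent condition is necessary but not sufficient. The contribution of the $k$-th layer is $M_k\cdot(2^{-k}\rho)^{s-(n-1)}$ with $M_k:=\mathcal{H}^{n-1}(\{w\in\Gamma_2\cap B(z,\rho):\delta(w)\sim 2^{-k}\rho\})$, and everything hinges on an upper bound for $M_k$, i.e.\ on counting the Whitney faces of side $\sim 2^{-k}\rho$ that actually lie on $\partial T(y,r)$. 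If you only use that these are faces of Whitney cubes of side $t=2^{-k}\rho$ lying within $B(z,C\rho)$ and at distance $\sim t$ from $\partial\Omega$, Ahlfors regularity of $\sigma$ gives $\sim(\rho/t)^s$ such cubes, hence $M_k t^{s-(n-1)}\sim(\rho/t)^s\,t^{n-1}\,t^{s-n+1}=\rho^s$ for every $k$, and the sum diverges no matter what $s$ is. The convergence with ratio $2^{-k(s-n+2)}$ requires the sharper packing estimate $M_k\lesssim \rho^{n-2}\,(2^{-k}\rho)$, i.e.\ that the faces contributing to $\partial T(y,r)\setminus\partial\Omega$ at scale $t$ are confined to a $Ct$-neighborhood of a codimension-one ``rim'' inside $\Gamma_2$ rather than spread over all of $B(z,\rho)\cap\partial\Omega$. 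Establishing this is the real content of the lemma (it is what forces the careful choice of the selected cube family and occupies most of the proof in \cite{HM14} and its generalizations), and it is asserted rather than proved in your sketch. A similar, though less serious, gloss occurs in the Harnack chain verification, where ``bypassing'' cubes that leave $T(y,r)$ again uses that the selected family is coherent (a chain between selected cubes can be rerouted through selected cubes of comparable size), which does not follow from the selection rule without further argument.
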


\begin{remark}
Let $(\Omega, \sigma)$ satisfy Assumption \ref{Ass}. It is worth pointing out that $\Omega\cap B(y,r)$ with some $y\in\partial\Omega$
and $r\in(0,\infty)$ may \emph{not} maintain the geometric assumption (A2)
because the \emph{interior corkscrew condition} may not be satisfied near
the boundary $\partial \Omega\cap \partial B(y,r)$. To overcome this problem, Hofmann and Martell \cite[Lemma 3.61]{HM14} constructed
the tent domain $T(y,r)$ when $s=n-1$. Subsequently, Maybroda and Poggi \cite[Sections 4--5]{MP21},
David and Maybroda \cite[Section 9]{DM23}, and David et al. \cite[Lemma 2.6]{DDEMM24} generalized
this result to the general case that $\partial\Omega$ is $s$-Ahlfors regular with $s\in(n-2,n)$.
\end{remark}
	
\section{Proof of Theorem \ref{th1.1}\label{S3}}

In this section, we establish the global H\"older regularity of the weak solution to
the Robin problem \eqref{Robin-Pro} and further give the proof of Theorem \ref{th1.1}.
We first prove the following conclusion.
	
\begin{proposition}\label{pro3.1}
Let $n\ge 2$, $s\in(n-2,n)$, and $(\Omega,\sigma)$ satisfy Assumption \ref{Ass} with $\Omega$ being bounded and let
$\beta$ be the same as in \eqref{eq1.1}.
Assume that $u$ is the weak solution to the Robin problem \eqref{Robin-Pro} with $f\in L^p(\partial\Omega,\sigma)$
for some $p\in (s/(s+2-n),\infty]$.
Then there exists a constant $\alpha_0 \in (0,1]$ depending only on the geometric
constants for $(\Omega,\sigma)$ and $\mu_0$ in \eqref{eq1.2} such that, for any given
$\alpha\in(0,\alpha_0)$, $u\in C^{0,\alpha}(\overline{\Omega})$ and
$
\|u\|_{C^{0,\alpha}(\overline{\Omega})} \le C\|f\|_{L^p(\partial\Omega,\sigma)},
$
where $C$ is a positive constant independent of both $u$ and $f$.
\end{proposition}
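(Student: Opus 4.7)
The plan is to prove Proposition \ref{pro3.1} by combining a global $L^{\infty}$ a priori bound (the forthcoming Lemma \ref{le3.1}) with an oscillation decay estimate for weak solutions to the local Robin problem with homogeneous boundary data (the forthcoming Lemma \ref{le3.2}), and then gluing these to an interior De Giorgi--Nash--Moser estimate. Concretely, the goal is to produce, for every $x_{0}\in\overline{\Omega}$ and every small $r$, an oscillation bound of the form $\mathrm{osc}_{B(x_{0},r)\cap\Omega}\,u \le C r^{\alpha_{0}}\|f\|_{L^{p}(\partial\Omega,\sigma)}$; a standard telescoping argument then converts such a decay into the Hölder norm for every $\alpha\in(0,\alpha_{0})$.

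For $x_{0}\in\Omega$ with $B(x_{0},2r)\subset\Omega$, classical interior De Giorgi--Nash--Moser theory applied to $-\mathrm{div}(A\nabla u)=0$ directly provides $\mathrm{osc}_{B(x_{0},r)}u \le Cr^{\alpha_{0}}\|u\|_{L^{\infty}(\Omega)}$ with $\alpha_{0}\in(0,1]$ depending only on $n$ and $\mu_{0}$; combined with Lemma \ref{le3.1}, this settles the interior case. For boundary points $x_{0}\in\partial\Omega$, I would work on the tent domain $T:=T(x_{0},r)$ from Lemma \ref{le2.8} (which is itself a one-sided NTA pair of $s$-dimension with uniform geometric constants) and split $u=v+w$ on $T$: the function $v$ is the weak solution of the local Robin problem on $T$ with data $0$ on $T\cap\partial\Omega$ and matching $u$ on $\partial T\setminus\partial\Omega$, while $w$ is the Robin solution on $T$ with data $f$ on $T\cap\partial\Omega$ and data $0$ on $\partial T\setminus\partial\Omega$; existence of both is secured by Theorem \ref{th2.2} and Lemma \ref{le2.7}.

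Applying Lemma \ref{le3.2} to $v$ yields an oscillation contraction $\mathrm{osc}_{T(x_{0},\theta r)}v \le \rho\,\mathrm{osc}_{T}v$ with universal $\theta,\rho\in(0,1)$. For $w$, I would apply Lemma \ref{le3.1} on $T$ and then use Hölder's inequality together with the $s$-Ahlfors regularity $\sigma(B(x_{0},K_{0}r)\cap\partial\Omega)\lesssim r^{s}$ to obtain $\|w\|_{L^{\infty}(T)}\lesssim r^{\gamma}\|f\|_{L^{p}(\partial\Omega,\sigma)}$, where the positivity of the exponent $\gamma:=(s+2-n)-s/p$ is exactly what the hypothesis $p>s/(s+2-n)$ guarantees. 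Combining these bounds produces the recursive oscillation inequality
\begin{equation*}
\mathrm{osc}_{T(x_{0},\theta r)}u \le \rho\,\mathrm{osc}_{T}u + Cr^{\gamma}\|f\|_{L^{p}(\partial\Omega,\sigma)},
\end{equation*}
whose standard dyadic iteration yields $\mathrm{osc}_{B(x_{0},r)\cap\Omega}u \le Cr^{\alpha_{0}}\bigl(\|u\|_{L^{\infty}(\Omega)}+\|f\|_{L^{p}(\partial\Omega,\sigma)}\bigr)$ for any $\alpha_{0}\in(0,\min\{\gamma,-\log_{\theta}\rho\}]$.

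A covering argument (Harnack chains in the interior, tent domains near $\partial\Omega$) combined with Lemma \ref{le3.1}, used to absorb $\|u\|_{L^{\infty}(\Omega)}$ into $\|f\|_{L^{p}(\partial\Omega,\sigma)}$, promotes this pointwise oscillation decay into the desired global estimate $\|u\|_{C^{0,\alpha}(\overline{\Omega})}\lesssim\|f\|_{L^{p}(\partial\Omega,\sigma)}$. The main obstacle I anticipate is keeping all constants uniform in $x_{0}$ and $r$ through the splitting and through the transition from $T(x_{0},r)$ to $B(x_{0},r)\cap\Omega$: this relies crucially on the fact from Lemma \ref{le2.8} that the tent domains inherit the geometric constants of $(\Omega,\sigma)$, so that Lemmas \ref{le3.1} and \ref{le3.2} apply to $v$ and $w$ with constants independent of $x_{0}$ and $r$, and on a careful verification that $f\in L^{p}(\partial T,\sigma_{*})$ with norm controlled by $\|f\|_{L^{p}(\partial\Omega,\sigma)}$ when extended by $0$ on $\partial T\setminus\partial\Omega$.
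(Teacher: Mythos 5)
Your proposal is correct and follows essentially the same route as the paper: a global $L^\infty$ bound and an $r^\theta$-small correction from Lemma \ref{le3.1}, oscillation decay for the homogeneous remainder from Lemma \ref{le3.2}, the standard iteration of the resulting recursive oscillation inequality, interior De Giorgi--Nash--Moser estimates, and a covering argument. The only (cosmetic) difference is that you perform the splitting $u=v+w$ on the tent domain $T(x_0,r)$, whereas the paper realizes the same decomposition directly on $B(x_0,r)\cap\Omega$ by taking the correction $\widetilde{u}$ in the space $W^{1,2}_{\rm c}(B(x_0,r)\cap\Omega)$, so that $v:=u-\widetilde{u}$ is globally defined and Lemma \ref{le3.2} applies verbatim.
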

	
We prove Proposition \ref{pro3.1} via applying an approach used in the proof of \cite[Theorem 4.2]{V12}.
We begin with recalling some necessary concepts. For any
$x_0\in \partial \Omega$ and $r\in (0,\mathrm{diam\,}(\Omega)]$,
the \emph{Sobolev space} $W^{1,2}_{\rm c}(B(x_0,r)\cap\Omega)$ is defined as the closure of
the space $W(B(x_0,r)\cap\Omega)$ in $W^{1,2}(\Omega)$, where the space $W(B(x_0,r)\cap\Omega)$ is defined
by setting
$$W(B(x_0,r)\cap\Omega):=\{u\phi: u\in W^{1,2}(\Omega), \phi\in C^\infty_{\rm c}(\mathbb{R}^n),
\mathrm{supp\,}(\phi)\subset B(x_0,r)\}. $$
Moreover, for any $u\in W^{1,2}_{\rm c}(B(x_0,r)\cap\Omega)$, define
$
\|u\|_{W^{1,2}_{\rm c}(B(x_0,r)\cap\Omega)}:=\|u\|_{W^{1,2}(\Omega)}.
$
Then it is easy to verify that $W^{1,2}_{\rm c}(B(x_0,r)\cap\Omega)$ is a closed subspace of $W^{1,2}(\Omega)$.
	
Let $\beta$ satisfy \eqref{eq1.1}. Define the new bilinear form $\widetilde{B}[\cdot,\cdot]$ by setting,
for any $u,v\in W^{1,2}_{\rm c}(B(x_0,r)\cap\Omega)$,
\begin{equation*}
\widetilde{B}[u,v] := \int_{B(x_0,r)\cap\Omega} A\nabla u\cdot \nabla v\,dx +
\int_{B(x_0,r)\cap\partial\Omega}\beta \mathrm{Tr}(u) \mathrm{Tr}(v)\,d\sigma.
\end{equation*}
Let $f\in L^p(B(x_0,r)\cap\partial\Omega,\sigma)$ for some $p\in (s/(s+2-n),\infty]$. A function $u\in W^{1,2}_{\rm c}
(B(x_0,r)\cap\Omega)$ is called a \emph{weak solution} to the Robin problem
\begin{equation}\label{eq3.1}
\begin{cases}
-\mathrm{div}(A\nabla u) = 0~~&\text{in}~B(x_0,r)\cap\Omega,\\
A\nabla u\cdot \boldsymbol{\nu} + \beta u = f~~&\text{on}~B(x_0,r)\cap\partial\Omega
\end{cases}
\end{equation}
if, for any $v\in W^{1,2}_{\rm c}(B(x_0,r)\cap\partial\Omega)$,
\begin{equation*}
\widetilde{B}[u,v]=\int_{B(x_0,r)\cap\partial\Omega}f\mathrm{Tr}(v)\,d\sigma.
\end{equation*}

Then we have the following a priori estimate for a weak solution to
the Robin problem \eqref{eq3.1}.
	
\begin{lemma}\label{le3.1}
Let $n\ge 2$, $s\in (n-2,n)$, $(\Omega,\sigma)$ satisfy Assumption \ref{Ass} with $\Omega$ being bounded, and $\beta$ be the same as in \eqref{eq1.1}.
Let $f\in L^p(\partial \Omega,\sigma)$ with $p\in (s/(s+2-n),\infty]$,
$x_0 \in \partial \Omega$, and $r\in (0,\mathrm{diam\,}(\Omega)/4]$.
\begin{itemize}
\item[{\rm (i)}] If $u\in W^{1,2}_{\rm c}(B(x_0,r)\cap\Omega)$ is a weak solution to the Robin problem \eqref{eq3.1},
then there exist constants $\theta, C\in(0,\infty)$ depending only on $n$, $p$, and the geometric constants for $(\Omega,\sigma)$ such that
\begin{equation}\label{eq3.2}
\|u\|_{L^{\infty}(B(x_0,r)\cap\Omega)}+\|\mathrm{Tr}(u)\|_{L^{\infty}(B(x_0,r)\cap\partial\Omega,\sigma)}
\le Cr^{\theta}\|f\|_{L^p(B(x_0,r)\cap\partial\Omega,\sigma)}.
\end{equation}
\item[{\rm (ii)}] If $u\in W^{1,2}(\Omega)$ is the weak solution to the Robin problem \eqref{Robin-Pro},
then there exists a positive constant $C$ depending only on $n$, $p$, and the geometric constants for $(\Omega,\sigma)$ such that
\begin{equation*}
\|u\|_{L^{\infty}(\Omega)}+\|\mathrm{Tr}(u)\|_{L^{\infty}(\partial\Omega,\sigma)}
\le C\|f\|_{L^p(\partial\Omega,\sigma)}.
\end{equation*}
\end{itemize}
\end{lemma}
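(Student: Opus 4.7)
The plan is to prove both assertions by a De Giorgi--Stampacchia level-set iteration adapted to the Robin boundary condition, following the philosophy of V\'elez-Santiago \cite{V12} and Daners \cite{D00}. For each $k \ge 0$, introduce the truncation $v_k := (u-k)_+$, which lies in the appropriate Sobolev class ($W^{1,2}_{\rm c}(B(x_0,r)\cap\Omega)$ for part (i) and $W^{1,2}(\Omega)$ for part (ii)). Testing the weak formulation against $v_k$, using the ellipticity \eqref{eq1.2} and the inequality $\beta\, \mathrm{Tr}(u)\, \mathrm{Tr}(v_k) \ge \beta\, (\mathrm{Tr}(v_k))^2$ (which holds $\sigma$-a.e.\ because $\beta \ge 0$ and $k \ge 0$), yields the energy inequality
\begin{align*}
\mu_0 \int_\Omega |\nabla v_k|^2\,dx + \int_{\partial\Omega} \beta\, (\mathrm{Tr}(v_k))^2\,d\sigma \le \int_{\partial\Omega} |f|\, \mathrm{Tr}(v_k)\, d\sigma.
\end{align*}

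Set $A_k := \{x \in \partial\Omega : \mathrm{Tr}(u)(x) > k\}$ and fix $q_1 := 2s/(n-2)$ when $n\ge 3$, and $q_1$ sufficiently large when $n=2$. The assumption $p > s/(s+2-n)$ is exactly what forces $q_1 > 2p'$, i.e., the Stampacchia superlinearity condition. H\"older's inequality on the right-hand side gives
\begin{align*}
\int_{\partial\Omega} |f|\, \mathrm{Tr}(v_k)\, d\sigma \le \|f\|_{L^p(\partial\Omega,\sigma)}\, \|\mathrm{Tr}(v_k)\|_{L^{q_1}(\partial\Omega,\sigma)}\, \sigma(A_k)^{1/p' - 1/q_1}.
\end{align*}
For part (i), since $v_k$ extends by zero across $\partial B(x_0,r) \cap \Omega$, the Sobolev--Poincar\'e inequality of Lemma \ref{le2.5}, together with the trace embedding of Theorem \ref{th2.1}(iii) applied on the tent domain $T(x_0,r)$ from Lemma \ref{le2.8}, produces
\[ \|\mathrm{Tr}(v_k)\|_{L^{q_1}(\partial\Omega,\sigma)} \le C\, r^{\eta}\, \|\nabla v_k\|_{L^2(\Omega)} \]
for an explicit scaling exponent $\eta \ge 0$ coming from the Ahlfors regularity. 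For part (ii), the coercivity supplied by $\beta \ge a_0$ on $E_0$ combined with Lemma \ref{le2.6} upgrades the gradient control to full $\|v_k\|_{W^{1,2}(\Omega)}$ control, and Theorem \ref{th2.1}(iii) then furnishes the global trace bound.

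Putting these ingredients together yields $\|\mathrm{Tr}(v_k)\|_{L^{q_1}(\partial\Omega,\sigma)} \le C\, r^{\eta}\, \|f\|_{L^p(\partial\Omega,\sigma)}\, \sigma(A_k)^{1/p' - 1/q_1}$, and Chebyshev applied to $A_h$ for $h > k$ delivers
\begin{align*}
\sigma(A_h) \le \frac{C\, r^{\eta q_1}\, \|f\|_{L^p(\partial\Omega,\sigma)}^{q_1}}{(h-k)^{q_1}}\, \sigma(A_k)^{q_1/p' - 1}.
\end{align*}
Since $q_1/p' - 1 > 1$, Stampacchia's lemma starting from $k_0 = 0$ with $\sigma(A_0) \le C r^s$ terminates at some level $k^* \lesssim r^\theta\, \|f\|_{L^p(\partial\Omega,\sigma)}$ for some $\theta > 0$, giving the $L^\infty$ bound on $(\mathrm{Tr}(u))_+$; the same argument applied to $-u$ handles $(\mathrm{Tr}(u))_-$. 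Finally, testing the weak formulation against $(u - k^*)_+$ (whose trace now vanishes) forces $\int_\Omega |\nabla(u-k^*)_+|^2\,dx = 0$, so $u \le k^*$ in $\Omega$, yielding the $L^\infty(\Omega)$ bound. The principal difficulty will be tracking the precise scaling exponent $r^\theta$ through the trace embedding applied on the tent domain and verifying that the associated constants depend only on the geometric data of $(\Omega,\sigma)$, not on $x_0$ or $r$.
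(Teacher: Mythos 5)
Your proposal is correct in its essentials, but it follows a genuinely different route from the paper. You run a De Giorgi--Stampacchia truncation scheme: test against $(u-k)_+$, derive the level-set decay $\sigma(A_h)\le C(h-k)^{-q_1}\sigma(A_k)^{q_1/p'-1}$, and invoke Stampacchia's lemma, with the superlinearity $q_1/p'-1>1$ being exactly equivalent to $p>s/(s+2-n)$. The paper instead performs a Moser iteration with the power test functions $\psi_{t,m}(u)$, deriving the recursive inequality \eqref{eq3.11} with exponent gain $\kappa=s/((n-2)p')>1$ (the same numerology in disguise) and passing to the limit along $k_{i+1}=1+\kappa k_i$. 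Your route is shorter and the exponent $\theta>0$ in \eqref{eq3.2} falls out cleanly from $\sigma(A_0)\lesssim r^s$ via the Ahlfors regularity, whereas the paper extracts it from the normalization in \eqref{eq3.14}--\eqref{eq3.15}; on the other hand, the paper's Moser scheme controls the interior and boundary $L^\infty$ norms simultaneously and produces the intermediate estimate \eqref{eq3.17} ($L^\infty$ in terms of $L^2(B(x_0,r)\cap\Omega)$ plus data), which is the form reused in Lemma \ref{le4.1} and Corollary \ref{c4.1}; your scheme recovers the interior bound only a posteriori through a weak maximum principle, which works here precisely because the interior equation is homogeneous. Two points you should make explicit when writing this up: (a) that $(u-k)_+\in W^{1,2}_{\rm c}(B(x_0,r)\cap\Omega)$ for $k\ge0$, i.e.\ that this closed subspace is stable under the truncation (the paper makes the analogous assertion for $\psi_{t,m}\circ u$ without proof, so this is a shared burden); and (b) in the final step, after $\nabla(u-k^*)_+=0$, conclude $(u-k^*)_+=0$ via Lemma \ref{le2.5} (vanishing trace on $\partial B(x_0,r)\cap\Omega$) rather than a connectedness argument, since $B(x_0,r)\cap\Omega$ need not be connected.
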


\begin{proof}
We prove Lemma \ref{le3.1} by borrowing some ideas from the proofs of \cite[Theorem 4.1]{D00} and \cite[Theorem 3.1]{V12}.
Here we only give the proof of (i) because the proof of (ii) is similar.
We now show (i) by considering the following two cases on the spatial dimension $n$.
		
\emph{Case (1)} $n\ge 3$. In this case, by the assumption that $u\in W^{1,2}_{\rm c}(B(x_0,r)\cap\Omega)$ is a weak solution to the
Robin problem \eqref{eq3.1}, we conclude that, for any non-negative function
$v\in W^{1,2}_{\rm c}(B(x_0,r)\cap\Omega)$,
\begin{equation}\label{eq3.3}
\widetilde{B}[u,v]=\int_{B(x_0,r)\cap\partial\Omega} f\mathrm{Tr}(v)\,d\sigma.
\end{equation}
For any given $t\in[1,\infty)$ and $m\in\mathbb{N}$, define the function $\psi_{t,m}:\mathbb{R} \to \mathbb{R}$ by setting,
for any $x\in\mathbb{R}$,
$$\psi_{t,m}(x): = \begin{cases}
0&\quad \text{if}~x\in(-\infty,0], \\
x^t&\quad \text{if}~x\in(0,m),\\
m^{t-1}x&\quad \text{if}~ x\in[m,\infty).
\end{cases}$$
Then it is easy to verify that $\psi_{t,m}\in C(\mathbb{R})$ and is piecewise smooth with bounded derivatives.
Thus, $\psi_{t,m}\circ u \in W_{\rm c}^{1,2}(B(x_0,r)\cap\Omega)$.
		
For any given $k\in [2,\infty)$ and $m\in\mathbb{N}$, let $w_m:= \psi_{k/2,m}(u)$ and $v_m := \psi_{k-1,m} (u)$ for $u$ as in (i). Then
$w_m,v_m \in W_{\rm c}^{1,2}(B(x_0,r)\cap\Omega)$. A direct calculation yields, for any $i,j\in\{1,\ldots,n\}$ and $m\in\mathbb{N}$,
\begin{enumerate}
\item[(i)] when $0\le u \le m$, $\partial_j w_m \partial_i w_m = \frac{k^2}{4(k-1)} \partial_j u
\partial_iv_m$, $w_m \partial_iw_m = \frac{k}{2} v_m\partial_iu= \frac{k}{2(k-1)}
u \partial_i v$, and $w^2_m = u v_m$,
			
\item[(ii)] when $u \ge m$, $\partial_jw_m \partial_i w_m = \partial_j u \partial_i v_m$,
$w_m \partial_i w_m = v_m \partial_i u = u \partial_i v_m,$ and $w_m^2 = u v_m$.
\end{enumerate}
Using this and \eqref{eq1.2}, we conclude that
$$
\|\nabla w_m\|_{L^2(B(x_0,r)\cap\Omega)}^2 \le \mu_0^{-1}\int_{B(x_0,r)\cap\Omega} A\nabla w_m\cdot
\nabla w_m\,dx \le \mu_0^{-1} \gamma_k \int_{B(x_0,r)\cap\Omega}A \nabla u\cdot \nabla v_m\,dx,
$$
where $\gamma_k:= \frac{k^2}{4(k-1)} \ge 1$, which further implies that
\begin{equation}\label{eq3.4}
\|w_m\|^2_{W_{\rm c}^{1,2}(B(x_0,r)\cap\Omega)} \le \mu_0^{-1} \gamma_k \widetilde{B}[u,v_m] +
\|w_m\|_{L^2(B(x_0,r)\cap\Omega)}^2.
\end{equation}
Then, from \eqref{eq3.4}, Lemma \ref{le2.1}, and Theorem \ref{th2.1}(ii) for the case $n\ge 3$, it follows that
\begin{align}\label{eq3.5}
&\|w_m\|^2_{L^{\frac{2n}{n-2}}(B(x_0,r)\cap\Omega)}+\left\|\mathrm{Tr}(w_m)\right\|^2_{L^{\frac{2s}{n-2}}
(B(x_0,r)\cap\partial\Omega,\sigma)} \notag\\
&\quad\lesssim\|w_m\|^2_{W_{\rm c}^{1,2}(B(x_0,r)\cap\Omega)}\lesssim
\mu_0^{-1} \gamma_k \int_{B(x_0,r)\cap\partial\Omega} f\mathrm{Tr}
(v_m)\,d\sigma + \|w_m\|_{L^2(B(x_0,r)\cap\Omega)}^2.
\end{align}
By the definitions of $v_m$ and $w_m$, we find that the pointwise inequality $v_m \le w_m^{2(q-1)/q}$ holds for any $q\in[2,\infty)$,
which, combined with H\"older's inequality, implies that
\begin{align}\label{eq3.6}
\int_{B(x_0,r)\cap\partial\Omega} f\mathrm{Tr}(v_m)\,d\sigma \le & \|f\|_{L^p(B(x_0,r)\cap\partial\Omega,\sigma)}\|\mathrm{Tr}(v_m)\|_{L^{p'}(B(x_0,r)\cap\partial\Omega,\sigma)} \notag\\
\le & \|f\|_{L^p(B(x_0,r)\cap\partial\Omega,\sigma)}\left\|\mathrm{Tr}\left(w_m^{2/k}\right)
\right\|^{k-1}_{L^{p'(k-1)}(B(x_0,r)\cap\partial\Omega,\sigma)},
\end{align}
where $p'\in(1,\infty)$ denotes the conjugate index of $p$.
Let $\zeta\in(1,\infty)$ satisfy $2/\zeta=1/2+s/(np')$. Then $\zeta\in(1,2n/[n-1])$. Take
$p_1\in(\max\{2,\zeta\},2n/[n-2])$. Applying the fact that $\Omega$ is bounded, H\"older's inequality, and the interpolation theorem
(see, for instance, \cite[(7.9)]{GT83}), we obtain
\begin{align}\label{eq3.7}
\|w_m\|_{L^2(B(x_0,r)\cap\Omega)} \lesssim\|w_m\|_{L^{p_1}(B(x_0,r)\cap\Omega)}\lesssim
\|w_m\|^{(1-\theta)}_{L^{\frac{2n}{n-2}}(B(x_0,r)\cap\Omega)} \|w_m\|_{L^{\zeta}(B(x_0,r)\cap\Omega)}^{\theta},
\end{align}
where $\theta\in (0,1)$ is given by
$$
\frac{1}{p_1} = \frac{\theta}{\zeta}+\frac{1-\theta}{\frac{2n}{n-2}}.
$$
Then, from \eqref{eq3.7}, Lemma \ref{le2.1}, and Young's inequality (see, for instance,  \cite[(7.10)]{GT83}), we infer that
there exists a positive constant $C$ such that, for any given $\varepsilon\in(0,\infty)$,
\begin{align}\label{eq3.8}
\|w_m\|_{L^2(B(x_0,r)\cap\Omega)}^2 \le C \varepsilon \|w_m\|_{ W_{\rm c}^{1,2}(B(x_0,r)\cap\Omega) }^2+
C_{(\varepsilon)}\|w_m\|_{L^{\zeta}( B(x_0,r)\cap\Omega)}^2,
\end{align}
where $C_{(\varepsilon)}$ is a positive constant depending on $\varepsilon$.
Let $u^+:=\max\{u,0\}$. Furthermore, by the definition of $w_m$, we conclude that the pointwise inequality $w_m^{2/k}\le u^+$
holds, which, together with H\"older's inequality, further implies that
\begin{align*}
\|w_m\|_{L^{\zeta}( B(x_0,r)\cap\Omega)}^2 &=\left\|w_m^{2/k + 2(k-1)/k} \right\|_{L^{\frac{\zeta}{2}}(B(x_0,r)\cap\Omega)}\\
&\le \left\|w_m^{2/k}\right\|_{L^2( B(x_0,r)\cap\Omega)}\left\|w_m^{2(k-1)/k}\right\|_{L^{\frac{np'}{s}}(B(x_0,r)\cap\Omega)}\\
&\le \|u^+\|_{L^2(B(x_0,r)\cap\Omega)}\left\|w_m^{2/k}\right\|^{k-1}_{L^{\frac{np'(k-1)}{s}}(B(x_0,r)\cap\Omega)}.
\end{align*}
This, combined with \eqref{eq3.8}, \eqref{eq3.5}, and \eqref{eq3.6}, further implies that, for any given $\varepsilon\in(0,\infty)$,
\begin{align}\label{eq3.9}
&\|w_m\|_{L^{2}(B(x_0,r)\cap\Omega)}^2\notag\\
& \quad \le C_{(\varepsilon)}\|u^+\|_{L^{2}( B(x_0,r)\cap\Omega)} \left\|w_m^{2/k} \right\|^{k-1}_{L^{\frac{np'(k-1)}{s}}(B(x_0,r)\cap\Omega)} +
C \varepsilon \|w_m\|^2_{W^{1,2}_{\rm c}( B(x_0,r)\cap\Omega)}\notag\\
&\quad \le  C_{(\varepsilon)}\|u^+\|_{L^{2}(B(x_0,r)\cap\Omega)} \left\|w_m^{2/k}\right
\|^{k-1}_{L^{\frac{np'(k-1)}{s}}(B(x_0,r)\cap\Omega)} \notag\\
&\qquad+ C\varepsilon\left[ \mu_0^{-1} \gamma_k \|f\|_{L^{p}(B(x_0,r)\cap\partial\Omega,\sigma)}
\left\|\mathrm{Tr}\Big(w_m^{2/k}\Big)\right\|^{k-1}_{L^{p'(k-1)}(B(x_0,r)\cap\partial\Omega,\sigma)}
+ \|w_m\|^2_{L^2(B(x_0,r)\cap\Omega)} \right]\notag\\
&\quad \le C_{(\varepsilon)}\gamma_k\left[\|u^+\|_{L^{2}(B(x_0,r)\cap\Omega)} +
\|f\|_{L^{p}(B(x_0,r)\cap\partial\Omega,\sigma)} \right]\notag\\
&\qquad\times\left[ \left\|w_m^{2/k}\right\|^{k-1}_{L^{\frac{np'(k-1)}{s}}(B(x_0,r)\cap\Omega)} + \left\|\mathrm{Tr}\left(w_m^{2/k}\right)
\right\|^{k-1}_{L^{p'(k-1)}(B(x_0,r)\cap\partial\Omega,\sigma)}\right]
+C\varepsilon \|w_m\|^2_{L^2(B(x_0,r)\cap\Omega)}.
\end{align}
Taking $\varepsilon$ small enough such that $C\varepsilon=1/2$ in \eqref{eq3.9} and then applying \eqref{eq3.5},
\eqref{eq3.6}, and \eqref{eq3.9}, we conclude that
\begin{align}\label{eq3.10}
&\|w_m\|^2_{L^{\frac{2n}{n-2}}(B(x_0,r)\cap\Omega)}+\left\|\mathrm{Tr}(w_m)
\right\|^2_{L^{\frac{2s}{n-2}}(B(x_0,r)\cap\partial\Omega,\sigma)} \notag\\
&\quad = \left\|w_m^{2/k}\right\|^k_{L^{\frac{nk}{n-2}}(B(x_0,r)\cap\Omega)} + \left\|\mathrm{Tr}\left(w_m^{2/k}\right)
\right\|^k_{L^{\frac{sk}{n-2}}(B(x_0,r)\cap\partial\Omega,\sigma)} \notag\\
&\quad \le C \gamma_k \left[\|f\|_{L^p(B(x_0,r)\cap\partial\Omega,\sigma)} + \|u^+\|_{L^2(B(x_0,r)\cap\Omega)}\right] \notag\\
&\qquad\times \left[\left\|w_m^{2/k}\right\|^{k-1}_{L^{\frac{np'(k-1)}{s}}(B(x_0,r)\cap\Omega)} + \left\|\mathrm{Tr}\left(w_m^{2/k}\right)
\right\|^{k-1}_{L^{p'(k-1)}(B(x_0,r)\cap\partial\Omega,\sigma)} \right].
\end{align}
Let
$$M:=C\left[\|f\|_{L^p(B(x_0,r)\cap\partial\Omega,\sigma)}+\|u^+\|_{L^2(B(x_0,r)\cap\Omega)}\right].$$
We first assume that $M\in(0,\infty)$.
Taking $\widetilde{u}:= u^+/M$ and letting $m\to \infty$
in \eqref{eq3.10}, and then applying the monotone convergence theorem, we further obtain
\begin{align}\label{eq3.11}
&\|\widetilde{u}\|^k_{L^{\frac{nk}{n-2}}(B(x_0,r)\cap\Omega)} + \|\mathrm{Tr}(\widetilde{u})\|^k_{L^{\frac{sk}{n-2}}
(B(x_0,r)\cap\partial\Omega,\sigma)} \notag \\
&\quad \le k^2\left[\|\widetilde{u}\|^{k-1}_{L^{\frac{np'(k-1)}{s}}(B(x_0,r)\cap\Omega)}+\|\mathrm{Tr}(\widetilde{u})
\|^{k-1}_{L^{p'(k-1)}(B(x_0,r)\cap\partial\Omega,\sigma)}\right].
\end{align}
		
Next, we devise an iterative scheme. Let
$\kappa:= \frac{s}{(n-2)p'}= \frac{s(p-1)}{(n-2)p}.$
Since $p\in(s/(s+2-n),\infty]$, it follows that $\kappa\in(1,\infty)$. Define the increasing sequence $\{k_i\}_{i=0}^\infty$ in $[2,\infty)$
by setting $k_0:=2$ and, for any $i\in\mathbb{Z}_+$,
$k_{i+1}:=1+\kappa k_i.$
Then, applying induction to \eqref{eq3.11} and using the fact that $p'\kappa k_i \le \frac{sk_i}{n-2}$,
we conclude that, for any $i\in\mathbb{Z}_+$,
\begin{align}\label{eq3.12}
&\|\widetilde{u}\|^{k_{i+1}}_{L^{\frac{nk_{i+1}}{n-2}} (B(x_0,r)\cap\Omega)}+
\|\mathrm{Tr}(\widetilde{u})\|^{k_{i+1}}_{L^{\frac{sk_{i+1}}{n-2}} (B(x_0,r)\cap\partial\Omega,\sigma)} \notag \\
&\quad \le  k_{i+1}^2 \left[\|\widetilde{u}\|^{{\kappa k_i}}_{L^{\frac{np'\kappa k_i}{s}}(B(x_0,r)\cap\Omega)}
+\|\mathrm{Tr}(\widetilde{u})\|^{\kappa k_i}_{L^{p'\kappa k_i}(B(x_0,r)\cap\partial\Omega,\sigma)} \right] \notag \\
&\quad \le Ck_{i+1}^2 \left[\|\widetilde{u}\|^{{\kappa k_i}}_{L^{\frac{n \kappa k_i}{n-2}} ( B(x_0,r)\cap\Omega)}
+ \|\mathrm{Tr}(\widetilde{u})\|^{\kappa k_i}_{L^{\frac{s\kappa k_i}{n-1}}(B(x_0,r)\cap\partial\Omega,\sigma)} \right] \notag \\
&\quad \le C\prod\limits_{j=1}^{i+1} k_j ^{2\kappa^{i-j+1}} \left[\|\widetilde{u}\|^{2\kappa^{i+1}}_{L^{\frac{2n}{n-2}}(B(x_0,r)\cap\Omega)}
+ \|\mathrm{Tr}(\widetilde{u})\|^{2\kappa^{i+1}}_{L^{\frac{2s}{n-2}}(B(x_0,r)\cap\partial\Omega,\sigma)} \right].
\end{align}
Notice that, for any $i\in\mathbb{Z}_+$, $\kappa \le
\frac{k_{i+1}}{k_i} \le 2\kappa$. This further yields that,
for any $i\in\mathbb{Z}_+$, $\kappa^i \le k_i \le (2\kappa)^i$ and
$ k_{i+1}=\kappa^{i+1}+\sum_{j=0}^{i+1}\kappa^j.$
Thus, from this and \eqref{eq3.12}, we deduce that, for any $i\in\mathbb{Z}_+$,
\begin{align}\label{eq3.13}
&\left[\|\widetilde{u}\|^{k_{i+1}}_{L^{\frac{n{k_{i+1}}}{n-2}}(B(x_0,r)\cap\Omega)}
+\|\mathrm{Tr}(\widetilde{u})\|^{k_{i+1}}_{L^{\frac{s{k_{i+1}}}{n-2}}
(B(x_0,r)\cap\partial\Omega,\sigma)}\right]^{\frac1{k_{i+1}}} \notag \\
&\quad \le C(2\kappa)^{2\sum_{j=1}^{i+1} j\kappa^{-j}} \left[\|\widetilde{u}\|^{2(1+\sum_{j=0}^{i+1}
\kappa^{-j})^{-1}}_{L^{\frac{2n}{n-2}}(B(x_0,r)\cap\Omega)} + \|\mathrm{Tr}(\widetilde{u})\|^{2(1+ \sum_{j=0}^{i+1}
\kappa^{-j})^{-1}}_{L^{\frac{2s}{n-2}}(B(x_0,r)\cap\partial\Omega,\sigma)} \right].
\end{align}
Using the fact that $\kappa\in(1,\infty)$ and letting $i\to\infty$ in \eqref{eq3.13}, we find that $\widetilde{u}\in L^\infty(B(x_0,r)\cap\Omega)$,
$\mathrm{Tr}(\widetilde{u})\in L^{\infty}(B(x_0,r)\cap\partial\Omega,\sigma)$, and
$$
\|\widetilde{u}\|_{L^{\infty}(B(x_0,r)\cap\Omega)} + \|\mathrm{Tr}(\widetilde{u})\|_{L^{\infty}(B(x_0,r)\cap\partial\Omega,\sigma)}
\le C_{\kappa}\left[\|\widetilde{u}\|^{\delta_\kappa}_{L^{\frac{2n}{n-2}}(B(x_0,r)\cap\Omega)}
+\|\mathrm{Tr}(\widetilde{u})
\|^{\delta_\kappa}_{L^{\frac{2s}{n-2}}(B(x_0,r)\cap\partial\Omega,\sigma)}\right],
$$
where $C_{\kappa}:=C(2\kappa)^{2\sum_{j=1}^{\infty}j\kappa^{-j} }<\infty$
and $\delta_\kappa:= 1-\frac{1}{2\kappa -1}\in (0,1)$, which further implies that
\begin{align}\label{eq3.14}
&\|\widetilde{u}\|_{L^{\infty}(B(x_0,r)\cap\Omega)}+\|\mathrm{Tr}(\widetilde{u})\|_{L^{\infty}(B(x_0,r)\cap\partial\Omega,\sigma)}\notag \\
&\quad\le C_{\kappa}\left\{|B(x_0,r)\cap\Omega|^{\frac{(n-2)\delta_\kappa}{2n}}
\|\widetilde{u}\|_{L^{\infty}(B(x_0,r)\cap\Omega)}^{\delta_\kappa}\right.\notag \\
&\qquad\left.+ [\sigma(B(x_0,r)\cap\partial\Omega)]^{\frac{(n-2)\delta_\kappa}{2s}} \|\mathrm{Tr}(\widetilde{u})\|_{L^{\infty}(B(x_0,r)\cap\partial\Omega,\sigma)}^{\delta_\kappa} \right\}.
\end{align}
Thus, by \eqref{eq3.14}, we conclude that
\begin{align}\label{eq3.15}
&\|\widetilde{u}\|_{L^{\infty}(B(x_0,r)\cap\Omega)} + \|\mathrm{Tr}(\widetilde{u})\|_{L^{\infty}(B(x_0,r)\cap\partial\Omega,\sigma)} \notag \\
&\quad \le C \left\{|B(x_0,r)\cap\Omega|^{\frac{n-2}{2n}\frac{\delta_\kappa}{1-\delta_\kappa}}
+ [\sigma(B(x_0,r)\cap\partial\Omega)]^{\frac{n-2}{2s}
\frac{\delta_\kappa}{1-\delta_\kappa}}\right\} \le C r^{\frac{n-2}{2}\frac{\delta_\kappa}{1-\delta_\kappa}}.
\end{align}
Let $\theta:= \frac{n-2}{2}\frac{\delta_\kappa}{1-\delta_\kappa}$. From \eqref{eq3.15} and the definition of $\widetilde{u}$,
it follows that
\begin{align}\label{eq3.16}
&\|u^+\|_{L^{\infty}(B(x_0,r)\cap\Omega)} + \|\mathrm{Tr}(u^+)\|_{L^{\infty}(B(x_0,r)\cap\partial\Omega,\sigma)}\lesssim r^{\theta}
\left[\|f\|_{L^p(B(x_0,r)\cap\partial\Omega,\sigma)}+\|u^+\|_{L^2(B(x_0,r)\cap\Omega)}\right].
\end{align}

Moreover, if $M=0$, by the definition of $M$, we find that $\|u^+\|_{L^2(B(x_0,r)\cap\Omega)}=0$ in this case, which,
together with the definition of $\mathrm{Tr}(u^+)$, further implies that, in this case,
$\|u^+\|_{L^{\infty}(B(x_0,r)\cap\Omega)}=0=\|\mathrm{Tr}(u^+)
\|_{L^{\infty}(B(x_0,r)\cap\partial\Omega,\sigma)}$.
Thus, when $M=0$, \eqref{eq3.16} also holds.

Let $u^{-}:=\max\{-u,0\}$. Replacing $u$ and $f$ in \eqref{eq3.3} respectively by $-u$ and $-f$, and repeating
the proof of \eqref{eq3.16}, we obtain
\begin{align*}
&\|u^-\|_{L^{\infty}(B(x_0,r)\cap\Omega)} + \|\mathrm{Tr}(u^-)\|_{L^{\infty}(B(x_0,r)\cap\partial\Omega,\sigma)}\lesssim r^{\theta}
\left[\|f\|_{L^p(B(x_0,r)\cap\partial\Omega,\sigma)}+\|u^-\|_{L^2(B(x_0,r)\cap\Omega)}\right],
\end{align*}
which, combined with \eqref{eq3.16}, implies that
\begin{align}\label{eq3.17}
\|u\|_{L^{\infty}(B(x_0,r)\cap\Omega)} + \|\mathrm{Tr}(u)\|_{L^{\infty}(B(x_0,r)\cap\partial\Omega,\sigma)}\lesssim r^{\theta}
\left[\|f\|_{L^p(B(x_0,r)\cap\partial\Omega,\sigma)}+\|u\|_{L^2(B(x_0,r)\cap\Omega)}\right].
\end{align}

Furthermore, from $u\in W^{1,2}_{\rm c}(B(x_0,r)\cap\Omega)$ and the definition of the space $W^{1,2}_{\rm c}(B(x_0,r)\cap\Omega)$,
we deduce that $\mathrm{Tr}(u)|_{\partial B(x_0,r)\cap\Omega}=0$, which, combined with Lemma \ref{le2.5}, further implies that
$\|u\|_{W^{1,2}_{\rm c}(B(x_0,r)\cap\Omega)}\sim\|\nabla u\|_{L^2(B(x_0,r)\cap\Omega))}$.
By this, \eqref{eq3.3}, $p>s/(s+2-n)\ge2s/(2s+2-n)$, and Theorem \ref{th2.1}(iii), we find that
\begin{align*}
\|u\|^2_{W^{1,2}_{\rm c}(B(x_0,r)\cap\Omega)}&\lesssim\|\nabla u\|^2_{L^2(B(x_0,r)\cap\Omega))}
\lesssim\|f\|_{L^p(B(x_0,r)\cap\partial\Omega,\sigma)}
\|\mathrm{Tr}(u)\|_{L^{p'}(\partial\Omega,\sigma)}\\
&\lesssim\|f\|_{L^p(B(x_0,r)\cap\partial\Omega,\sigma)}\|u\|_{W^{1,2}_{\rm c}(B(x_0,r)\cap\Omega)},
\end{align*}
which further implies that
$$
\|u\|_{L^2(B(x_0,r)\cap\Omega)}\le\|u\|_{W^{1,2}_{\rm c}(B(x_0,r)\cap\Omega)}\lesssim\|f\|_{L^p(B(x_0,r)\cap\partial\Omega,\sigma)}.
$$
This, together with \eqref{eq3.17}, implies that
\begin{align*}
\|u\|_{L^{\infty}(B(x_0,r)\cap\Omega)}+\|\mathrm{Tr}(u)\|_{L^{\infty}
(B(x_0,r)\cap\partial\Omega,\sigma)}\lesssim r^{\theta}
\|f\|_{L^p(B(x_0,r)\cap\partial\Omega,\sigma)}.
\end{align*}
Therefore, \eqref{eq3.2} holds in the case $n\ge3$.

\emph{Case (2)} $n = 2$. In this case, using the conclusions of Lemma \ref{le2.1} and Theorem \ref{th2.1}(iii)
in the case of $n=2$, similar to the proof of \eqref{eq3.11}, we conclude that, for any given $\widetilde{p}_1,\widetilde{p}_2\in(p',\infty)$,
\begin{align}\label{eq3.18}
&\|\widetilde{u}\|^k_{L^{\widetilde{p}_1k} (B(x_0,r)\cap\Omega)} +
\|\mathrm{Tr}(\widetilde{u})\|^k_{L^{\widetilde{p}_2k} (B(x_0,r)\cap\partial\Omega,\sigma)} \notag \\
&\quad \le k^2\left[\|\widetilde{u}\|^{k-1}_{L^{p'(k-1)}( B(x_0,r)\cap\Omega)}+\|\mathrm{Tr}(\widetilde{u})\|^{k-1}_{L^{p'(k-1)}
(B(x_0,r)\cap\partial\Omega,\sigma)}\right].
\end{align}
Then, repeating the proof of Case (1) with \eqref{eq3.11} replaced by \eqref{eq3.18},
we further find that \eqref{eq3.2} also holds in this case. This
finishes the proof of Lemma \ref{le3.1}.
\end{proof}

Furthermore, similar to the proof of \cite[Lemma 4.1]{V12} (see also \cite[Example 6.6]{V12}),
we obtain the following oscillation estimates for weak solutions to the Robin problem \eqref{Robin-Pro} with $f\equiv0$
on $B(x_0,r)\cap\partial\Omega$ for any given $x_0\in\partial\Omega$ and $r\in(0,\mathrm{diam\,}(\Omega))$.
	
\begin{lemma}\label{le3.2}
Let $n\ge2$, $s\in(n-2, n)$, and $(\Omega,\sigma)$ satisfy Assumption \ref{Ass} with $\Omega$ being bounded.
Let $\beta$ be the same as in \eqref{eq1.1},
$x_0 \in \partial \Omega$, and $r\in (0,\mathrm{diam}\,(\Omega))$. Assume that $u\in W^{1,2}(\Omega)$
is bounded on $\overline{\Omega}$ and satisfies
that, for any $\varphi\in W^{1,2}_{\rm c}(B(x_0,r)\cap\Omega)$,
\begin{equation}\label{eq3.19}
B[u,\varphi]=0.
\end{equation}
Then there exists a constant $\eta\in (0,1)$, independent of $u$, $x_0$,
and $r$, such that
\begin{equation}\label{eq3.20}
\mathop{\mathrm{osc}}_{B(x_0,r/4)\cap\overline{\Omega}}
u\le \eta \mathop{\mathrm{osc}}_{B(x_0,r)\cap\overline{\Omega}}u,
\end{equation}
where, for any measurable set $E\subset\overline{\Omega}$, 
$$\mathop{\mathrm{osc}}_{E}u:=
\mathop{\mathrm{ess\,sup}}_{x\in E}u(x)-
\mathop{\mathrm{ess\,inf}}_{x\in E}u(x).$$
\end{lemma}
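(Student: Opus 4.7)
The plan is to follow a De Giorgi level-set iteration, in the spirit of V\'elez-Santiago \cite[Lemma 4.1]{V12}. Set $M := \mathop{\mathrm{ess\,sup}}_{B(x_0, r) \cap \overline{\Omega}} u$, $m := \mathop{\mathrm{ess\,inf}}_{B(x_0, r) \cap \overline{\Omega}} u$, and $d := M - m$. Because the identity $B[u, \varphi] = 0$ is invariant under $u \mapsto -u$, I may assume without loss of generality that $M + m \ge 0$; in particular $M \ge 0$.

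First I would derive two Caccioppoli-type inequalities. Testing $B[u, \varphi] = 0$ with $\varphi = (u - k)_+ \zeta^2$ for a cutoff $\zeta \in C_{\rm c}^\infty(B(x_0, r))$ with $0 \le \zeta \le 1$ and any level $k \ge 0$, I exploit ellipticity \eqref{eq1.2}, the pointwise bound $u (u - k)_+ \ge (u - k)_+^2 \ge 0$ on $\{u > k\}$ (valid precisely because $k \ge 0$), and $\beta \ge 0$ to drop the non-negative boundary term from the left-hand side and obtain
\begin{equation*}
\int_\Omega |\nabla (u - k)_+|^2 \zeta^2 \, dx \lesssim \int_\Omega (u - k)_+^2 |\nabla \zeta|^2 \, dx.
\end{equation*}
An analogous Caccioppoli inequality for truncations $(w - k)_+$ of the companion $w := M - u \ge 0$, which satisfies $B[w, \varphi] = M \int_{\partial \Omega} \beta \varphi \, d\sigma$, requires absorbing the boundary source term $(M - k) \int_{\partial \Omega} \beta (w - k)_+ \zeta^2 \, d\sigma$; this is done via H\"older on $\partial \Omega$ (exploiting $\beta \in L^{q_0}$), the trace embedding in Theorem \ref{th2.1}(iii), and the Poincar\'e-trace inequality in Lemma \ref{le2.4}, producing a perturbation on the right-hand side that is independent of $k$.

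I would then carry out the standard De Giorgi iteration on a geometric sequence of levels, combining the Caccioppoli estimates with the Poincar\'e--Sobolev inequality in Lemma \ref{le2.3}, to establish the following dichotomy with $k_0 := (M + m)/2 \ge 0$. If $|\{u > k_0\} \cap B(x_0, r/2) \cap \Omega| \le \tfrac{1}{2} |B(x_0, r/2) \cap \Omega|$, iteration on $(u - k_j)_+$ along an increasing sequence of levels $k_j \nearrow k_\infty$ in $[k_0, M]$ yields $\mathop{\mathrm{ess\,sup}}_{B(x_0, r/4) \cap \Omega} u \le M - c d$ for some absolute constant $c \in (0, \tfrac{1}{2})$. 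In the complementary case, $|\{w \ge d/2\} \cap B(x_0, r/2) \cap \Omega| \le \tfrac{1}{2} |B(x_0, r/2) \cap \Omega|$, and the analogous iteration on $(w - k_j)_+$ yields $\mathop{\mathrm{ess\,inf}}_{B(x_0, r/4) \cap \Omega} u \ge m + c d$. In either case, $\mathop{\mathrm{osc}}_{B(x_0, r/4) \cap \overline{\Omega}} u \le (1 - c) d$, which is \eqref{eq3.20} with $\eta := 1 - c \in (0, 1)$.

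The principal obstacle will be the uniform-in-$k$ control of the boundary source $(M - k) \int_{\partial \Omega} \beta (w - k)_+ \zeta^2 \, d\sigma$ in the Caccioppoli inequality for $w$: since $\beta$ is only in $L^{q_0}(\partial \Omega, \sigma)$ with $q_0$ as in \eqref{eq1.1} (rather than in $L^\infty$), absorption into the Dirichlet energy requires a careful combination of H\"older on $\partial \Omega$, the trace embedding in Theorem \ref{th2.1}(iii), and Lemma \ref{le2.4}; crucially, one must ensure that the resulting perturbation is dominated by $(M - k)^2$ at the relevant levels, which is achieved by choosing the initial level at the midpoint $k_0 = (M + m)/2$ so that $(M - k) \le d/2$ throughout the iteration. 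Once this uniform Caccioppoli is established, the remaining passage from the essential sup estimates on the open balls to the oscillation estimate on the closure $B(x_0, r/4) \cap \overline{\Omega}$ is routine.
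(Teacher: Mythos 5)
Your overall strategy coincides with the paper's: a De Giorgi level-set iteration modeled on \cite[Lemma 4.1]{V12}, and your first Caccioppoli inequality (testing with $(u-k)_+\zeta^2$ for $k\ge0$ and dropping the boundary term because $\beta\,u\,(u-k)_+\ge0$ there) is exactly the paper's estimate \eqref{eq3.21}, which is likewise stated only for positive levels. The gap is in the ``infimum'' half of the argument, i.e.\ in your treatment of the companion function $w:=M-u$. Testing $B[w,\varphi]=M\int_{\partial\Omega}\beta\varphi\,d\sigma$ with $(w-l)_+\zeta^2$ leaves the source $\int_{\partial\Omega}\beta\,\mathrm{Tr}(u)(w-l)_+\zeta^2\,d\sigma\le (M-l)\int_{\partial\Omega}\beta(w-l)_+\zeta^2\,d\sigma$, and your iteration runs over $w$-levels $l\in[d/2,d]$ (since $\{w\ge d/2\}=\{u\le k_0\}$). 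For such $l$ one only has $M-l\le M-d/2=(M+m)/2$, and $(M+m)/2\le d/2$ holds if and only if $m\le0$. The normalization $u\mapsto-u$ gives $M+m\ge0$ but cannot force $m\le0$: if, say, $u\in[100,101]$ on $B(x_0,r)$ (so $d=1$), then the source coefficient is at least $m=100\gg d/2$, and constants cannot be subtracted because they are not Robin solutions. So the key quantitative claim ``$(M-k)\le d/2$ throughout the iteration'' fails precisely in the case where the Robin term genuinely obstructs the lower bound (a strictly positive solution being pulled toward $0$ where $\beta$ concentrates, so that $w$ spikes); the perturbation is then of size $m^2\|\beta\|_{L^{q_0}}\,\sigma(\cdot)^{1/q_0'}$ rather than $O(d^2)$, and the iteration does not close.

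A second, related omission is that your proposal contains no analogue of the paper's estimate \eqref{eq3.22} (the counterpart of \cite[Proposition 3.4]{V12}), which bounds $\sigma(\partial\Omega\cap A(h,\rho_1))^{2/\kappa_0}$ by $(h-k)^{-2}(\rho_2-\rho_1)^{-2}\int_{A(k,\rho_2)}|u-k|^2\,dx$. After your H\"older/trace/Poincar\'e absorption, the residual boundary term is proportional to $\|\beta\|_{L^{q_0}}$ times a power of the \emph{surface} measure of the boundary level set, and it is exactly an estimate of the type \eqref{eq3.22} that converts this residual into a quantity that decays along the levels so that the De Giorgi scheme closes. The paper's proof keeps this surface-measure estimate as an explicit second pillar and then defers the (delicate) combinatorics, including the sign issue above, to the full argument of \cite[Lemma 4.1]{V12}; your absorption-only scheme replaces it with a claim that is false in the hard case. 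To repair the proposal you would need to (i) prove a version of \eqref{eq3.22}, and (ii) explain how the lower-bound iteration is carried out when $u$ does not change sign on $B(x_0,r)$.
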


\begin{proof}
We prove the present lemma by borrowing some ideas from the proof of \cite[Lemma 4.1]{V12}.

For any given $k\in(0,\infty)$ and $\rho\in(0,r)$, let
$$A(k,\rho):= \{x\in B(x_0,\rho)\cap\Omega:u(x)>k\}.$$
Then, similar to the proof of \cite[Proposition 3.2]{V12},
we conclude that, for any $k\in(0,\infty)$ and $0<\rho_1<\rho_2<r$,
\begin{equation}\label{eq3.21}
\int_{A(k,\rho_1)}|\nabla u|^2\,dx\lesssim (\rho_2-\rho_1)^{-2}
\int_{A(k,\rho_2)}|u-k|^2\,dx.
\end{equation}
Moreover, replacing \cite[(2.2) and Proposition 2.1]{V12} respectively
by Theorem \ref{th2.1}(iii) and Lemma \ref{le2.3}, and repeating
the proof of \cite[Proposition 3.4]{V12}, we find that, for any $k\in(0,\infty)$
and $\rho_1\in(0,r)$ satisfying $|A(k,\rho_1)|\le|B(x_0,\rho_1)\cap\Omega|/2$ and for any $\rho_2\in(\rho_1,r)$ and $h\in(k,\infty)$,
\begin{equation}\label{eq3.22}
\sigma(\partial\Omega\cap A(h,\rho_1))^{\frac{2}{\kappa_0}}
\lesssim(h-k)^{-2}(\rho_2-\rho_1)^{-2}
\int_{A(k,\rho_2)}|u-k|^2\,dx,
\end{equation}
where $\kappa_0:=2s/(n-2)$ when $n\ge3$ and $\kappa_0\in(1,\infty)$ is
a given constant when $n=2$.

Then, repeating the proof of \cite[Lemma 4.1]{V12} with
\cite[Propositions 3.2 and 3.4]{V12} replaced respectively
by \eqref{eq3.21} and \eqref{eq3.22}, we conclude that \eqref{eq3.20}
holds. This then finishes the proof of Lemma \ref{le3.2}.
\end{proof}
	
Now, we show Proposition \ref{pro3.1} by using Lemmas \ref{le3.1} and \ref{le3.2}.
	
\begin{proof}[Proof of Proposition \ref{pro3.1}]
Let $u \in W^{1,2}(\Omega)$ be the weak solution to the Robin problem \eqref{Robin-Pro} with $f\in L^p(\partial\Omega,\sigma)$
for some $p\in (s/(s+2-n),\infty]$.
Then, for any $\varphi \in W^{1,2}(\Omega)$,
$$
B[u,\varphi] = \int_{\partial \Omega} f\mathrm{Tr} (\varphi)\,d\sigma.
$$
By Lemma \ref{le3.1}, we find that $u$ is bounded on $\overline{\Omega}$. Let $x_0 \in \partial \Omega$, $r \in (0,\mathrm{diam\,}(\Omega)/4]$, and
$\widetilde{u} \in W^{1,2}_{\rm c}(B(x_0,r)\cap\Omega)$ be a weak solution to the Robin problem \eqref{eq3.1}.
From Lemma \ref{le3.1}, we deduce that there exists a positive constant
$\theta$ such that
\begin{equation}\label{eq3.23}
\left\|\widetilde{u}\right\|_{L^{\infty}{(B(x_0,r)\cap\Omega)}}
+\left\|\mathrm{Tr}(\widetilde{u})\right\|_{{L^{\infty}{(B(x_0,r)
\cap\partial\Omega,\sigma)}}} \lesssim r^{\theta}\|f\|_{L^p(B(x_0,r)\cap\partial\Omega,\sigma)}.
\end{equation}
Let $v:= u-\widetilde{u}$. Then $v \in W^{1,2}(\Omega)$ is bounded on $\overline{\Omega}$ and satisfies \eqref{eq3.19}.
Then, by Lemma \ref{le3.2}, we conclude that there exists a constant $\eta \in (0,1)$
independent of $x_0$ and $r$ such that
$$
\mathop{\mathrm{osc}}_{B(x_0,r/4)\cap\overline{\Omega}}v\le \eta \mathop{\mathrm{osc}}_{B(x_0,r)\cap\overline{\Omega}}v,
$$
which, combined with $v= u-\widetilde{u}$ and the definition of $\mathop{\mathrm{osc}}_{B(x_0,r/4)\cap\overline{\Omega}}v$, further implies that
\begin{align}\label{eq3.24}
\mathop{\mathrm{osc}}_{B(x_0,r/4)\cap\overline{\Omega}}u &\le \mathop{\mathrm{osc}}_{B(x_0,r/4)\cap\overline{\Omega}}\widetilde{u}
+\mathop{\mathrm{osc}}_{B(x_0,r/4)\cap\overline{\Omega}}v \notag \\
&\le 2\left[\|\widetilde{u}\|_{L^{\infty}{(B(x_0,r)\cap\Omega)}}
+\|\mathrm{Tr}(\widetilde{u})\|_{L^{\infty}(B(x_0,r)\cap\partial\Omega,\sigma)}\right]
+\eta\mathop{\mathrm{osc}}_{B(x_0,r)\cap\overline{\Omega}}v\notag \\
&\le 2(1+\eta)\left[\|\widetilde{u}\|_{L^{\infty}{(B(x_0,r)\cap\Omega)}}+\|\mathrm{Tr}(\widetilde{u})
\|_{L^{\infty}(B(x_0,r)\cap\partial\Omega,\sigma)}\right]+ \eta \mathop{\mathrm{osc}}_{B(x_0,r)\cap\overline{\Omega}}u.
\end{align}
From \eqref{eq3.23} and \eqref{eq3.24}, we infer that, for any $r\in(0,\mathrm{diam\,}(\Omega)/4]$,
\begin{align}\label{eq3.25}
\mathop{\mathrm{osc}}_{B(x_0,r/4)\cap\overline{\Omega}}u &\le \eta \mathop{\mathrm{osc}}_{B(x_0,r)\cap\overline{\Omega}}u
+Cr^{\theta} \|f\|_{L^p(B(x_0,r)\cap\partial\Omega,\sigma)}\notag \\
&\le \eta \mathop{\mathrm{osc}}_{B(x_0,r)\cap\overline{\Omega}}{\mathrm{osc}}\, u
+Cr^{\theta}\|f\|_{L^p(\partial\Omega,\sigma)},
\end{align}
where $C$ is a positive constant independent of $f$, $u$, $r$, $\eta$, and $\theta$.
Then, by \eqref{eq3.25} and \cite[Lemma 4.19]{hl11}, we find that, for any given $\theta_0\in(0,\theta)$,
there exists a constant
$\alpha_0\in(0,1]$, depending only on $\eta$, $\theta_0$, and the geometric constants for $(\Omega,\sigma)$, such that, for any
$r\in(0,r_0]$ with $r_0:=\mathrm{diam\,}(\Omega)/4$,
\begin{align}\label{eq3.26}
\mathop{\mathrm{osc}}_{B(x_0,r)\cap\overline{\Omega}}\, u \lesssim \left(\frac{r}{r_0}\right)^{\alpha_0}
\mathop{\mathrm{osc}}_{B(x_0,r_0)\cap\overline{\Omega}}\, u
+r^{\theta_0}\|f\|_{L^p(\partial\Omega,\sigma)}.
\end{align}
	
Next, we estimate the H\"older semi-norm of $u$ on $B(x_0,r_0)\cap\overline{\Omega}$. From
\eqref{eq3.26} and Lemma \ref{le3.1}(ii), it follows that, for any $x,y\in B(x_0,r_0)\cap\overline{\Omega}$,
\begin{align*}
\frac{|u(x)-u(y)|}{|x-y|^{\alpha_0}} &\lesssim
\frac{1}{r_0^{\alpha_0}}
\mathop{\mathrm{osc}}_{B(x_0,r_0)\cap\overline{\Omega}}u +
\|f\|_{L^p(\partial\Omega,\sigma)}\notag \\
&\lesssim r_0^{-\alpha_0}\left[\|u\|_{L^{\infty}(B(x_0,r_0)\cap\Omega)}+\|\mathrm{Tr}(u)
\|_{L^{\infty}(B(x_0,r_0)\cap\partial\Omega,\sigma)}\right] + \|f\|_{L^p(\partial\Omega,\sigma)}\notag \\
&\lesssim \|f\|_{L^p(\partial\Omega,\sigma)},
\end{align*}
which, together with Lemma \ref{le3.1}(ii) again, further implies that, for any given $\alpha\in(0,\alpha_0]$,
\begin{align}\label{eq3.27}
\|u\|_{C^{0,\alpha}(B(x_0,r_0)\cap\overline{\Omega})}& = \|u\|_{L^{\infty}(B(x_0,r_0)\cap\overline{\Omega})} +
\sup\limits_{\substack{x,y\in B(x_0,r_0)\cap\overline{\Omega}}}  \frac{|u(x)-u(y)|}{|x-y|^\alpha}\notag \\
 &\lesssim \|f\|_{L^p(\partial\Omega,\sigma)}.
\end{align}
Moreover, by the well-known interior H\"older estimate for $u$ (see, for instance, \cite[Lemma 11.30]{DFM23}),
we conclude that, for any $x_1\in\Omega$ and $R_0\in(0,\mathrm{diam\,}(\Omega)/4]$ satisfying $B(x_1,2R_0)\subset\Omega$,
there exists a constant $\alpha_1\in(0,1]$, depending only on the geometric constants for $(\Omega,\sigma)$
and $\mu_0$ in \eqref{eq1.2}, such that, for any $r\in(0,R_0)$,
$$
\mathop{\mathrm{osc}}_{B(x_1,r)}\, u \lesssim \left(\frac{r}{R_0}\right)^{\alpha_1}\left[\fint_{B(x_1,R_0)}
|u|^2\,dy\right]^{\frac12},
$$
which, combined with Lemma \ref{le3.1}(ii), further implies that, for any given $\alpha\in(0,\alpha_1]$,
\begin{align}\label{eq3.28}
\|u\|_{C^{0,\alpha}(B(x_1,R_0))}\lesssim R_0^{-\alpha}\|f\|_{L^p(\partial\Omega,\sigma)}.
\end{align}
Therefore, from the arbitrariness of $x_0\in\partial\Omega$ in \eqref{eq3.27} and $x_1\in\Omega$ in \eqref{eq3.28},
\eqref{eq3.27}, \eqref{eq3.28}, and a standard covering argument, we further deduce that
$u\in C^{0,\alpha}(\overline{\Omega})$ with any give $\alpha\in(0,\min\{\alpha_0,\alpha_1\}]$
and $\|u\|_{C^{0,\alpha}(\overline{\Omega})}\lesssim \|f\|_{L^p(\partial\Omega,\sigma)}$.
This finishes the proof of Proposition \ref{pro3.1}.
\end{proof}
	
Finally, we give the proof of Theorem \ref{th1.1} by using Theorem \ref{th2.2} and Proposition \ref{pro3.1}.
\begin{proof}[Proof of Theorem \ref{th1.1}]
By the assumptions $n\ge2$ and $s\in(n-2,n)$, we find that $s/(s+2-n)\ge2s/(2s+2-n)$.
From this, the assumption that $\Omega$ is bounded, and Theorem \ref{th2.2}, we deduce that
there exists a unique weak solution to the Robin problem \eqref{Robin-Pro} with $f\in L^p(\partial \Omega,\sigma)$
for some $p\in (s/(s+2-n),\infty]$.
		
Moreover, by Proposition \ref{pro3.1}, we conclude that there exists $\alpha_0\in(0,1]$
such that, for any given $\alpha\in(0,\alpha_0)$, $u\in C^{0,\alpha}(\overline{\Omega})$ and \eqref{eq1.5} holds.
This finishes the proof of Theorem \ref{th1.1}.
\end{proof}

\section{Proof of Theorem \ref{th1.2}\label{S5}}
In this section, we prove Theorem \ref{th1.2}. We begin with establishing the following Moser type estimate for the Robin problem \eqref{Robin-Pro}.

\begin{lemma}\label{le4.1}
Let $n\ge 2,\ s\in (n-2,n)$, and $(\Omega,\sigma)$ satisfy Assumption \ref{Ass} with $\Omega$
being bounded. Assume that the constant $K_0\in [1,\infty)$
is as in Lemma \ref{le2.8}, $x_0\in\partial\Omega$, and $r\in(0,\mathrm{diam\,}(\Omega)/4]$. Let the non-negative function
$\tau\in L^q(B(x_0,2K_0r)\cap\partial\Omega,\sigma)$ for some $q\in[s/(s+2-n),\infty]$ when $n\ge3$ and some $q\in(1,\infty]$ when $n=2$.
Assume that $0\le w\in W^{1,2}(B(x_0,2K_0r)\cap\Omega)$
satisfies that, for any $\varphi \in W^{1,2}(B(x_0,2K_0r)\cap\Omega)$ satisfying $\varphi\ge 0$
and $\varphi\equiv0$ on $\Omega\backslash B(x_0,\rho)$ for some
$\rho\in(0,2K_0r)$,
\begin{equation}\label{eq4.1}
\int_{B(x_0,2K_0r)\cap\Omega} A\nabla w\cdot \nabla \varphi\,dx \le \int_{B(x_0,2K_0r)\cap\partial\Omega}\tau \varphi\,d\sigma.
\end{equation}
Then there exists a positive constant $C$ depending only on the geometric constants for
$(\Omega,\sigma)$ and $\mu_0$ in \eqref{eq1.2} such that
\begin{equation}\label{eq4.2}
\sup\limits_{x\in B(x_0,r)\cap\Omega} w (x) \le C\left\{\|\tau\|_{L^q(B(x_0,2r)\cap\partial\Omega,\sigma)}+
\left[\fint_{B(x_0,2r)\cap\Omega} w^2\,dx\right]^{\frac12}\right\}.
\end{equation}
\end{lemma}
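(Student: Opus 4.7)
The plan is to carry out a classical Moser iteration adapted to the Robin-type variational inequality \eqref{eq4.1}. Set $k := \|\tau\|_{L^q(B(x_0,2r)\cap\partial\Omega,\sigma)}$ (after adding an auxiliary $\varepsilon > 0$ if $k = 0$ and letting $\varepsilon \downarrow 0$ at the end) and $\bar{w} := w + k$, so that $\bar{w} \geq k$ pointwise. For an exponent $\gamma \geq 2$ and a Lipschitz cutoff $\eta$ supported in $B(x_0,\rho_2) \subset B(x_0,2K_0r)$, equal to $1$ on $B(x_0,\rho_1)$ with $|\nabla\eta| \lesssim (\rho_2-\rho_1)^{-1}$, I would test \eqref{eq4.1} with $\varphi := \eta^2 \min(\bar{w},m)^{\gamma-1}$ and then pass $m \to \infty$. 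Using the uniform ellipticity \eqref{eq1.2} and Young's inequality yields, for $v := \eta \bar{w}^{\gamma/2}$, a Caccioppoli-type estimate
\begin{equation*}
\int_{\Omega}|\nabla v|^{2}\,dx \lesssim \int_{\Omega}|\nabla\eta|^{2}\bar{w}^{\gamma}\,dx + \gamma\int_{\partial\Omega}\tau\eta^{2}\bar{w}^{\gamma-1}\,d\sigma.
\end{equation*}

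The crucial step is controlling the boundary integral. Exploiting $\bar{w}\ge k$ to convert $\bar{w}^{\gamma-1}$ to $\bar{w}^{\gamma}/k$ and applying H\"older with $1/q + 1/q' = 1$ bounds the boundary term by $\|v\|_{L^{2q'}(\partial\Omega,\sigma)}^{2}$. Since $q \geq s/(s+2-n)$ (respectively $q > 1$ when $n = 2$) gives $2q' \leq 2s/(n-2)$ (respectively any finite exponent), this falls within the range of the trace Sobolev embedding in Theorem~\ref{th2.1}(iii). An interpolation in $L^{2q'}(\partial\Omega,\sigma)$, combined with Lemma~\ref{le2.4} to transfer the resulting $L^{2}(\partial\Omega)$-norm of $v$ back to an interior $L^{2}$-norm, and the interior Sobolev embedding of Lemma~\ref{le2.1} (applied on the tent domain $T(x_0,\rho_2)$ of Lemma~\ref{le2.8} to preserve NTA-type constants), then yields, after absorbing a small multiple of $\int|\nabla v|^{2}\,dx$ into the left-hand side, a reverse H\"older inequality
\begin{equation*}
\left[\fint_{B(x_0,\rho_1)\cap\Omega}\bar{w}^{\chi\gamma}\,dx\right]^{\frac{1}{\chi\gamma}} \le \left[\frac{C\gamma^{\theta}}{(\rho_2-\rho_1)^{2}}\right]^{\frac{1}{\gamma}}\left[\fint_{B(x_0,\rho_2)\cap\Omega}\bar{w}^{\gamma}\,dx\right]^{\frac{1}{\gamma}},
\end{equation*}
with some $\chi > 1$ (one may take $\chi := n/(n-2)$ when $n \geq 3$, and any $\chi > 1$ when $n = 2$) and some absolute exponent $\theta > 0$. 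Iterating with $\gamma_{i} := 2\chi^{i}$ and $\rho_{i} := r(1 + 2^{-i})$ and then letting $i \to \infty$ produces
\begin{equation*}
\sup_{B(x_0,r)\cap\Omega}\bar{w}\lesssim\left[\fint_{B(x_0,2r)\cap\Omega}\bar{w}^{2}\,dx\right]^{\frac{1}{2}},
\end{equation*}
since the product $\prod_{i}(C\gamma_{i}^{\theta})^{1/\gamma_{i}}$ converges. Substituting $\bar{w} = w + \|\tau\|_{L^{q}}$ and invoking Minkowski then gives \eqref{eq4.2}.

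The main technical obstacle is the borderline case $q = s/(s+2-n)$ (or $q$ near $1$ when $n = 2$), in which $2q'$ is exactly the critical trace Sobolev exponent and no natural gap is available for the interpolation step. Closing the absorption in this case requires either splitting $\tau = \tau_{1} + \tau_{2}$ with $\tau_{1} \in L^{\infty}$ and $\|\tau_{2}\|_{L^{q}}$ arbitrarily small (via absolute continuity of the Lebesgue integral with respect to $\sigma$) and handling each piece separately, or exploiting the scale factor $r$ in the Poincar\'e-type inequalities of Lemmas~\ref{le2.4} and~\ref{le2.6} together with the $1/\gamma^{2}$ gain present in the Caccioppoli estimate to guarantee the required smallness at every step of the iteration.
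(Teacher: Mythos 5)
Your overall strategy---Moser iteration on $\bar w:=w+\|\tau\|_{L^q}$ with truncated power test functions, H\"older plus a trace Sobolev embedding for the boundary term, and a reverse H\"older iteration---is exactly the paper's (cf.\ \eqref{eq4.3}--\eqref{eq4.17}), including the additive constant $k$ comparable to $\|\tau\|_{L^q}$, the tent domain of Lemma \ref{le2.8}, and the Poincar\'e inequality of Lemma \ref{le2.5} for the Sobolev step.

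The place where your write-up does not close is precisely the one you flag: the endpoint $q=s/(s+2-n)$. This is not a removable borderline case; it is part of the hypothesis and is the case actually needed downstream (taking $q_0=s/(s+2-n)$ in \eqref{eq1.7} is what recovers the condition of \cite{DDEMM24}). Neither of your two suggested repairs is carried out, and the first one (splitting $\tau$ into a bounded piece and a piece with small $L^q$ norm) produces constants depending on the truncation level, which must be chosen in a $\gamma$-dependent way at each stage; since $\|\tau\mathbf{1}_{\{\tau>\Lambda\}}\|_{L^q}$ may decay arbitrarily slowly in $\Lambda$, it is not automatic that the resulting constants enter the iteration only as $C(\gamma_i)^{1/\gamma_i}$ with a convergent product. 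The paper's mechanism at the endpoint is different and uses no interpolation at all: since $\mathrm{Tr}(U\varphi)=0$ on $\partial B(x_0,2r)\cap\Omega$, Lemma \ref{le2.5} gives $\|U\varphi\|_{W^{1,2}}\sim\|\nabla(U\varphi)\|_{L^2}$, and Theorem \ref{th2.1}(iii) applied on the tent domain yields the \emph{critical} bound $\|U\varphi\|^2_{L^{2q'}(\partial\Omega,\sigma)}\le C_2\|\nabla(U\varphi)\|^2_{L^2}$, valid exactly because $2q'\le 2s/(n-2)$; the boundary term is then at most $C_1C_2k^{-1}\|\tau\|_{L^q}\|\nabla(U\varphi)\|^2_{L^2}$ and is absorbed by choosing $k:=2C_1C_2\|\tau\|_{L^q}+\varepsilon$. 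The smallness comes from the ratio $\|\tau\|_{L^q}/k$, not from an exponent gap, so the endpoint causes no difficulty on this route. If you adopt it, one point still requires care: your Caccioppoli estimate carries a factor $\gamma$ on the boundary integral, and a $\gamma$-independent choice of $k$ cannot absorb a term of size $C\gamma\|\nabla(\eta\bar w^{\gamma/2})\|^2_{L^2}$ along the iteration. The paper's corresponding display \eqref{eq4.8} records the boundary term with a $\gamma$-independent coefficient, obtained by estimating the boundary contribution at the level of the quantity $\mathrm I$ in \eqref{eq4.6}--\eqref{eq4.7} before passing to $|\nabla(U\varphi)|^2$; reconciling this with the factor $(\alpha-1)^2$ that $|\nabla U|^2$ produces is a step you should verify explicitly when writing out the details.
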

\begin{proof}
We prove the present lemma by borrowing some ideas from the proof of \cite[Lemma 3.2]{DDEMM24}.
Let $M\in(0,\infty)$ be a constant large enough. For any $x\in B(x_0,2K_0r)\cap\Omega$, let
$$v(x):=\begin{cases}
w(x)+ k&\text{if} \ 0<w(x)<M,\\
M+k&\text{otherwise},
\end{cases} $$
where the constant $k\in(0,\infty)$ will be chosen later. Then, for any $x\in B(x_0,2K_0r)\cap\Omega$, $k<v(x)\le M+k$.
		
Take $\varphi \in C_{\rm c}^{\infty}(B(x_0,2r))$ satisfy $0\le\varphi\le1$. For any given $\alpha\in[1,\infty)$, define
$h:= v^{\alpha-1}(w+k)\varphi^2.$
Since $v$ is bounded and $\nabla v =0$ when $w\ge M$, it follows that $h\in W^{1,2}(B(x_0,2K_0r)\cap\Omega)$ and
\begin{equation}\label{eq4.3}
\nabla h=(\alpha-1)\left(v^{\alpha-2}\nabla v\right)(w+k)\varphi^2+v^{\alpha-1} \varphi^2\nabla w + 2\varphi v^{\alpha-1}
(w+k) \nabla \varphi \in L^2(B(x_0,2K_0r)\cap\Omega).
\end{equation}
Thus, using $h$ as a test function in \eqref{eq4.1}, we find that
\begin{equation}\label{eq4.4}
\int_{B(x_0,2r)\cap\Omega} A\nabla w\cdot \nabla h\,dx \le \int_{B(x_0,2r)\cap\partial\Omega} \tau h\,d\sigma
= \int_{B(x_0,2r)\cap\partial\Omega}\tau v^{\alpha-1} (w+k) \varphi^2\,d\sigma.
\end{equation}
Moreover, notice that $A\nabla w\cdot\nabla v=A\nabla v\cdot\nabla v$. From this and \eqref{eq4.3}, we deduce that
\begin{align}\label{eq4.5}
&A\nabla w \cdot \nabla h - A\nabla w\cdot [2\varphi v^{\alpha-1}(w+k) \nabla \varphi] \notag \\
& \quad= (\alpha-1) v^{\alpha-2} (w+k) \varphi^2 A\nabla v\cdot \nabla v +v^{\alpha-1} \varphi^2 A\nabla w\cdot \nabla w \notag \\
&\quad \ge  (\alpha-1) v^{\alpha-1} \varphi^2 A\nabla v\cdot \nabla v + v^{\alpha-1} \varphi^2 A\nabla w\cdot \nabla w.
\end{align}
Then, by \eqref{eq1.2}, \eqref{eq4.4}, and \eqref{eq4.5}, we further conclude that
\begin{align}\label{eq4.6}
\mathrm{I}&:=(\alpha-1)\int_{B(x_0,2r)\cap\Omega}v^{\alpha-1}\varphi^2|\nabla v|^2\,dx+\int_{B(x_0,2r)\cap\Omega}
v^{\alpha-1}\varphi^2|\nabla w|^2\,dx\notag \\
&\le C(\alpha-1)\int_{B(x_0,2r)\cap\Omega}v^{\alpha-1}\varphi^2 A\nabla v\cdot\nabla v\,dx
+C\int_{B(x_0,2r)\cap\Omega}v^{\alpha-1}\varphi^2A\nabla w\cdot\nabla w\,dx\notag \\
&\le C\int_{B(x_0,2r)\cap\Omega} \varphi v^{\alpha-1} (w+k) |\nabla w| |\nabla \varphi|\,dx
+C\int_{B(x_0,2r)\cap\partial\Omega}\tau v^{\alpha-1}(w+k)\varphi^2\,d\sigma,
\end{align}
where $C$ is a positive constant depending only on $\mu_0$ in \eqref{eq1.2}.
From H\"older's inequality and Young's inequality, we infer that, for any given $\varepsilon\in(0,1)$,
\begin{align*}
&\int_{B(x_0,2r)\cap\Omega} \varphi v^{\alpha-1} (w+k) |\nabla w| |\nabla \varphi|\,dx\\
&\quad\le \left[\int_{B(x_0,2r)\cap\Omega} \varphi^2 v^{\alpha-1} |\nabla w|^2\,dx \right]^{\frac12}
\left[\int_{B(x_0,2r)\cap\Omega} v^{\alpha-1} (w+k)^2 |\nabla \varphi|^2\,dx \right]^{\frac12} \\
&\quad\le \varepsilon \int_{B(x_0,2r)\cap\Omega} \varphi^2 v^{\alpha-1} |\nabla w|^2\,dx
+ C_{\varepsilon}\int_{B(x_0,2r)\cap\Omega} v^{\alpha-1} (w+k)^2 |\nabla \varphi|^2 \,dx
\end{align*}
with $C_{\varepsilon}$ being a positive constant depending only
on $\varepsilon$, which, combined with \eqref{eq4.6}, further implies that
\begin{equation}\label{eq4.7}
\mathrm{I}\le C\int_{B(x_0,2r)\cap\Omega} v^{\alpha-1} (w+k)^2 |\nabla \varphi|^2\,dx
+C\int_{B(x_0,2r)\cap\partial\Omega} \tau v^{\alpha-1} (w+k) \varphi^2\,d\sigma.
\end{equation}
Let $U:= v^{\frac{\alpha-1}{2}} (w+k)$. Then, by $w+k\le v$, we find that
\begin{align*}
|\nabla (U\varphi)| &\le U|\nabla \varphi| + \varphi \dfrac{\alpha-1}{2} v^{\frac{\alpha-3}{2}}(w+k) |\nabla v|
+ \varphi v^{\frac{\alpha-1}{2}}|\nabla w| \\
&\le U|\nabla \varphi| +\varphi \dfrac{\alpha-1}{2} v^{\frac{\alpha-1}{2}} |\nabla v| + \varphi v^{\frac{\alpha-1}{2}}|\nabla w|,
\end{align*}
which, together with \eqref{eq4.7} and the assumption $w\ge0$, implies that
\begin{align}\label{eq4.8}
&\int_{B(x_0,2r)\cap\Omega} |\nabla (U\varphi)|^2\,dx\notag \\
&\quad\le C \int_{B(x_0,2r)\cap\Omega} \left[U^2 |\nabla \varphi|^2
+ (\alpha -1)^2 \varphi^2v^{\alpha-1}|\nabla v|^2 + \varphi^2 v^{\alpha-1} |\nabla w|^2\right]\,dx\notag \\
&\quad\le C(\alpha+1)\int_{B(x_0,2r)\cap\Omega}U^2|\nabla \varphi|^2\,dx +C\int_{B(x_0,2r)\cap\partial\Omega}
\tau  (w+k)^{-1} (U\varphi)^2\,d\sigma\notag \\
&\quad\le C(\alpha+1)  \int_{B(x_0,2r)\cap\Omega} U^2 |\nabla \varphi|^2\,dx + C_1k^{-1}
\int_{B(x_0,2r)\cap\partial\Omega} \tau  (U\varphi)^2\,d\sigma.
\end{align}
Moreover, from the assumption $\mathrm{supp\,}(\varphi)\subset B(x_0,2r)$, we deduce that
$\mathrm{Tr}(U\varphi)|_{\partial B(x_0,2r)\cap\Omega}=0$. By this and Lemma \ref{le2.5},
we conclude that
\begin{align}\label{eq4.9}
\|U\varphi \|_{W^{1,2}(B(x_0,2r)\cap\Omega)}\sim\|\nabla(U\varphi)\|_{L^2(B(x_0,2r)\cap\Omega)}.
\end{align}
From the assumption $q\in [s/(s+2-n),\infty]$ when $n\ge3$ and $q\in(1,\infty]$ when $n=2$, it follows that
$2q'\in[2,2s/(n-2)]$ when $n\ge 3$ and $2q'\in[2,\infty)$ when $n=2$. This, combined with Lemma \ref{le2.8}, Theorem \ref{th2.1}(iii),
$\mathrm{supp\,}(U\varphi)\subset B(x_0,2r)$, and \eqref{eq4.9}, further yields that
\begin{align*}
\|U\varphi\|_{L^{2q'}(B(x_0,2r)\cap\partial\Omega,\sigma)}&\le
\|U\varphi\|_{L^{2q'}(\partial T(x_0,2r)\cap\partial\Omega,\sigma)}\lesssim\|U\varphi\|_{W^{1,2}(T(x_0,2r))}\notag \\
&\sim\|U\varphi\|_{W^{1,2}(B(x_0,2r)\cap\Omega)}\sim\|\nabla(U\varphi)\|_{L^2(B(x_0,2r)\cap\Omega)},
\end{align*}
where $T(x_0,2r)$ denotes the tent domain as in Lemma \ref{le2.8},
which, together with H\"older's inequality, implies that
\begin{align}\label{eq4.10}
\int_{B(x_0,2r)\cap\partial\Omega}\tau(U\varphi)^2\,d\sigma
&\le\|\tau\|_{L^q(B(x_0,2r)\cap\partial\Omega,\sigma)}
\|U\varphi\|^2_{L^{2q'}(B(x_0,2r)\cap\partial\Omega,\sigma)}\notag \\
&\le C_2\|\tau\|_{L^q(B(x_0,2r)\cap\partial\Omega,\sigma)}\|\nabla(U\varphi)\|^2_{L^2(B(x_0,2r)\cap\Omega)}.
\end{align}
Thus, choosing $k:=2C_1C_2 \|\tau\|_{L^q(B(x_0,2r)\cap\partial\Omega,\sigma)}+\varepsilon$
with any given $\varepsilon\in(0,\infty)$ and applying \eqref{eq4.8} and \eqref{eq4.10}, we then obtain
\begin{equation}\label{eq4.11}
\int_{B(x_0,2r)\cap\Omega} |\nabla (U\varphi)|^2\,dx \le C(\alpha+1) \int_{B(x_0,2r)\cap\Omega}U^2 |\nabla \varphi|^2\,dx.
\end{equation}
Using Poincar\'e's inequality as in Lemma \ref{le2.5} and \eqref{eq4.11},
we conclude that there exists $p_2\in(2,2n/(n-2)]$ when $n\ge3$ and $p_2\in(2,\infty)$ when $n=2$ such that
\begin{align}\label{eq4.12}
\left[\fint_{B(x_0,2r)\cap\Omega} |U\varphi|^{p_2}\,dx\right]^{\frac{2}{p_2}}
&\le Cr^2\fint_{B(x_0,2r)\cap\Omega}|\nabla (U\varphi)|^2\,dx\notag \\
&\le C(\alpha+1)r^2
\fint_{B(x_0,2r)\cap\Omega} U^2 |\nabla \varphi |^2\,dx,
\end{align}
where $C$ is a positive constant independent of $\alpha$, $r$, and $U\varphi$.
		
Next, we continue as in the standard Moser iteration argument. Given any two positive constants $t_1$ and $t_2$ with $r\le t_1<t_2<2r$, we
take $0\le \varphi\le 1$ such that $\varphi\equiv1$ on $B(x_0,t_1)$, $\mathrm{supp\,}(\varphi)\subset B(x_0,t_2)$, and
$|\nabla \varphi|\lesssim 1/(t_2-t_1)$.
Then, by \eqref{eq4.12}, we find that
\begin{equation}\label{eq4.13}
\left[\fint_{B(x_0,t_1)\cap\Omega}|U|^{p_2}\,dx\right]^{\frac2{p_2}}\le
C\frac{(\alpha+1)r^2}{(t_2-t_1)^2}\fint_{B(x_0,t_2)\cap\Omega}|U|^{2}\,dx,
\end{equation}
where $C$ is a positive constant independent of $\alpha$, $r$,
$t_1$, $t_2$, and $U$.
Letting $M\to +\infty$, then $U$ tends to $(w+k)^{(\alpha+1)/2}$ and,
applying the monotone convergence theorem in \eqref{eq4.13},
we further conclude that
\begin{equation}\label{eq4.14}
\left[\fint_{B(x_0,t_1)\cap\Omega} |w+k|^{\frac{p_2(\alpha+1)}{2}}\,dx \right]^{\frac2{p_2}}\le C\frac{(\alpha+1)r^2}{(t_2-t_1)^2}
\fint_{B(x_0,t_2)\cap\Omega}|w+k|^{\alpha+1}\,dx.
\end{equation}
		
Let $\gamma:=p_2/2>1$ and, for any $m\in\mathbb{Z}_+$,
$r_m:= (1+ 2^{-m})r$ and $\alpha_m:= 2\gamma^m-1$. Applying \eqref{eq4.14}
with $t_1:= r_{m+1}$, $t_2:=r_{m}$, and $\alpha:=\alpha_m$, we then
find that, for any $m\in\mathbb{Z}_+$,
\begin{equation}\label{eq4.15}
\left[\fint_{B(x_0,r_{m+1})\cap\Omega}|w+k|^{\frac{p_2(\alpha_m+1)}2} \,dx\right]^{\frac2{p_2}}\le C(\alpha_m+1)2^{2m}
\left[\fint_{B(x_0,r_{m})\cap\Omega}|w+k|^{\alpha_m+1}\,dx\right].
\end{equation}
For any $m\in\mathbb{Z}_+$, let
$${\rm I}_m :=\left[\fint_{B(x_0,r_m)\cap\Omega}|w+k|^{(\alpha_m+1)}\,dx \right]^{\frac1{\alpha_m+1}}.$$
From $\alpha_m= 2\gamma^m-1$ with $\gamma=p_2/2$, we infer that, for any $m\in\mathbb{Z}_+$, $p_2(\alpha_m+1)/2=\alpha_{m+1}+1$,
which, combined with \eqref{eq4.15}, further implies that
\begin{equation}\label{eq4.16}
{\rm I}_{m+1}^{\alpha_m+1}\le C(\alpha_m+1)
2^{2m} {\rm I}_m^{\alpha_m+1}.
\end{equation}
Notice that ${\rm I_0}=[\fint_{B(x_0,2r)\cap\Omega}|w+k|^{2}
\,dx]^{\frac12}<\infty$. By this and \eqref{eq4.16}, we conclude that,
for any $m\in\mathbb{N}$, ${\rm I_m}<\infty$. Moreover, applying \eqref{eq4.16}
again yields, for any $m\in\mathbb{Z}_+$,
\begin{equation}\label{eq4.17}
\log({\rm I}_{m+1})\le\log({\rm I}_m)+\frac{1}{\alpha_m+1}[\log(C)+\log(\alpha_m+1)
+2m\log(2)].
\end{equation}
Meanwhile, from $\alpha_m+1=2(p_2/2)^m$ for any $m\in\mathbb{Z}_+$, it follows that
$$
\sum_{m=0}^\infty\frac{1}{\alpha_m+1}[\log(C)+\log(\alpha_m+1)
+2m\log(2)]\lesssim1,
$$
which, together with \eqref{eq4.17}, further implies that
$$\limsup\limits_{m\to \infty}\log({\rm I}_m)\le\log({\rm I}_0)+C$$
and hence, by $k=2C_1C_2\|\tau\|_{L^q(B(x_0,2r)\cap\partial\Omega,\sigma)}+\varepsilon$,
we have
\begin{align*}
\|w+k\|_{L^{\infty}(B(x_0,r)\cap\Omega)} &\le \limsup\limits_{m\to \infty}
{\rm I}_m \lesssim{\rm I}_0\\
&\lesssim\left[\fint_{B(x_0,2r)\cap\Omega}|w+k|^{2}\,dx\right]^{\frac12}
\lesssim\left[\fint_{B(x_0,2r)\cap\Omega}|w|^{2}\,dx\right]^{\frac12}+k\\
&\sim\left[\fint_{B(x_0,2r)\cap\Omega}|w|^{2}\,dx \right]^{\frac12}+
\|\tau\|_{L^q(B(x_0,2r)\cap\partial\Omega,\sigma)}+\varepsilon.
\end{align*}
Using this, the Minkowski norm inequality of $\|\cdot\|_{L^{\infty}(B(x_0,r)\cap\Omega)}$,  and $k=2C_1C_2\|\tau\|_{L^q(B(x_0,2r)\cap\partial\Omega,\sigma)}+\varepsilon$ 
again and then letting $\varepsilon\to0$,
we conclude that \eqref{eq4.2} holds.
This then finishes the proof of Lemma \ref{le4.1}.
\end{proof}

Applying the approach used in the proof of Lemma \ref{le4.1}, we can also obtain the following Moser estimate for weak solutions to
the local Robin problem; we omit the details here.

\begin{corollary}\label{c4.1}
Let $n\ge 2,\ s\in (n-2,n)$, and $(\Omega,\sigma)$ satisfy Assumption \ref{Ass} with $\Omega$ being bounded.
Assume that $\beta$ satisfies \eqref{eq1.1}, $K_0\in [1,\infty)$ is as in Lemma \ref{le2.8}, $x_0\in\partial\Omega$,
and $r\in(0,\mathrm{diam\,}(\Omega)/4]$.
Let $u\in W^{1,2}(B(x_0,2K^2r)\cap\Omega)$ be a weak solution to the Robin problem
\begin{equation*}
\begin{cases}
-\operatorname{div} (A\nabla u) = 0&\text{in}~~B(x_0,2K_0r),\\
A\nabla u\cdot \boldsymbol{\nu} +\beta u =0&\text{on}~~B(x_0,2K_0r)\cap\partial\Omega.
\end{cases}
\end{equation*}
Then there exists a positive constant $C$ depending only on $n$, the geometric constants for $(\Omega,\sigma)$, and $\mu_0$ in \eqref{eq1.2} such that
\begin{equation*}
\sup\limits_{x\in B(x_0,r)\cap\Omega}|u(x)|\le C\left[\fint_{B(x_0,2r)\cap\Omega} |u|^2\,dx\right]^{\frac12}.
\end{equation*}
\end{corollary}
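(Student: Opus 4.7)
The plan is to reduce Corollary \ref{c4.1} to Lemma \ref{le4.1} by showing that both $u^+$ and $u^-$ satisfy the subsolution inequality \eqref{eq4.1} with $\tau\equiv 0$, then invoking the lemma directly. The key observation is that, because $\beta\ge 0$ and the boundary condition is homogeneous, the boundary contribution arising in the test-function identity for $u^+$ has a favorable sign and can simply be dropped.

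To justify the subsolution inequality for $w=u^+$, I would use an approximation argument. Let $G_\varepsilon:\mathbb{R}\to\mathbb{R}$ be a smooth nondecreasing function with $G_\varepsilon(t)=0$ for $t\le 0$, $G_\varepsilon(t)=1$ for $t\ge \varepsilon$, and $G_\varepsilon'\ge 0$. For any nonnegative $\varphi\in W^{1,2}(B(x_0,2K_0r)\cap\Omega)$ vanishing on $\Omega\setminus B(x_0,\rho)$ for some $\rho\in(0,2K_0r)$, the function $G_\varepsilon(u)\varphi$ is an admissible test function for the weak formulation of the Robin problem, yielding
\begin{equation*}
\int G_\varepsilon'(u)\varphi\, A\nabla u\cdot\nabla u\,dx + \int G_\varepsilon(u)\,A\nabla u\cdot\nabla\varphi\,dx + \int \beta u\, G_\varepsilon(u)\varphi\,d\sigma = 0.
\end{equation*}
The first term is nonnegative by \eqref{eq1.2} together with $G_\varepsilon'\ge 0$ and $\varphi\ge 0$, while the third term is nonnegative since $\beta\ge 0$ and $t G_\varepsilon(t)\ge 0$ for every $t\in\mathbb{R}$. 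Therefore $\int G_\varepsilon(u)\,A\nabla u\cdot\nabla\varphi\,dx \le 0$, and letting $\varepsilon\to 0^+$ and invoking dominated convergence gives
\begin{equation*}
\int_{B(x_0,2K_0r)\cap\Omega} A\nabla u^+\cdot\nabla\varphi\,dx \le 0,
\end{equation*}
which is precisely \eqref{eq4.1} with $\tau\equiv 0$.

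Lemma \ref{le4.1} applied to $w=u^+$ with $\tau\equiv 0$ then yields $\sup_{B(x_0,r)\cap\Omega}u^+ \lesssim [\,\fint_{B(x_0,2r)\cap\Omega}(u^+)^2\,dx]^{1/2}$. Since $-u$ solves the same homogeneous Robin problem by linearity, the identical argument applied to $(-u)^+=u^-$ gives the analogous bound for $u^-$, and combining the two yields the stated estimate. The only nontrivial step is the approximation used to derive the subsolution inequality for $u^+$, and this is routine once one exploits the sign assumption $\beta\ge 0$; the rest of the argument is a direct application of Lemma \ref{le4.1}. An equivalent approach, closer in spirit to the phrase ``we omit the details,'' would be to rerun the Moser iteration of Lemma \ref{le4.1} verbatim with the test function $h=v^{\alpha-1}(u^++k)\varphi^2$, observing that the boundary integral $\int \beta u^+ h\,d\sigma$ is nonnegative and hence may be discarded, which removes the need for the term involving $\|\tau\|_{L^q}$ in \eqref{eq4.2}.
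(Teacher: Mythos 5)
Your proposal is correct, and its primary route is a legitimate (and slightly different) way to fill in the details that the paper explicitly omits. The paper's intended argument, signalled by the phrase ``applying the approach used in the proof of Lemma \ref{le4.1}'', is to rerun the Moser iteration verbatim with the test function $h=v^{\alpha-1}(u^{\pm}+k)\varphi^2$ and observe that the boundary term $\int\beta\,\mathrm{Tr}(u^{\pm})h\,d\sigma$ is nonnegative and may be dropped --- exactly the alternative you describe in your closing sentences. Your main argument instead treats Lemma \ref{le4.1} as a black box: you verify that $w=u^{+}$ (and, by linearity of the homogeneous problem, $w=u^{-}$) satisfies the subsolution inequality \eqref{eq4.1} with $\tau\equiv0$, using the standard truncation $G_\varepsilon$ together with the sign conditions $\beta\ge0$, $G_\varepsilon'\ge0$, and $tG_\varepsilon(t)\ge0$, and then invoke the lemma with $\tau\equiv0$ so that the $\|\tau\|_{L^q}$ term in \eqref{eq4.2} disappears. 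Both routes are sound; yours has the advantage of not re-deriving the iteration and of isolating cleanly the single point where the Robin term is used (its sign), at the modest cost of the routine verifications that $G_\varepsilon(u)\varphi$ is admissible in the weak formulation and that the trace commutes with the Lipschitz composition $G_\varepsilon$, both of which are standard in this framework and consistent with how the paper itself manipulates test functions in the proof of Lemma \ref{le4.1}.
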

	
\begin{lemma}\label{le4.2}
Let $n\ge 2,\ s\in (n-2,n)$, and $(\Omega,\sigma)$ satisfy Assumption \ref{Ass} with $\Omega$ being bounded.
Assume that the constant $K_0\in [1,\infty)$
is as in Lemma \ref{le2.8}, $x_0\in\partial\Omega$, $r\in(0,\mathrm{diam\,}(\Omega)/(4K_0)]$, and the tent domain $T(x_0,r)$
is the same as in Lemma \ref{le2.8}. Let the non-negative function
$\tau\in L^q(\Gamma(x_0,2r),\sigma)$ for some $q\in[s/(s+2-n),\infty]$ when $n\ge3$ and some $q\in(1,\infty]$
when $n=2$, where $\Gamma(x_0,2r):=\partial T(x_0,2r)\cap\partial\Omega$.
Assume that $w\in W^{1,2}(T(x_0,2K_0r))$ is bounded, ${\rm Tr}(u)=0$ on $S(x_0,2r):=\partial T(x_0,2r)\cap\Omega$, and $w$
satisfies that, for any $\varphi \in W^{1,2}(T(x_0,2r))$ satisfying
$\varphi\ge 0$ and ${\rm Tr}(\varphi)=0$ on $S(x_0,2r)$,
\begin{equation}\label{eq4.18}
\int_{T(x_0,2r)} A\nabla w\cdot \nabla \varphi\,dx
\le \int_{\Gamma(x_0,2r)}\tau \varphi\,d\sigma.
\end{equation}
Then
\begin{equation}\label{eq4.19}
\sup\limits_{x\in B(x_0,r)\cap\Omega}u(x) \le C\|\tau\|_{L^q(\Gamma(x_0,2r),\sigma)},
\end{equation}
where the positive constant $C$ depends only on the geometric
constants for $(\Omega,\sigma)$ and $\mu_0$ in \eqref{eq1.2}.
\end{lemma}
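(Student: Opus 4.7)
The plan is to combine the Moser iteration machinery of Lemma~\ref{le4.1} with a direct energy estimate that exploits the zero trace condition $\mathrm{Tr}(w)=0$ on $S(x_0,2r)$. Without loss of generality I would assume $w\ge 0$: replacing $w$ by $w^+$ preserves both the zero trace on $S(x_0,2r)$ and the inequality \eqref{eq4.18} (the latter follows by testing \eqref{eq4.18} against $\varphi\cdot g_\delta(w)$, where $g_\delta$ is a smooth approximation of $\mathbf 1_{\{t>0\}}$, and letting $\delta\to 0$; the extra term involving $g_\delta'(w)$ has the favorable sign by \eqref{eq1.2}).

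The first main step is to rerun the Moser iteration of Lemma~\ref{le4.1} using test functions of the form $h=v^{\alpha-1}(w+k)\varphi^2$ with $\varphi\in C^\infty_{\mathrm{c}}(B(x_0,2r))$. The crucial geometric observation is that $S(x_0,2r)\cap B(x_0,2r)=\emptyset$: by Lemma~\ref{le2.8}, $B(x_0,2r)\cap\Omega\subset T(x_0,2r)$, so every point of $\partial T(x_0,2r)\cap\Omega$ must lie outside $B(x_0,2r)$. Hence $\mathrm{Tr}(h)=0$ on $S(x_0,2r)$ and $h$ is an admissible test function in \eqref{eq4.18}. The iteration proceeds verbatim as in Lemma~\ref{le4.1} and yields
\begin{equation*}
\sup_{B(x_0,r)\cap\Omega}w\le C\left[\|\tau\|_{L^q(\Gamma(x_0,2r),\sigma)}+\left(\fint_{B(x_0,2r)\cap\Omega}w^2\,dx\right)^{1/2}\right].
\end{equation*}

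The second main step is to absorb the $L^2$-average. I would test \eqref{eq4.18} with $\varphi=w$ itself (admissible because $\mathrm{Tr}(w)=0$ on $S(x_0,2r)$) to obtain, via \eqref{eq1.2} and H\"older's inequality,
\begin{equation*}
\mu_0\|\nabla w\|_{L^2(T(x_0,2r))}^2\le\|\tau\|_{L^q(\Gamma(x_0,2r),\sigma)}\,\|\mathrm{Tr}(w)\|_{L^{q'}(\Gamma(x_0,2r),\sigma)}.
\end{equation*}
Since $(T(x_0,2r),\sigma_*)$ satisfies Assumption~\ref{Ass} with uniform geometric constants (Lemma~\ref{le2.8}), applying Theorem~\ref{th2.1}(iii) to the tent domain together with a Poincar\'e--Friedrichs inequality derived from Lemma~\ref{le2.6} (with $E\subset S(x_0,2r)$, so that the boundary $L^2$ term vanishes by the zero trace of $w$) and carefully tracking the $r$-scaling yields
\begin{equation*}
\|\mathrm{Tr}(w)\|_{L^{q'}(\Gamma(x_0,2r),\sigma)}\le Cr^{s/q'-(n-2)/2}\|\nabla w\|_{L^2(T(x_0,2r))}.
\end{equation*}
Chaining back with the Poincar\'e inequality on $T(x_0,2r)$ gives
$(\fint_{B(x_0,2r)\cap\Omega}w^2\,dx)^{1/2}\le Cr^{s/q'+2-n}\|\tau\|_{L^q(\Gamma(x_0,2r),\sigma)}$. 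The hypothesis $q\in[s/(s+2-n),\infty]$ forces $s/q'+2-n\ge 0$, so this factor is controlled by a geometric constant (using $r\le\mathrm{diam\,}(\Omega)/(4K_0)$), and \eqref{eq4.19} follows.

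The main technical obstacle is the second step: transferring the trace embedding and Poincar\'e--Friedrichs inequality to the tent domain $T(x_0,2r)$ with the correct $r$-scaling, and verifying that the critical exponent $s/q'+2-n$ is non-negative exactly when $q\ge s/(s+2-n)$, which is precisely the hypothesis \eqref{eq1.1}. This sharp threshold is what permits the $L^2$-average to be absorbed uniformly in $r$; the endpoint case $q=s/(s+2-n)$, where this exponent vanishes, is the tightest point of the argument.
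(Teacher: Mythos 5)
Your proposal is correct, and for the core estimates it coincides with the paper's argument: both run the Moser iteration of Lemma \ref{le4.1} on the tent domain (using exactly your observation that test functions supported in $B(x_0,2r)$ automatically have vanishing trace on $S(x_0,2r)$ because $B(x_0,2r)\cap\Omega\subset T(x_0,2r)$), and both absorb the resulting $L^2$-average by testing \eqref{eq4.18} with the solution itself, invoking the trace embedding of Theorem \ref{th2.1}(iii) on $(T(x_0,2r),\sigma_*)$ and a Poincar\'e--Friedrichs inequality coming from the vanishing trace on $S(x_0,2r)$ (the paper uses Lemmas \ref{le2.3} and \ref{le2.4}, you use Lemma \ref{le2.6}; these are interchangeable here). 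Your bookkeeping $s/q'+2-n\ge0\iff q\ge s/(s+2-n)$ correctly identifies the role of the hypothesis on $q$, and is in fact more careful about the $r$-scaling than the paper, which simply writes $\lesssim$ throughout. The one genuine divergence is the treatment of the sign of $w$. You truncate: $(w)^+$ still has zero trace on $S(x_0,2r)$ and is again a subsolution in the sense of \eqref{eq4.18}; your $g_\delta$ argument is the standard one, the discarded term $\int\varphi g_\delta'(w)A\nabla w\cdot\nabla w\,dx\ge0$ having the favorable sign by \eqref{eq1.2} (one should restrict to bounded $\varphi$ so that $\varphi g_\delta(w)\in W^{1,2}$, which costs nothing since every test function actually used in the iteration and the energy estimate is bounded). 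The paper instead solves the partial Neumann problem on $T(x_0,2r)$ with data $\tau$ via Lemma \ref{le2.7}, shows that solution $v$ is nonnegative by testing with $v^-$, proves $w\le v$ by a weak comparison argument, and applies the nonnegative case to $v$. Your truncation is shorter and avoids Lemma \ref{le2.7} entirely; the paper's comparison yields the pointwise domination $w\le v$, which is slightly stronger but not needed for \eqref{eq4.19}. Either route closes the proof.
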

	
\begin{proof}
We first consider the case that $u\ge0$. In this case,
applying the assumption \eqref{eq4.18}, we then find that
\begin{equation}\label{eq4.20}
\int_{T(x_0,2r)}|\nabla u|^2\,dx \lesssim \int_{T(x_0,2r)}
A\nabla u \cdot \nabla u\,dx\lesssim\int_{\Gamma(x_0,2r)}\tau u\,d\sigma.
\end{equation}	
Moreover, by H\"older's inequality, Theorem \ref{th2.1}(iii),
assumption ${\rm Tr}(u)=0$ on $S(x_0,2r)$, and Poincar\'e's inequality as in Lemma \ref{le2.5} (see also \cite[Theorem 7.1]{DFM23}),
we conclude that
\begin{align*}
\int_{\Gamma(x_0,2r)} \tau u\,d\sigma&\le\|\tau\|_{L^q(\Gamma(x_0,2r),\sigma)}
\left[\int_{\Gamma(x_0,2r)}|u|^{q'}\,d\sigma\right]^{\frac1{q'}}\notag \\
&\lesssim \|\tau\|_{L^q(\Gamma(x_0,2r),\sigma)}  \|u\|_{W^{1,2}(T(x_0,2r))}\notag \\
& \lesssim\|\tau\|_{L^q(\Gamma(x_0,2r),\sigma)}\left[\int_{T(x_0,2r)}
|\nabla u|^2\,dx\right]^{\frac12},
\end{align*}
which, combined with \eqref{eq4.20}, implies that		
\begin{equation}\label{eq4.21}
\left[\int_{T(x_0,2r)}|\nabla u|^2\,dx\right]^{\frac12}\lesssim \|\tau\|_{L^q(\Gamma(x_0,2r),\sigma)}.
\end{equation}
Furthermore, from Lemma \ref{le4.1}, we infer that
\begin{equation}\label{eq4.22}
\sup\limits_{x\in B(x_0,r)\cap\Omega} u(x) \lesssim
\left[\fint_{T(x_0,2r)} u^2\,dx\right]^{\frac12}+ \|\tau\|_{L^q(\Gamma(x_0,2r),\sigma)}.
\end{equation}
Let $\overline{u}:=\fint_{T(x_0,2r)\cap B(x_0,2r)}u\,dx$. Then, applying Poincar\'e's inequality in Lemma \ref{le2.3} with
$\Omega:=T(x_0,2r)$, $p:=2$, and $E:=T(x_0,2r)\cap B(x_0,2r)$, we conclude that
\begin{equation}\label{eq4.23}
\fint_{T(x_0,2r)} u^2\,dx \lesssim\overline{u}^2 + \fint_{T(x_0,2r)} |u-\overline{u}|^2\,dx \lesssim\overline{u}^2 +\fint_{T(x_0,2r)}
|\nabla u|^2\,dx.
\end{equation}
Meanwhile, from Lemma \ref{le2.4}, we further deduce that
\begin{equation}\label{eq4.24}
\overline{u}^2 =\fint_{S(x_0,2r)}|\mathrm{Tr} (u) -
\overline{u}|^2\,d\sigma_\ast
\lesssim\fint_{\partial T(x_0,2r)}
|{\rm Tr}(u)-\overline{u}|^2\,d\sigma_\ast
\lesssim\fint_{T(x_0,2r)} |\nabla u|^2\,dx.
\end{equation}
Thus, by \eqref{eq4.21}, \eqref{eq4.23}, and \eqref{eq4.24},
we find that
$$ \fint_{T(x_0,2r)} u^2\,d\sigma\lesssim\fint_{T(x_0,2r)}|\nabla u|^2\,dx
\lesssim\|\tau\|^2_{L^q(\Gamma(x_0,2r),\sigma)},$$
which, combined with \eqref{eq4.22}, further implies that \eqref{eq4.19}
holds in the case that $u\ge0$.

Next, we consider the general case for $u$. Applying Lemma \ref{le2.7}
to $(T(x_0,2r),\sigma_\ast)$, we conclude that there exists a function $v\in W^{1,2}(T(x_0,2r))$
satisfying $\mathrm{Tr}(v)= 0$ on $S(x_0,2r)$ such that, for any $\varphi \in W^{1,2}(\Omega)$ with $\varphi \ge 0$
on $T(x_0,2r)$ and $\mathrm{Tr}(\varphi) = 0$ on $S(x_0,2r)$,
\begin{equation}\label{eq4.25}
\int_{T(x_0,2r)} A\nabla v \cdot \nabla \varphi\,dx = \int_{\Gamma(x_0,2r)} \tau \varphi\,d\sigma.
\end{equation}
By taking $\varphi= v^-:= \max\{-v,0\}$ in \eqref{eq4.25} and using the non-negativity
of $\tau$, we further conclude that $v$ is non-negative on $T(x_0,2r)$.
Therefore, estimate \eqref{eq4.19} holds for $v$.
Let $w := v-u$.	From the assumption that $u$ and $v$ respectively satisfy \eqref{eq4.18} and \eqref{eq4.25}, we deduce that,
for any $\varphi \in W^{1,2}(\Omega)$ with $\varphi \ge 0$
on $T(x_0,2r)$ and $\mathrm{Tr}(\varphi) = 0$ on $S(x_0,2r)$,
$$ \int_{T(x_0,2r)} A\nabla w\cdot \nabla \varphi\,dx \ge 0. $$
Taking $\varphi= w^-:=\max\{-w,0\}$, we then find that
$$ 0\le \int_{T(x_0,2r)} A\nabla w\cdot \nabla w^-\,dx = -\int_{T(x_0,2r)} A\nabla w^- \cdot \nabla w^-\,dx\le0,$$
which implies that $\nabla w^{-}=0$ on $T(x_0,2r)$. From this and $w^-=0$ on $S(x_0,2r)$,
we infer that $w^-=0$ on $T(x_0,2r)$. Thus, $w\ge 0$ on $T(x_0,2r)$, that is, $u\le v$ on $T(x_0,2r)$.
By this and the fact that \eqref{eq4.19} also holds for $v$, we conclude that \eqref{eq4.19} also holds for $u$.
This finishes the proof of Lemma \ref{le4.2}.
\end{proof}

\begin{lemma}\label{le4.3}
Let $n\ge 2,\ s\in (n-2,n)$, and $(\Omega,\sigma)$ satisfy Assumption \ref{Ass} with $\Omega$ being bounded.
Assume that $x_0\in\partial\Omega$ and $r\in(0,\mathrm{diam\,}(\Omega))$.
Then there exist constants $K_1\in(1,\infty)$ depending only on the geometric constants for $(\Omega,\sigma)$ and
$C\in(0,\infty)$ depending only on the geometric constants for $(\Omega,\sigma)$ and $\mu_0$ in \eqref{eq1.2} such that,
if $0\le u\in W^{1,2}(B(x_0,K_1r)\cap\Omega)$ and, for any $\varphi \in W^{1,2}(B(x_0,K_1r)\cap\Omega)$ satisfying
$\varphi\ge 0$ and $\varphi\equiv0$ on $\Omega\backslash B(x_0,\rho)$ for some $\rho\in(0,K_1r)$,
\begin{equation*}
\int_{B(x_0,K_1r)\cap\Omega} A\nabla u\cdot \nabla \varphi\,dx\le0,
\end{equation*}
then, for any $y\in B(x_0,r/4)\cap\Omega$, $u(y)\le Cu(y_{x_0}).$
Here $y_{x_0}$ denotes a corkscrew point with respect to $\Omega\cap B(x_0,r/4)$.
\end{lemma}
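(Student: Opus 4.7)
The plan is to establish Lemma~\ref{le4.3} by combining the Moser-type $L^{\infty}$-$L^2$ estimate of Lemma~\ref{le4.1} with the Harnack chain property (A3) and a measure-theoretic estimate of the near-boundary region, together with a Moser-type absorption argument.

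First, I would choose $K_1$ sufficiently large (depending on $K_0$ from Lemma~\ref{le2.8}) so that the hypothesis $\int_{B(x_0,K_1r)\cap\Omega} A\nabla u\cdot\nabla\varphi\,dx\le 0$ for non-negative test functions supported in balls $B(x_0,\rho)$ with $\rho<K_1r$ puts $u$ into the setting of Lemma~\ref{le4.1} with $\tau\equiv 0$. At scale $r/2$ this yields the initial estimate
$$
M:=\sup_{B(x_0,r/4)\cap\Omega}u\le C\left[\fint_{B(x_0,r/2)\cap\Omega}u^2\,dx\right]^{1/2}.
$$

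Next, I would split $B(x_0,r/2)\cap\Omega=\Omega_{\mathrm{int}}\cup\Omega_{\mathrm{bdry}}$, where $\Omega_{\mathrm{int}}:=\{z\in B(x_0,r/2)\cap\Omega:\delta(z)\ge cr\}$ and $\Omega_{\mathrm{bdry}}$ is its complement, for a small parameter $c\in(0,1)$ to be fixed later. On $\Omega_{\mathrm{int}}$, the Harnack chain condition (A3) provides, for each $z\in\Omega_{\mathrm{int}}$, a chain of $N=N(c)$ overlapping interior balls joining $z$ to the corkscrew point $y_{x_0}$, with geometry depending only on $c$ and the geometric constants for $(\Omega,\sigma)$. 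Repeated application of the interior Harnack inequality for non-negative solutions of $-\mathrm{div}(A\nabla u)=0$ along this chain yields $u(z)\le C_1(c)\,u(y_{x_0})$ for all $z\in\Omega_{\mathrm{int}}$. For $\Omega_{\mathrm{bdry}}$, the $s$-Ahlfors regularity (A1) with $s\in(n-2,n)$ together with a standard covering argument gives $|\Omega_{\mathrm{bdry}}|\lesssim(cr)^{n-s}r^s$, while the interior corkscrew condition (A2) gives $|B(x_0,r/2)\cap\Omega|\gtrsim r^n$, so that $|\Omega_{\mathrm{bdry}}|/|B(x_0,r/2)\cap\Omega|\lesssim c^{n-s}$.

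Combining these ingredients,
$$
M^2\le C\left[C_1(c)^2\,u(y_{x_0})^2+c^{n-s}M^2\right],
$$
and by choosing $c$ small enough that $Cc^{n-s}\le 1/2$ the last term can be absorbed into the left-hand side, yielding $u(y)\le M\le C\,u(y_{x_0})$ with $C$ depending only on the geometric constants for $(\Omega,\sigma)$ and $\mu_0$, as desired.

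The main obstacle is the application of the interior Harnack inequality in the interior region, since the hypothesis formally provides only the subsolution inequality $\int A\nabla u\cdot\nabla\varphi\,dx\le 0$ for non-negative test functions, whereas the full Harnack inequality requires both sub- and super-solution structure. This gap should be bridged by comparing $u$ with an auxiliary solution of the corresponding homogeneous Neumann problem on an interior subdomain via the weak maximum principle: the non-positivity of the Neumann flux ensures $u$ lies below such a comparison function, which in turn satisfies the classical interior Harnack estimate. A secondary subtlety is quantitative: $C_1(c)$ grows with the chain length $N(c)\sim|\log c|$, but since only $c^{n-s}$ (and not $C_1(c)$) appears in the absorption term $c^{n-s}M^2$, the final constant is controlled uniformly once $c$ is fixed.
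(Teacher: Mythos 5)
First, a point of reference: the paper does not prove this lemma at all --- its entire ``proof'' is the citation \cite[Lemma~3.3]{DDEMM24} --- so your argument has to stand on its own, and it does not. The first gap is the one you flag but do not repair: the hypothesis only makes $u$ a \emph{subsolution} in the interior (test with $\varphi\in C_{\rm c}^\infty(B(x_0,K_1r)\cap\Omega)$, $\varphi\ge0$), so the two-sided interior Harnack inequality, and hence the Harnack-chain bound $u(z)\le C_1(c)\,u(y_{x_0})$ on $\Omega_{\mathrm{int}}$, is unavailable. Your proposed repair --- dominate $u$ by an auxiliary Neumann solution $v$ and apply Harnack to $v$ --- yields $\sup u\le\sup v\le C\,v(y_{x_0})$, but $v(y_{x_0})\ge u(y_{x_0})$ goes the wrong way, and there is no route back to $u(y_{x_0})$. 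Indeed, for the literal subsolution hypothesis the conclusion is false: on $\Omega=B(0,1)\cap\{x_n>0\}$ with $A=I$, $x_0=0$, the function $u=(x_1)^+$ is non-negative, lies in $W^{1,2}$, and satisfies $\int_\Omega\nabla u\cdot\nabla\varphi\,dx=-\int_{\{x_1=0\}\cap\Omega}\varphi\,d\mathcal{H}^{n-1}\le0$ for every admissible $\varphi$, yet it vanishes at the corkscrew point $(0,\dots,0,r/8)$ while being positive elsewhere in $B(x_0,r/4)\cap\Omega$. The lemma is only tenable (and is only ever applied in this paper) when $u$ is an exact solution in the interior with non-positive conormal flux on $\partial\Omega$; under that reading the interior Harnack step is legitimate as stated and needs no comparison function, but you must make that reading explicit.

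The second, and independent, gap is the absorption. After the Moser estimate the boundary-layer contribution is $c^{n-s}\sup_{B(x_0,r/2)\cap\Omega}u^2$, not $c^{n-s}M^2$ with $M=\sup_{B(x_0,r/4)\cap\Omega}u$: the supremum you would need to absorb lives on a strictly larger ball than the one you are estimating, and a supremum over a larger set cannot be hidden in one over a smaller set. The standard fixes do not close this. Iterating over intermediate radii $r/4\le t_1<t_2\le r/2$ makes the Moser constant blow up like $(r/(t_2-t_1))^{n/2}$ while the gain $c^{(n-s)/2}$ stays fixed, so the coefficient of $\sup_{B(x_0,t_2)\cap\Omega}u$ is not uniformly below $1$ and the Giaquinta-type absorption lemma does not apply; iterating upward through dyadic scales instead terminates only at scale $\sim K_1r$ with a small but fixed multiple of $\sup_{B(x_0,K_1r/2)\cap\Omega}u$, a quantity that is in no way controlled by $u(y_{x_0})$ since $u$ may be huge near $\partial B(x_0,K_1r)$. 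So the split-and-absorb scheme does not prove the estimate; a genuinely different mechanism is needed to pass from the $L^2$ average to the value at the corkscrew point, which is precisely what the cited \cite[Lemma~3.3]{DDEMM24} supplies. (The pieces of your argument that are sound are the measure bound $|\Omega_{\mathrm{bdry}}|\lesssim c^{n-s}r^n$ from Ahlfors regularity and the lower volume bound from the corkscrew condition.)
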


Lemma \ref{le4.3} is precisely \cite[Lemma 3.3]{DDEMM24}.

Moreover, we also need the following boundary Harnack inequality for the local Neumann problem established
in \cite[Theorem 3.1]{DDEMM24}.

\begin{lemma}\label{le4.4}
Let $n\ge 2,\ s\in (n-2,n)$, and $(\Omega,\sigma)$ satisfy Assumption \ref{Ass} with $\Omega$ being bounded.
Assume that $x_0\in\partial\Omega$ and $r\in(0,\mathrm{diam\,}(\Omega))$.
Then there exist constants $K_2\in[1,\infty)$ depending only on the geometric constants for $(\Omega,\sigma)$ and
$\theta\in(0,1)$ depending only on the geometric constants for $(\Omega,\sigma)$ and $\mu_0$ in \eqref{eq1.2} such that,
if $0\le u\in W^{1,2}(B(x_0,K_2r)\cap\Omega)$ and, for any $\varphi \in W^{1,2}(B(x_0,K_2r)\cap\Omega)$ satisfying
$\varphi\ge 0$ and $\varphi\equiv0$ on $\Omega\backslash B(x_0,\rho)$ for some $\rho\in(0,K_2r)$,
\begin{equation*}
\int_{B(x_0,K_2r)\cap\Omega} A\nabla u\cdot \nabla \varphi\,dx=0,
\end{equation*}
then
\begin{equation*}
\inf_{x\in B(x_0,r)\cap\Omega}u(x)\ge \theta \sup_{x\in B(x_0,r)\cap\Omega}u(x).
\end{equation*}
\end{lemma}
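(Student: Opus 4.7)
The plan is to adapt the classical Moser iteration scheme to the NTA setting, combining a local maximum principle with a reverse bound derived from a logarithmic substitution and a John–Nirenberg step on the underlying doubling space, following the strategy behind \cite[Theorem 3.1]{DDEMM24}. Throughout, I would take $K_2\in[1,\infty)$ sufficiently large to exceed the constant $K_1$ from Lemma \ref{le4.3} and the Poincar\'e constants $K$ from Lemmas \ref{le2.3}, \ref{le2.4}, and \ref{le2.6}; the tent-domain construction in Lemma \ref{le2.8} then localizes the problem to a subdomain retaining both NTA and Ahlfors structure, so that every Poincar\'e-type inequality used below is available with geometric constants.

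First I would establish the local boundedness estimate
\[
\sup_{B(x_0,r)\cap\Omega}u\lesssim\left(\fint_{B(x_0,2r)\cap\Omega}u^{2}\,dx\right)^{1/2}
\]
by a Moser iteration essentially identical to Lemma \ref{le4.1}, but with the boundary term dropped because the right-hand side is zero. Concretely, one tests the weak formulation with $\varphi=(u+\varepsilon)^{\alpha-1}\eta^{2}$ for $\alpha\ge 1$ and a Lipschitz cut-off $\eta$ supported in concentric balls, applies the Sobolev–Poincar\'e inequality of Lemma \ref{le2.5}, and iterates. Next I would derive the complementary lower bound by testing with $\varphi=\eta^{2}/(u+\varepsilon)$, which yields the Caccioppoli-type estimate
\[
\int_{B(x_0,r)\cap\Omega}|\nabla\log(u+\varepsilon)|^{2}\eta^{2}\,dx\lesssim\int_{B(x_0,r)\cap\Omega}|\nabla\eta|^{2}\,dx.
\]
Combined with the intrinsic Poincar\'e inequality of Lemma \ref{le2.3}, this places $\log(u+\varepsilon)$ in BMO on the doubling metric measure space $B(x_0,r)\cap\Omega$ with geometric norm, and a John–Nirenberg argument produces a small $p_{0}>0$ for which the averages $(\fint(u+\varepsilon)^{p_{0}})^{1/p_{0}}$ and $(\fint(u+\varepsilon)^{-p_{0}})^{-1/p_{0}}$ are comparable. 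A reverse Moser iteration (test functions $(u+\varepsilon)^{\alpha-1}\eta^{2}$ for $\alpha<0$) converts the negative-moment average into $\inf_{B(x_0,r)\cap\Omega}(u+\varepsilon)$, and interpolation between $p_{0}$ and $2$ via H\"older's inequality bridges the exponents; letting $\varepsilon\to 0$ combines the two halves into the desired Harnack inequality with constant $\theta\in(0,1)$.

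The main obstacle is ensuring that the John–Nirenberg and reverse Moser steps close with uniformly geometric constants on balls intersected with the rough set $\Omega$: both the doubling property and the Poincar\'e constants must be controlled on every concentric ball appearing in the iteration. This is resolved by working inside the tent domain supplied by Lemma \ref{le2.8}, which preserves NTA and Ahlfors regularity, and by invoking Lemmas \ref{le2.3} and \ref{le2.6} to keep all Sobolev–Poincar\'e-type constants geometric. Finally, to pass from a corkscrew-point comparison to a uniform infimum over $B(x_0,r)\cap\Omega$, I would invoke Lemma \ref{le4.3} together with the Harnack chain condition (A3) of Assumption \ref{Ass}, which propagates pointwise control from a fixed corkscrew point $y_{x_{0}}$ of $\Omega\cap B(x_{0},r/4)$ to every point in $B(x_{0},r)\cap\Omega$, yielding the final constant $\theta$ depending only on the geometric constants for $(\Omega,\sigma)$ and $\mu_{0}$.
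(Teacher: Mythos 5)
The paper offers no proof of Lemma \ref{le4.4}: it is quoted directly as the boundary Harnack inequality for the local Neumann problem established in \cite[Theorem 3.1]{DDEMM24}, so there is no in-paper argument to compare against line by line. Your sketch reconstructs the expected argument behind that citation, and its overall route is the standard and correct one for this statement: Moser iteration (with the boundary term absent) for the local boundedness, the logarithmic Caccioppoli estimate obtained by testing with $\eta^{2}/(u+\varepsilon)$ plus the Poincar\'e inequality to place $\log(u+\varepsilon)$ in BMO, a John--Nirenberg (or Bombieri--Giusti) crossing between small positive and negative powers, a reverse Moser iteration for the infimum, all run inside the tent domain of Lemma \ref{le2.8} so that doubling, Poincar\'e constants, and quantitative connectivity remain geometric.

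Two caveats. First, your closing step is wrong as stated: Lemma \ref{le4.3} controls the supremum by the value at a corkscrew point, and the Harnack chain condition (A3) cannot ``propagate pointwise control to every point of $B(x_0,r)\cap\Omega$'', because the number $N$ of balls in an $(M,N)$-chain blows up as the target point approaches $\partial\Omega$, so this mechanism cannot yield a uniform lower bound up to the boundary. It is, however, redundant: the weak-Harnack half of your own scheme (reverse Moser plus the crossing) already bounds $\operatorname{ess\,inf}_{B(x_0,r)\cap\Omega}(u+\varepsilon)$ from below by the small positive-power average, which is exactly what is needed, so the last paragraph should be deleted or replaced by a routine covering/rescaling remark reconciling the radii. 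Second, two real but standard details are glossed over: the Poincar\'e inequalities of Lemmas \ref{le2.3} and \ref{le2.4} are stated only for balls centered on $\partial\Omega$, so the BMO/John--Nirenberg step requires the corresponding inequality for balls centered at arbitrary points of $\overline{\Omega}$ (interior balls are classical, and balls near the boundary reduce to boundary-centered ones by doubling); and bridging from the $L^{p_0}$ average (with $p_0$ possibly small) to the $L^{2}$ average in the sup bound is not just H\"older's inequality, it needs the usual absorption argument via Young's inequality together with an iteration-over-nested-radii lemma. With these points repaired, your sketch is a sound proof of the lemma, consistent with the method of the cited source.
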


Now, we prove Theorem \ref{th1.2} by using Lemmas \ref{le4.1}, \ref{le4.2}, \ref{le4.3}, and \ref{le4.4}.
	
\begin{proof}[Proof of Theorem \ref{th1.2}]
We show Theorem \ref{th1.2} by borrowing some ideas from the proof of \cite[Theorem 4.4]{DDEMM24}.
Let $K\in(1,\infty)$ be such that $K\ge\max\{K_0,K_1,K_2\}$, where the constants $K_0$, $K_1$, and $K_2$ are respectively
as in Lemmas \ref{le2.8}, \ref{le4.3}, and \ref{le4.4}, $x_0\in\partial\Omega$, and $r\in(0,\mathrm{diam\,}(\Omega)/(4K)]$.
Assume that $u\in W^{1,2}(B(x_0,2K^2r)\cap\Omega)$ is a non-negative weak solution to the local Robin problem \eqref{eq1.6}.
Applying Lemma \ref{le2.7}, we conclude that there exists $h\in W^{1,2}(T(x_0,2Kr))$ satisfying
$\mathrm{Tr}(h)=0$ on $S(x_0,2Kr)$ such that, for any $\varphi \in W^{1,2}(T(x_0,2Kr))$ with $\varphi \ge 0$ on $T(x_0,2Kr)$
and $\mathrm{Tr}(\varphi) = 0$ on $S(x_0,2Kr)$,
\begin{equation}\label{eq4.26}
\int_{T(x_0,2Kr)} A\nabla h\cdot \nabla \varphi\,dx =-\int_{\Gamma(x_0,2Kr)}\beta u\varphi\,d\sigma.
\end{equation}
Then, by taking $\tau\equiv0$ in Lemma \ref{le4.2} and using the fact that $-\beta u\le0$ on $\Gamma(x_0,2Kr)$
and Lemma \ref{le4.2}, we find that
\begin{align}\label{eq4.27}
\sup\limits_{x\in B(x_0,r)\cap\Omega} h(x) \le0.
\end{align}
		
Moreover, from \eqref{eq4.26}, it follows that $-h$ satisfies
\begin{equation*}
\int_{T(x_0,2Kr)} A\nabla (-h)\cdot \nabla \varphi\,dx = \int_{\Gamma (x_0,2Kr)}\beta u \varphi\,d\sigma.
\end{equation*}
Similarly, by taking $\tau:=\beta u$ in Lemma \ref{le4.2} and applying Lemma \ref{le4.2}
 again, we conclude that
\begin{align}\label{eq4.28}
\sup\limits_{x\in B(x_0,r)\cap\Omega}(-h)(x)\le C\|\beta u\|_{L^{q_0}(\Gamma(x_0,2r),\sigma)}.
\end{align}
From \eqref{eq1.4} and \eqref{eq1.7}, we deduce that
$$\|\beta u\|_{L^{q_0}(\Gamma(x_0,2r),\sigma)}\le \|\beta\|_{L^{q_0}(\Gamma (x_0,2r),\sigma)}
\|\mathrm{Tr}(u)\|_{L^\infty(\Gamma(x_0,2r),\sigma)}
\le c_0 \sup\limits_{x\in B(x_0,2Kr)\cap\Omega} u(x),$$
which, combined with \eqref{eq4.28}, implies that
\begin{align*}
\sup\limits_{x\in B(x_0,r)\cap\Omega} (-h)(x)\le C c_0 \sup\limits_{x\in B(x_0,2Kr)\cap\Omega} u(x).
\end{align*}
This, together with \eqref{eq4.27}, further yields that there exists a positive constant $C_3$ such that
\begin{align}\label{eq4.29}
\|h\|_{L^{\infty}(B(x_0,r)\cap\Omega)}\le C_3c_0\sup\limits_{x\in B(x_0,2Kr)\cap\Omega} u(x).
\end{align}
		
Finally, notice that $u-h\in W^{1,2}(T(x_0,2Kr))$ satisfies
$$ \int_{T(x_0,2Kr)} A\nabla (u-h)\cdot \nabla \varphi \,dx = 0. $$
Thus, applying Lemma \ref{le4.4} to $w:= u-h +\|h\|_{L^{\infty}(B(x_0,r)\cap\Omega)}$,
we then find that there exists a positive constant $\theta \in (0,1)$ such that
\begin{align}\label{eq4.30}
\inf\limits_{x\in B(x_0,r)\cap\Omega} w(x)\ge\theta\sup\limits_{x\in B(x_0,r)\cap\Omega} w(x).
\end{align}
Moreover, from Lemma \ref{le4.3}, Assumption (A3), and the interior Harnack inequality (see, for instance, \cite[Theorem 8.20]{GT83}),
it follows that there exists a positive constant $C_4\in(0,1]$ depending only on the geometric constants for
$(\Omega,\sigma)$ such that
$$\sup\limits_{x\in B(x_0,r)\cap\Omega}u(x)\ge C_4\sup\limits_{x\in B(x_0,2Kr)\cap\Omega} u(x),$$
which, combined with \eqref{eq4.29} and \eqref{eq4.30}, implies that
\begin{align}\label{eq4.31}
\inf\limits_{x\in B(x_0,r)\cap\Omega}u(x) &\ge \inf\limits_{x\in B(x_0,r)\cap\Omega} w(x) -2\|h\|_{L^{\infty}(B(x_0,r)\cap\Omega)} \notag \\
&\ge\theta \sup\limits_{x\in B(x_0,r)\cap\Omega} w(x) -2\|h\|_{L^{\infty}(B(x_0,r)\cap\Omega)}\notag \\
&\ge C_4\theta \sup\limits_{x\in B(x_0,2Kr)\cap\Omega} u(x) -2\|h\|_{L^{\infty}(B(x_0,r)\cap\Omega)} \notag \\
&\ge (C_4\theta -C_3c_0)\sup\limits_{x\in B(x_0,2Kr)\cap\Omega} u(x)\ge(C_4\theta -C_3c_0) \sup\limits_{x\in B(x_0,r)\cap\Omega} u(x).
\end{align}
Take $c_0$ small enough such that $C_3c_0 =C_4\theta/2$ in \eqref{eq4.31} and let $\eta:=C_4\theta/2\in(0,1)$. Then,
by \eqref{eq4.31}, we conclude that
\begin{align*}
\inf\limits_{x\in B(x_0,r)\cap\Omega}u(x)\ge\eta\sup\limits_{x\in B(x_0,r)\cap\Omega} u(x).
\end{align*}	
This finishes the proof of \eqref{eq1.8} and hence Theorem \ref{th1.2}.
\end{proof}
	
\section{Proof of Theorem \ref{th1.3}\label{S4}}
	
In this section, we prove the existence and regularity estimates of Green's functions for the Robin problem \eqref{Robin-Pro}
and further give the proof of Theorem \ref{th1.3}.
	
\begin{proof}[Proof of Theorem \ref{th1.3}]
Let $g\in L^2(\Omega)$. A function $v\in W^{1,2}(\Omega)$ is called a \emph{weak solution} to the Robin problem
\begin{equation}\label{eq5.1}
\begin{cases}
-\mathrm{div}(A\nabla v) = g~~&\text{in}~\Omega,\\
A\nabla v\cdot \boldsymbol{\nu} + \beta v=0~~&\text{on}~\partial \Omega
\end{cases}
\end{equation}
if, for any $\varphi\in C_{\rm c}^{\infty}(\mathbb{R}^n)$,
\begin{equation*}
\int_{\Omega} A\nabla v \cdot \nabla \varphi\,dx + \int_{\partial \Omega}\beta \mathrm{Tr}(v)\varphi\,d\sigma = \int_{\Omega}g\varphi\,dx.
\end{equation*}
Applying Lemma \ref{le2.1}, Theorem \ref{th2.1}, and the Lax--Milgram theorem
and repeating the proof of Theorem \ref{th2.2}, we conclude that there exists a unique weak solution $v\in W^{1,2}(\Omega)$
to the Robin problem \eqref{eq5.1} and
\begin{equation}\label{eq5.2}
\|v\|_{W^{1,2}(\Omega)} \le C \|g\|_{L^2(\Omega)}
\end{equation}
with $C$ being a positive constant depending only on $n$, the geometric constants for $(\Omega,\sigma)$, $E_0$ in \eqref{eq1.1}, and $\mu_0$ in \eqref{eq1.2}.	
		
Then, using \eqref{eq5.2} and following the standard approach to construct Green's function,
developed by Gr\"uter and Widman in the proof of \cite[Theorem (1.1)]{gw82} (see also the proof of \cite[Theorem 5.6]{DDEMM24}),
we obtain the existence of Green's function $G_R$ for the Robin problem \eqref{Robin-Pro}; we omit the details here.
		
Moreover, repeating the proof of \cite[(5.4)]{DDEMM24}, we then find that the representation formula
\eqref{eq1.10} for the weak solution to the Robin problem \eqref{Robin-Pro} holds.
		
Next, we prove the regularity estimates \eqref{eq1.11} and \eqref{eq1.12}.
Let $R_0:= \mathrm{diam\,}(\Omega)$. By checking the proof of \cite[Theorem 3.3(i)]{CK14} carefully,
we find that the conclusion of \cite[Theorem 3.3]{CK14} remain holds when the domain $\Omega$ satisfies Assumption \ref{Ass}.
Thus, from \cite[Theorems 3.2 and 3.3]{CK14}, we deduce that, for any $x,y\in\Omega$ and any $t\in(0,\infty)$,
\begin{equation}\label{eq5.3}
|p_{t,L_R}(x,y)|\lesssim \frac{1}{\min\{t^{n/2}, R_0^n\}}\exp\left(-\frac{|x-y|^2}{ct}\right),
\end{equation}
where $\{p_{t,L_R}\}_{t\in(0,\infty)}$ denote the heat kernels associated with the Robin problem \eqref{eq5.1}
and $c\in(0,\infty)$ is a positive constant depending only on $\mu_0$ in \eqref{eq1.2}.
Moreover, by the well-known relation between the heat kernels $\{p_{t,L_R}\}_{t\in(0,\infty)}$ and the Green function
$G_R$ (see, for instance, \cite[Theorem 2.12 and (3.11)]{DK09}), we find that, for any $x,y\in\Omega$ with $x\neq y$,
$$ G_R(x,y)= \int_{0}^{\infty} p_{t,L_R}(x,y)\,dt. $$
Thus, for any $x,y\in\Omega$ with $x\neq y$,
\begin{equation}\label{eq5.4}
0\le G_R(x,y) \le \int_{0}^{|x-y|^2} + \int_{|x-y|^2}^{2R_0^2} + \int_{2R_0^2}^{\infty} |p_{t,L_R}(x,y)|\,dt=:I_1+I_2
+I_3.
\end{equation}
For $I_1$, using \eqref{eq5.3}, we obtain
\begin{align}\label{eq5.5}
I_1 &\lesssim\int_{0}^{|x-y|^2}\frac{1}{t^{n/2}}\exp\left(-\frac{|x-y|^2}{ct}\right)\,dt
\lesssim\int_{0}^{|x-y|^2}\frac{1}{|x-y|^n}\,dt \lesssim \frac{1}{|x-y|^{n-2}}.
\end{align}
For $I_2$, by \eqref{eq5.3} again, we conclude that
\begin{align}\label{eq5.6}
I_2\lesssim\int_{|x-y|^2}^{2R_0^2} t^{-n/2}\,dt
\lesssim \begin{cases}
1+\log\left(\frac{R_0}{|x-y|}\right)&\text{when}~n=2,\\
|x-y|^{2-n}&\text{when}~n\ge 3.
\end{cases}
\end{align}
Moreover, from \cite[(5.48)]{CK14} (see also \cite[(3.59)]{DK09}), we deduce that,
when $t\in[2R_0^2,\infty)$, for any $x,y\in\Omega$,
$$
|p_{t,L_R}(x,y)|\lesssim R_0^{-n}e^{-c(t-2R_0^2)},
$$
which further implies that
$$ I_3 \lesssim\int_{2R_0^2}^{\infty} R_0^{-n} e^{-c(t-2R_0^2)}\,dt\lesssim R_0^{-n}\lesssim|x-y|^{2-n}. $$
By this, \eqref{eq5.4}, \eqref{eq5.5}, and \eqref{eq5.6}, we conclude that, for any $x,y\in\Omega$ with $x\neq y$,
$$ 0\le G_R(x,y) \lesssim \begin{cases}
1 + \log\left(\dfrac{1}{|x-y|}\right)&\text{if }n=2,\\
\frac{1}{|x-y|^{n-2}}&\text{if }n\ge 3,
\end{cases} $$
which proves \eqref{eq1.11}.
		
Now, we show the H\"older regularity of the Green function $G_R$ when $n\ge 3$
by considering three cases.
Let $K_0\in[1,\infty)$ be as in Lemma \ref{le2.8}.
		
\emph{Case (1)} $2|x_1-x_2| < |x_1-y|\le 8K_0|x_1-x_2|$. In this case, we have
$ |x_1-x_2| \sim |x_1-y| \sim |x_2-y|.$
Thus, from this and \eqref{eq1.11}, we infer that, for any given $\delta\in (0,1]$,
\begin{align}\label{eq5.7}
|G_R(x_1,y) - G_R(x_2,y)| &\le G_R(x_1,y) + G_R(x_2,y) \lesssim |x_1-y|^{2-n}\notag \\
&\lesssim \dfrac{|x_1-x_2|^{\delta}}{|x_1-y|^{n-2+\delta}}.
\end{align}
		
\emph{Case (2)} $8K_0|x_1-x_2| <|x_1-y|$ and $|x_1-y|/(4K_0)\le\delta(x_1)$,
where $\delta(y):=\mathrm{dist\,}(y,\partial\Omega)$. In this case,
it is easy to find $x_2\in B(x_1,|x_1-y|/(5K_0))\subset\Omega$ and,
for any $x\in B(x_1,|x_1-y|/(5K_0))$, $|x-y|\sim|x_1-y|$.
By this, \cite[Lemma 11.30]{DFM23}, and \eqref{eq1.11}, we conclude that there exists $\delta\in (0,1]$ such that
\begin{align}\label{eq5.8}
|G_R(x_1,y)-G_R(x_2,y)|&\le \mathop{\mathrm{osc}\,}_{B(x_1,|x_1-x_2|)}
G_R(\cdot,y) \notag \\
&\lesssim \dfrac{|x_1-x_2|^{\delta}}{|x_1-y|^{\delta}}\left[\fint_{B(x_1,|x_1-y|/ (5K_0))}|G_R(x,y)|^2\,dx \right]^{\frac12}\notag \\
&\lesssim\dfrac{|x_1-x_2|^{\delta}}{|x_1-y|^{\delta}}\frac{1}{|x_1-y|^{n-2}}
\sim\dfrac{|x_1-x_2|^{\delta}}{|x_1-y|^{n-2+\delta}}.
\end{align}

\emph{Case (3)} $8K_0|x_1-x_2| <|x_1-y|$ and $|x_1-y|/(4K_0)>\delta(x_1)$.
In this case, let $R:=|x_1-y|/(2K_0)$. Then $\delta(x_1)<R/2$ and $|x_1-x_2|<R/4$. Take $x_0\in\partial\Omega$ such that $|x_0-x_1|=\delta(x_1)$.
Then
\begin{align*}
|x_0-y|\ge|x_1-y|-|x_0-x_1|\ge2K_0R-\frac{R}{4}\ge\frac{7K_0}{4}R,
\end{align*}
which implies that $y\in\Omega\backslash B(x_0,7K_0R/4)$.
Meanwhile, it is easy to find that $x_1,x_2\in B(x_0,3R/4)\subset T(x_0,3R/2)$,
where the tent domain $T(x_0,R)$ is as in Lemma \ref{le2.8}.
Thus, $y\not\in T(x_0,3R/2)$ and $G_R(\cdot,y)$ satisfies
\begin{equation}\label{eq5.9}
\begin{cases}
-\operatorname{div} (A\nabla G_R(\cdot,y)) = 0&\text{in}~~T(x_0,3R/2),\\
A\nabla G_R(\cdot,y)\cdot \boldsymbol{\nu} +\beta G_R(\cdot,y) =0&\text{on}~~T(x_0,3R/2)\cap\partial\Omega.
\end{cases}
\end{equation}
Applying Lemma \ref{le3.2} and Corollary \ref{c4.1} to \eqref{eq5.9},
we further conclude that there exists $\delta\in (0,1]$ such that
\begin{align}\label{eq5.10}
|G_R(x_1,y)-G_R(x_2,y)|&\lesssim\dfrac{|x_1-x_2|^{\delta}}{R^{\delta}}
\left[\fint_{B(x_0,3R/2)}|G_R(x,y)|^2\,dx\right]^{\frac12}\notag \\
&\lesssim\dfrac{|x_1-x_2|^{\delta}}{|x_1-y|^{\delta}}
\frac{1}{|x_1-y|^{n-2}}\sim\dfrac{|x_1-x_2|^{\delta}}{|x_1-y|^{n-2+\delta}}.
\end{align}
Then, from \eqref{eq5.7}, \eqref{eq5.8}, and \eqref{eq5.10}, it follows that \eqref{eq1.12} holds.
This then finishes the proof of Theorem \ref{th1.3}.
\end{proof}
	
\section{Proofs of Theorems \ref{th1.4} and \ref{th1.5}\label{S6}}
In this section, we prove the existence of the harmonic measure for the Robin problem \eqref{Robin-Pro}
and its absolute continuity with respect to the surface measure, by using
the Riesz representation theorem and both the maximum principle and the boundary Harnack inequality
for the Robin problem \eqref{Robin-Pro}.
	
We begin with establishing the maximum principle for the Robin problem \eqref{Robin-Pro}.
\begin{lemma}\label{le6.1}
Let $n\ge 2$, $s\in (n-2,n)$, and $(\Omega,\sigma)$ satisfy Assumption \ref{Ass} with $\Omega$ being bounded. Assume that the
function $\beta$ is as in \eqref{eq1.1}. Then, for any $f\in
C(\partial\Omega)$, there exists a unique $u\in W^{1,2}(\Omega) \cap C(\overline{\Omega})$
such that, for any $\varphi \in C_{\rm c}^{\infty}(\mathbb{R}^n)$,
\begin{equation*}
\int_{\Omega} A\nabla u \cdot \nabla \varphi\,dx + \int_{\partial \Omega}\beta
\mathrm{Tr}(u)~\varphi\,d\sigma = \int_{\partial \Omega} f\varphi\,d\sigma.
\end{equation*}
\end{lemma}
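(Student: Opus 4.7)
The plan is to deduce Lemma \ref{le6.1} essentially as a direct consequence of Theorem \ref{th1.1}. Since $\Omega$ is bounded by hypothesis, its boundary $\partial\Omega$ is a compact subset of $\mathbb{R}^n$, and by the Ahlfors regularity condition (A1) we have $\sigma(\partial\Omega) < \infty$. Consequently, for any $f\in C(\partial\Omega)$, the function $f$ is bounded, so $f\in L^\infty(\partial\Omega,\sigma)$ and hence $f\in L^p(\partial\Omega,\sigma)$ for every $p\in(s/(s+2-n),\infty]$. This places $f$ within the hypothesis range of Theorem \ref{th1.1}.

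First I would invoke Theorem \ref{th1.1} with, say, $p=\infty$ to produce a weak solution $u\in W^{1,2}(\Omega)$ to \eqref{Robin-Pro} with data $f$, together with the global H\"older estimate $u\in C^{0,\alpha}(\overline{\Omega})$ for any $\alpha\in(0,\alpha_0)$. The H\"older continuity gives $u\in C(\overline{\Omega})$ immediately, hence $u\in W^{1,2}(\Omega)\cap C(\overline{\Omega})$; this settles existence. For uniqueness, if $u_1,u_2\in W^{1,2}(\Omega)\cap C(\overline{\Omega})$ both satisfy the weak formulation, then in particular they are both weak solutions in $W^{1,2}(\Omega)$ in the sense of Theorem \ref{th2.2}. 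The uniqueness in $W^{1,2}(\Omega)$ furnished by Theorem \ref{th2.2} (equivalently, by the first part of Theorem \ref{th1.1}) then forces $u_1=u_2$ in $W^{1,2}(\Omega)$, and since both representatives are continuous on $\overline{\Omega}$ they coincide pointwise on $\overline{\Omega}$.

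There is essentially no hard step here: the real work has already been done in Sections \ref{S2} and \ref{S3}. The only point worth verifying carefully is that the hypotheses $f\in L^p(\partial\Omega,\sigma)$ with $p>s/(s+2-n)$ required by Proposition \ref{pro3.1} and Theorem \ref{th1.1} are met by continuous data, which is immediate from the boundedness of $\partial\Omega$ combined with the finite measure $\sigma(\partial\Omega)<\infty$. Thus the proof reduces to citing Theorem \ref{th1.1} and checking that $C(\partial\Omega)$ embeds into the relevant $L^p$-space, so I expect the write-up to be only a few lines.
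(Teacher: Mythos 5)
Your proposal is correct and matches the paper's own argument: the paper likewise obtains existence and uniqueness from the well-posedness result (Theorem \ref{th2.2}, equivalently the first part of Theorem \ref{th1.1}) and continuity on $\overline{\Omega}$ from the global H\"older regularity in Theorem \ref{th1.1}, the only checking being that $C(\partial\Omega)\subset L^p(\partial\Omega,\sigma)$ for the relevant $p$ since $\sigma(\partial\Omega)<\infty$.
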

\begin{proof}
The existence and uniqueness of the weak solution $u\in W^{1,2}(\Omega)$ can be deduced from
Theorem \ref{th2.2}. Moreover, by Theorem \ref{th1.1},
we further find that $u\in C(\overline{\Omega})$. This finishes the proof of
Lemma \ref{le6.1}.
\end{proof}
	
\begin{lemma}\label{le6.2}
Let $n\ge 2$, $s\in (n-2,n)$, and $(\Omega,\sigma)$ satisfy Assumption \ref{Ass} with $\Omega$ being bounded, and
let $\beta$ be the same as in \eqref{eq1.1} and satisfy $\beta\ge a_0$ on $\partial\Omega$ with $a_0$ being a given positive constant.
\begin{itemize}
\item[{\rm (i)}] Assume that $u$ is the weak solution to the Robin problem \eqref{Robin-Pro} with $0\le f\in L^p(\partial\Omega,\sigma)$
for some $p\in(s/(s+2-n),\infty]$.
Then $u\ge 0$ in $\Omega$.
\item[{\rm (ii)}]
Let $u$ be the weak solution to the Robin problem \eqref{Robin-Pro} with $f\in C(\partial\Omega)$. Then, for any $x\in \Omega$,
\begin{equation}\label{eq6.1}
|u(x)|\le\dfrac{1}{a_0}\max\limits_{z\in\partial\Omega}|f(z)|.
\end{equation}
\end{itemize}
\end{lemma}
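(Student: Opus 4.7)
The strategy is to prove (i) by a standard energy-test-function maximum principle and then derive (ii) by comparing $u$ with the constant functions $\pm M/a_0$, where $M := \max_{z\in\partial\Omega}|f(z)|$.

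For (i), I would first extend the weak formulation by density so that it holds for any test function $\varphi\in W^{1,2}(\Omega)$; the two bilinear terms are continuous in $\varphi$ under the $W^{1,2}$-norm by uniform ellipticity \eqref{eq1.2}, by Lemma \ref{le2.2}, and by the trace integrability provided by Theorem \ref{th2.1}(iii). Then I would substitute $\varphi = u^- := \max\{-u,0\}\in W^{1,2}(\Omega)$. Using the standard chain-rule identity $\nabla u\cdot\nabla u^- = -|\nabla u^-|^2$ almost everywhere together with $\mathrm{Tr}(u^-) = (\mathrm{Tr}(u))^-$ (which follows directly from the pointwise definition \eqref{eq1.4}, since the continuous map $t\mapsto t^-$ commutes with the Lebesgue-averaging limit), the weak formulation becomes
$$
-\int_\Omega A\nabla u^- \cdot \nabla u^-\,dx - \int_{\partial\Omega}\beta\,\bigl|(\mathrm{Tr}(u))^-\bigr|^2\,d\sigma = \int_{\partial\Omega} f\,(\mathrm{Tr}(u))^-\,d\sigma.
$$
The right-hand side is non-negative because $f\ge 0$ and $(\mathrm{Tr}(u))^-\ge 0$, while the left-hand side is non-positive by ellipticity and $\beta\ge 0$. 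Hence both sides vanish. Uniform ellipticity then forces $\nabla u^- = 0$ in $\Omega$, so $u^-$ equals some constant $c\ge 0$ on the connected domain $\Omega$ (connectedness is built into the Harnack chain condition (A3)). From the vanishing of the boundary integral together with $\beta\ge a_0$ on $E_0$ with $\sigma(E_0)>0$ (from \eqref{eq1.1}), I get $(\mathrm{Tr}(u))^- = 0$ on $E_0$; but $\mathrm{Tr}(u^-)\equiv c$ on $\partial\Omega$, so $c=0$ and thus $u\ge 0$.

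For (ii), set $M := \max_{z\in\partial\Omega}|f(z)|$ and consider $v := M/a_0 - u$. Then $v\in W^{1,2}(\Omega)\cap C(\overline\Omega)$ (by Lemma \ref{le6.1}) solves the Robin problem with boundary data
$$
-f + \beta\,\frac{M}{a_0}\ge -M + a_0\cdot\frac{M}{a_0} = 0,
$$
where I crucially use the sharper hypothesis $\beta\ge a_0$ on all of $\partial\Omega$ available in (ii). Applying part (i) to $v$ yields $u\le M/a_0$. A symmetric comparison with $u + M/a_0$, whose Robin data is $f + \beta M/a_0 \ge -M + M = 0$, gives $u \ge -M/a_0$, and these two bounds together prove \eqref{eq6.1}.

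The main technical subtlety I expect is the justification of two standard but non-trivial facts used in (i): the density of $C^\infty_{\rm c}(\mathbb{R}^n)$ restrictions in $W^{1,2}(\Omega)$ for the bounded NTA domains considered here (which legitimises the substitution $\varphi = u^-$), and the identity $\mathrm{Tr}(u^-) = (\mathrm{Tr}(u))^-$ for the abstract trace \eqref{eq1.4}. Both are built into the trace framework developed in \cite{DFM23} and used throughout Section \ref{S2}; the first follows because the corkscrew and Harnack-chain conditions prevent pathological boundary behaviour, and the second is immediate from the definition of $\mathrm{Tr}$ as a pointwise non-tangential average. Once these are in hand, the rest of the argument is a direct energy identity.
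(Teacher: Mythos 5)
Your proposal is correct and follows essentially the same route as the paper: testing the weak formulation with $u^-$ and using the sign of the two sides to conclude $u^-\equiv 0$ for (i), then comparing $u$ with $\pm M/a_0$ via the Robin data $\beta M/a_0\mp f\ge 0$ for (ii). The only (harmless) difference is that you spell out the connectedness and trace-of-negative-part details that the paper leaves implicit, and you use only $\beta\ge a_0$ on $E_0$ in (i), which is slightly more economical than the paper's use of $\beta\ge a_0$ on all of $\partial\Omega$.
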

\begin{proof}
We first prove (i). Let $u^- := \max\{-u,0\}$. Since $u^-$ is the maximum function of two Sobolev
functions in $W^{1,2}(\Omega)$, it follows that $u^-\in W^{1,2}(\Omega)$.
To show $u\ge0$ in $\Omega$, it suffices to prove $u^- = 0$ in $\Omega$.
Applying the density of $C_{\rm c}^{\infty}(\mathbb{R}^n)$ in $W^{1,2}(\Omega)$
(see, for instance, \cite[Theorem 2.1]{DDEMM24}), we conclude that
$$ \int_{\Omega} A\nabla u \cdot \nabla u^-\,dx + \int_{\partial \Omega}\beta
\mathrm{Tr}(u)\mathrm{Tr}(u^-)\,d\sigma = \int_{\partial \Omega} fu^-\,d\sigma. $$
Obviously, the left-hand side of the above equality is less than or equal to zero
and the right-hand side is more than or equal to zero. Thus, $\nabla u^- = 0$
in $\Omega$ and $u^- = 0$ on $\partial \Omega$, which implies $u^- = 0$ in $\Omega$.
Therefore, $u\ge0$ in $\Omega$, which completes the proof of (i).
		
Now, we show (ii). Let $M:=\max_{z\in\partial\Omega}|f(z)|$. From the assumption that
$u$ is the weak solution to the Robin problem \eqref{Robin-Pro}, it follows that, for any $\varphi\in C^\infty_{\rm c}(\mathbb{R}^n)$,
$$\int_{\Omega} A\nabla u \cdot \nabla\varphi\,dx + \int_{\partial \Omega}\beta
\mathrm{Tr}(u)\varphi\,d\sigma = \int_{\partial \Omega} f\varphi\,d\sigma,
$$
which implies that
$$\int_{\Omega} A\nabla\left(\frac{M}{a_0}-u\right) \cdot \nabla\varphi\,dx + \int_{\partial \Omega}\beta
\left[\frac{M}{a_0}-\mathrm{Tr}(u)\right]\varphi\,d\sigma = \int_{\partial \Omega}\left(\frac{M}{a_0}\beta- f\right)\varphi\,d\sigma.
$$
Thus, $M/a_0-u$ is the weak solution of the Robin problem
\begin{equation*}
\begin{cases}
-\mathrm{div}(A\nabla v) = 0~~&\text{in}~\Omega,\\
A\nabla v\cdot \boldsymbol{\nu}+\beta v =\frac{M\beta}{a_0}-f~~&\text{on}~\partial \Omega.
\end{cases}
\end{equation*}
By the assumptions that $\beta$ satisfies \eqref{eq1.1}, $\beta\ge a_0$ on $\partial\Omega$,
and $f\in C(\partial \Omega)$, we find that
$$0\le M\beta/a_0-f\in L^p(\partial\Omega,\sigma)$$
with some $p\in(s/(s+2-n),\infty]$.
Therefore, from this and (i), we deduce that $M/a_0-u\ge0$ in $\Omega$. Similarly,
we also have $M/a_0+u\ge 0$. Therefore, \eqref{eq6.1} holds.
This finishes the proof of (ii) and hence Lemma \ref{le6.2}.
\end{proof}
	
Next, we prove the existence of harmonic measures (see, for instance, \cite[Definition 1.2.6]{K94})
associated with the Robin problem \eqref{Robin-Pro}
by using the maximum principle in Lemma \ref{le6.2} and the Riesz representation theorem
(see, for instance, \cite[Theorem 1.38]{EG15}).
	
\begin{proposition}\label{pro6.1}
Let $n\ge 2$, $s\in (n-2,n)$, and $(\Omega, \sigma)$ satisfy Assumption \ref{Ass} with $\Omega$ being bounded.
Assume that $\beta$ satisfies \eqref{eq1.1} and $\beta\ge a_0$ on $\partial\Omega$
with $a_0$ being a given positive constant.
Then there exists a family of Radon measures $\{w_{R}^x\}_{x\in\Omega}$ supported
in $\partial \Omega$ such that, for any $f\in C(\partial\Omega)$, the unique weak solution $u_f$
 to the Robin problem \eqref{Robin-Pro} with data $f$ is given by, for any $x\in\Omega$,
\begin{equation}\label{eq6.2}
u_f(x)=\int_{\partial \Omega} f(z)\,dw_{R}^x(z).
\end{equation}
\end{proposition}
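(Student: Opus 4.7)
The plan is to fix $x\in\Omega$ and define a linear functional $L_x : C(\partial\Omega) \to \mathbb{R}$ by $L_x(f) := u_f(x)$, where $u_f$ is the unique weak solution to \eqref{Robin-Pro} with boundary data $f$. The representation \eqref{eq6.2} will then follow by producing the measures $w_R^x$ as those given by the Riesz representation theorem applied to $L_x$. The four properties to verify are well-definedness, linearity, positivity, and boundedness.

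First, well-definedness: by Lemma \ref{le6.1}, for each $f\in C(\partial\Omega)$ there is a unique $u_f\in W^{1,2}(\Omega)\cap C(\overline{\Omega})$, so the pointwise evaluation $u_f(x)$ makes sense for every $x\in\Omega$. Linearity of $f\mapsto u_f(x)$ is immediate from the linearity of the bilinear form $B[\cdot,\cdot]$ and the uniqueness statement in Theorem \ref{th2.2}: if $f = \alpha_1 f_1 + \alpha_2 f_2$, then $\alpha_1 u_{f_1} + \alpha_2 u_{f_2}$ solves \eqref{Robin-Pro} with boundary data $f$, hence equals $u_f$ by uniqueness. For positivity, if $f\ge 0$ on $\partial\Omega$, then Lemma \ref{le6.2}(i) yields $u_f\ge 0$ in $\Omega$, so $L_x(f) = u_f(x)\ge 0$. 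For boundedness, Lemma \ref{le6.2}(ii) gives
\begin{equation*}
|L_x(f)| = |u_f(x)| \le \frac{1}{a_0}\max_{z\in\partial\Omega}|f(z)| = \frac{1}{a_0}\|f\|_{C(\partial\Omega)},
\end{equation*}
so $L_x$ is a bounded linear functional on $C(\partial\Omega)$ with operator norm at most $1/a_0$.

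Having established that $L_x$ is a bounded positive linear functional on $C(\partial\Omega)$, I invoke the Riesz representation theorem for Radon measures on the compact Hausdorff space $\partial\Omega$ (note that $\partial\Omega$ is compact because $\Omega$ is bounded and hence $\overline{\Omega}$, and thus $\partial\Omega$, is compact). This yields a unique finite Radon measure $w_R^x$ on $\partial\Omega$ such that
\begin{equation*}
u_f(x) = L_x(f) = \int_{\partial\Omega} f(z)\,dw_R^x(z) \qquad \text{for every } f\in C(\partial\Omega),
\end{equation*}
which is exactly \eqref{eq6.2}. Carrying out this construction for every $x\in\Omega$ produces the desired family $\{w_R^x\}_{x\in\Omega}$, each member of which is supported in $\partial\Omega$.

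There is no serious obstacle here: all heavy lifting is already done. The only mild subtlety is to make sure the constant $1/a_0$ coming from the maximum principle (Lemma \ref{le6.2}(ii)) genuinely requires the global lower bound $\beta\ge a_0$ on all of $\partial\Omega$, which is the hypothesis built into the proposition; this is precisely why the assumption $\beta\ge a_0$ on $\partial\Omega$ (rather than only on $E_0$) is imposed in Theorem \ref{th1.4}. Once the measures $w_R^x$ are produced, the identification with $G_R(\cdot,x)$ in \eqref{eq1.15} and the mutual absolute continuity \eqref{eq1.14} of Theorem \ref{th1.4} will follow in the subsequent step by combining \eqref{eq6.2} with the representation formula \eqref{eq1.10} of Theorem \ref{th1.3}.
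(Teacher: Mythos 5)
Your proposal is correct and follows essentially the same route as the paper: define the functional $f\mapsto u_f(x)$, verify linearity and the bound $1/a_0$ via Lemma \ref{le6.2}(ii), and apply the Riesz representation theorem. Your explicit check of positivity (via Lemma \ref{le6.2}(i)), which guarantees the resulting measures are non-negative, is a small but welcome addition that the paper leaves implicit.
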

	
\begin{proof}
By Theorem \ref{th2.2}, we find that, for any given $f \in C(\partial \Omega)$, there exists a unique weak
solution $u_f$ to the Robin problem \eqref{Robin-Pro} with data $f$.
		
Moreover, the linearity of the map $f \mapsto u_f$ follows immediately from the
definition of weak solutions to the Robin problem \eqref{Robin-Pro} and its boundedness follows
from Lemma \ref{le6.2}(ii).
		
For any $x\in \Omega$, define the map $f\mapsto u_f(x)$. Since it is linear and
bounded with norm $1/a_0$ from $C(\partial \Omega)$ to $\mathbb{R}$, it follows from
the Riesz representation theorem (see, for instance, \cite[Theorem 1.38]{EG15})
that there exists a family of Radon measures $\{w_{R}^x\}_{x\in\Omega}$ on $\partial\Omega$
such that equation \eqref{eq6.2} holds. This finishes the proof of Proposition \ref{pro6.1}.
\end{proof}

\begin{proposition}\label{pro6.2}
Let $n\ge 2$, $s\in (n-2,n)$, and $(\Omega, \sigma)$ satisfy Assumption \ref{Ass} with $\Omega$ being bounded.
Assume that $\beta$ satisfies \eqref{eq1.1} and $\beta\ge a_0$ on $\partial\Omega$
with $a_0$ being a given positive constant.
Then, for any Borel measurable set $E\subset\partial\Omega$,
the function $u(x):= w^x_R(E)$ with $x\in\Omega$ satisfies that, for any $\varphi\in C_{\rm c}^{\infty}(\mathbb{R}^n)$,
\begin{equation}\label{eq6.3}
\int_{\Omega} A\nabla u\cdot\nabla\varphi\,dx+\int_{\partial\Omega}\beta
\mathrm{Tr}(u)\varphi\,d\sigma=\int_{E} \varphi\,d\sigma.
\end{equation}
That is, $u$ is the weak solution to the Robin problem \eqref{Robin-Pro} with data $f:={\bf 1}_E$.
\end{proposition}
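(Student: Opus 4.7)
The plan is to extend the representation formula \eqref{eq6.2} from continuous boundary data to indicator functions of Borel sets via the functional monotone class theorem. I would define the class $\mathcal{H}$ of bounded Borel functions $f\colon \partial\Omega\to\mathbb{R}$ for which the unique weak solution $u_f\in W^{1,2}(\Omega)$ to the Robin problem \eqref{Robin-Pro} with data $f$ (which exists by Theorem \ref{th2.2}, since bounded Borel functions lie in $L^p(\partial\Omega,\sigma)$ for every $p$) admits a H\"older continuous representative on $\overline{\Omega}$ (via Theorem \ref{th1.1}) satisfying
\begin{equation*}
u_f(x)=\int_{\partial\Omega} f(z)\,dw_R^x(z)\quad\text{for every }x\in\Omega.
\end{equation*}
By Proposition \ref{pro6.1} together with Theorem \ref{th1.1}, $C(\partial\Omega)\subset\mathcal{H}$, and $\mathcal{H}$ is a vector space containing the constant function $1$.

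The main step is to show that $\mathcal{H}$ is closed under bounded pointwise monotone increasing limits. Given $\{f_k\}\subset\mathcal{H}$ with $0\le f_k\nearrow f$ for some bounded Borel $f$, since $\sigma(\partial\Omega)$ is finite and $\{f_k\}$ is uniformly bounded, the dominated convergence theorem gives $f_k\to f$ in $L^{p_0}(\partial\Omega,\sigma)$ with $p_0:=2s/(2s+2-n)$ (or any finite exponent when $n=2$). Applying the a priori estimate \eqref{eq2.8} from Theorem \ref{th2.2} to $u_{f_k}-u_{f_{k'}}$ (the weak solution with data $f_k-f_{k'}$) shows that $\{u_{f_k}\}$ is Cauchy in $W^{1,2}(\Omega)$, and its limit $v$, upon passage to the limit in the weak formulation, is the unique weak solution with data $f$, so $v=u_f$. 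Theorem \ref{th1.1}, applied to the same differences, upgrades this to convergence in $C^{0,\alpha}(\overline{\Omega})$, hence uniform convergence on $\overline{\Omega}$. Simultaneously, because each $w_R^x$ is a positive finite Radon measure, the monotone convergence theorem yields
\begin{equation*}
u_{f_k}(x)=\int_{\partial\Omega} f_k\,dw_R^x\nearrow \int_{\partial\Omega} f\,dw_R^x\qquad\text{for every }x\in\Omega.
\end{equation*}
Comparing the two limits identifies $u_f(x)=\int_{\partial\Omega} f\,dw_R^x$ for every $x\in\Omega$, so $f\in\mathcal{H}$.

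Since $C(\partial\Omega)$ is a multiplicative family generating the Borel $\sigma$-algebra on $\partial\Omega$, the functional monotone class theorem then implies that $\mathcal{H}$ contains all bounded Borel functions. Specializing to $f=\mathbf{1}_E$ for the given Borel set $E\subset\partial\Omega$ identifies the function $u(x)=w_R^x(E)$ with the H\"older continuous weak solution $u_{\mathbf{1}_E}\in W^{1,2}(\Omega)$, which directly yields \eqref{eq6.3}.

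The hard part is that the representation formula must hold for \emph{every} $x\in\Omega$ and not merely for almost every such $x$: at this stage of the paper the mutual absolute continuity $w_R^x\ll \sigma$ is not yet known (it will in fact be deduced from Proposition \ref{pro6.2} itself in the proof of Theorem \ref{th1.4}), so $\sigma$-a.e. assertions cannot be transferred to $w_R^x$-a.e. assertions. This is precisely why the uniform convergence of $u_{f_k}$ on $\overline{\Omega}$ provided by the H\"older estimate \eqref{eq1.5} in Theorem \ref{th1.1} --- strictly stronger than the $W^{1,2}(\Omega)$-convergence given by Theorem \ref{th2.2} alone --- is indispensable for matching the two limiting expressions of $u_f(x)$ pointwise.
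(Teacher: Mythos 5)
Your argument is correct, and it reaches the conclusion by a genuinely different route from the paper. The paper fixes $x_0\in\Omega$, uses inner and outer regularity of the two Radon measures $w_R^{x_0}$ and $\sigma$ to build Urysohn functions $f_i$ with $K_i\subset E\subset O_i$, $w_R^{x_0}(O_i\setminus K_i)<1/i$, and $\sigma(O_i\setminus K_i)<1/i$, shows via the oscillation/H\"older estimate of Proposition \ref{pro3.1} (together with convergence at the single point $x_0$) that the non-monotone sequence $u_i$ converges locally uniformly to the weak solution with data $\mathbf{1}_E$, identifies its value at $x_0$ with $w_R^{x_0}(E)$, and then lets $x_0$ vary using uniqueness of the weak solution. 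You instead invoke the functional monotone class theorem: the class $\mathcal{H}$ is reduced to closure under bounded increasing limits, for which the measure-side limit $\int f_k\,dw_R^x\nearrow\int f\,dw_R^x$ is automatic by monotone convergence, simultaneously for every $x$, while the PDE-side limit is supplied by exactly the same analytic input the paper uses, namely the a priori bound \eqref{eq2.8} together with the global $C^{0,\alpha}$ estimate \eqref{eq1.5} applied to differences of solutions. What your approach buys is that it dispenses with the hand-built approximating sequence and the two-measure regularity argument, and it yields the representation formula for all bounded Borel data at once rather than only for indicators; what it costs is the appeal to the monotone class machinery (and the standing verification that $C(\partial\Omega)$ is a multiplicative class generating the Borel $\sigma$-algebra, which is immediate here since $\partial\Omega$ is compact metric). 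Both arguments correctly hinge on the point you flag: uniform, not merely $W^{1,2}$ or $\sigma$-a.e., convergence of the approximate solutions is indispensable because $w_R^x\ll\sigma$ is not yet available at this stage.
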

	
\begin{proof}
We prove this proposition by following the approach used in the proof of \cite[Theorem 5.5]{DDEMM24}.
Fix $x_0\in\Omega$. From Proposition \ref{pro6.1}, we deduce that $w_R^{x_0}$ is a Radon measure supported in
$\partial \Omega$. Moreover,  notice that $\sigma$ is also a Radon measure on $\partial\Omega$.
Therefore, for each $i \in \mathbb{N}$, there exist a compact set $K_i$ and an open set $O_i$ in $\partial\Omega$
such that $K_i\subset E\subset O_i$, $\omega^{x_0}_R(O_i\backslash K_i)< 1/i$, and
$\sigma(O_i\backslash K_i) < 1/i$.
		
For any given $i \in \mathbb{N}$, let $f_i \in C(\partial \Omega)\cap L^2(\partial \Omega,\sigma)$ be such that
$0 \le f_i \le 1$, $f_i \equiv 1$ on $K_i$, and $\mathrm{supp}(f_i) \subset O_i$.
For any $x\in\Omega$, define
$$
u_i(x):= \int_{\partial \Omega} f_i(z)\, dw_R^x(z).
$$
Then, by Proposition \ref{pro3.1} and Lemma \ref{le6.2}(ii), we conclude that,
for any $i\in\mathbb{N}$, $u_i\in C(\overline{\Omega})$ and $\sup_{y\in\overline{\Omega}}|u_i(y)|\le 1/a_0$.
Moreover, from the definition of $\{u_i\}_{i\in\mathbb{N}}$, we deduce that the sequence $\{u_i(x)\}_{i\in\mathbb{N}}$ converges for any
given $x \in\Omega$. Meanwhile, by Proposition \ref{pro6.1}, we find that, for any $i\in\mathbb{N}$,
$u_i$ is the weak solution to the Robin problem \eqref{Robin-Pro} with data $f_i$, which, combined with Proposition \ref{pro3.1}
and the choice that $\sigma(O_i\backslash K_i) < 1/i$ for any $i\in\mathbb{N}$, implies that,
for any $i,j\in\mathbb{N}$, any given compact subset $K$ of $\Omega$, and any $y,z\in K$,
\begin{align*}
|u_i(y)-u_j(y)| &\le |u_i(z)-u_j(z)| + C_{(K)}\|f_i-f_j\|_{L^p(\partial\Omega,\sigma)}\\
&\le|u_i(z)-u_j(z)|+ C_{(K)}\max\left\{i^{-1/p},j^{-1/p}\right\},
\end{align*}
where $p\in (s/(s+2-n),\infty)$
and $C_{(K)}$ is a positive constant depending on $K$ but independent of $i$ and $j$.
Thus, for any given compact subset $K$ of $\Omega$, $\{u_i\}_{i\in\mathbb{N}}$ converges uniformly on
$K$ to a function $u$ which weakly solves $-\mathrm{div}(A \nabla u) = 0$ in $\Omega$.
		
Next, we show that, for any given $x_0\in\Omega$, $u(x_0) = w_R^{x_0}(E)$.	
Indeed, for any $\varepsilon\in(0,\infty)$, there exists
$i_0\in\mathbb{N}$ large enough such that $1/i_0<\varepsilon$, which, together with the choice of the sets
$\{K_i\}_{i\in\mathbb{N}}$ and $\{O_i\}_{i\in\mathbb{N}}$, implies that
when $i \ge i_0$, $w_R^{x_0}(O_i\setminus K_i) < 1/i<\varepsilon$.
From this and the definition of $u_i$, we deduce that, when $i \ge i_0$,
$$|u_i(x_0)-w^{x_0}_R(E)| \le w_R^{x_0}(O_i\setminus K_i)<\varepsilon,$$
which further implies that $u(x_0)=w_R^{x_0}(E)$.
		
Finally, we prove that $u$ satisfies \eqref{eq6.3}.
By the definition of $u_i$, we find that, for any $\varphi\in C_{\rm c}^\infty(\mathbb R^n)$,
\begin{equation}\label{eq6.4}
\int_{\Omega} A\nabla u_i\cdot \nabla \varphi\,dx +\int_{\partial \Omega}\beta
\mathrm{Tr}(u_i)\varphi\,d\sigma
=\int_{\partial \Omega} f_i\varphi\,d\sigma.
\end{equation}
From Theorem \ref{th2.2} and the construction of $\{f_i\}_{i\in\mathbb{N}}$, we infer that, for any $i\in\mathbb{N}$,
$$
\|u_i\|_{W^{1,2}(\Omega)} \le C\|f_i\|_{L^2(\partial \Omega,\sigma)}\le  C \sigma(E),
$$
which implies that $\{u_i\}_{i\in\mathbb{N}}$ is weakly convergent in $W^{1,2}(\Omega)$ and hence
$\{u_i\}_{i\in\mathbb{N}}$ weakly converges to $u$ in $W^{1,2}(\Omega)$. By this and Theorem \ref{th2.1}(iii),
we further conclude that $\{\mathrm{Tr}(u_i)\}_{i\in\mathbb{N}}$ weakly converges to $\mathrm{Tr}(u)$ in $L^2(\partial \Omega,\sigma)$.
Thus, letting $i\to\infty$ in \eqref{eq6.4}, we then find that $u$ satisfies \eqref{eq6.3}, which completes the
proof of Proposition \ref{pro6.2}.
\end{proof}
	
Now, we prove Theorems \ref{th1.4} and \ref{th1.5}
by using Propositions \ref{pro6.1} and \ref{pro6.2} and Theorems
\ref{th1.2} and \ref{th1.3}.
	
\begin{proof}[Proof of Theorem \ref{th1.4}]
Notice that both the existence of harmonic measures $\{w_{R}^x \}_{x\in \Omega}$
associated with the Robin problem \eqref{Robin-Pro} and
the expression \eqref{eq1.13} follow from Proposition
\ref{pro6.1}.

Now, we prove \eqref{eq1.14} and \eqref{eq1.15}. By \eqref{eq1.10} and Proposition \ref{pro6.2}, we find that, for any
$x\in \Omega$ and any Borel measurable set $E\subset \partial \Omega$,
\begin{equation}\label{eq6.5}
w_R^x(E) = \int_{E} G_R(y,x)\,d\sigma(y),
\end{equation}
which, together with Theorem \ref{th1.3}, yields that $\sigma(E)=0$ implies $w_R^x(E) = 0$.
Thus, for any given $x\in \Omega$, $\sigma \ll w_R^x$.
Moreover, for any given $x\in\Omega$, if $w_R^x(E)= 0$ for the Borel measurable set $E\subset \partial \Omega$,
from \eqref{eq6.5} and the continuity and the non-negativity of the Green function $G_R(y,x)$ on the variable $y$,
it follows that either $G_R(y,x) = 0$ for any $y \in E$ or $\sigma(E) = 0$. If $G_R(y,x) = 0$, then, by Theorem
\ref{th1.2}, we find that $G_R(y,x) = 0$ for all $y \in \overline{\Omega}$
sufficiently close to $E$. In this case, applying Theorem \ref{th1.2} again, we find that
$G_R(\cdot,y) = 0$ on $\Omega$. However, if we take $\phi = 1$ in
\eqref{eq1.9}, this leads to a contradiction. Thus,
$\sigma(E)= 0$, which further yields that $w_R^x(E) = 0$ implies $\sigma(E)=0$, that is,  $w_R^x\ll\sigma$.
Therefore, for any $x\in\Omega$, $w_R^x \ll \sigma \ll w_R^x$
and hence \eqref{eq1.14} holds. Furthermore, the expression \eqref{eq1.15}
directly follows from \eqref{eq6.5}.
This then finishes the proof of Theorem \ref{th1.4}.
\end{proof}
	
\begin{proof}[Proof of Theorem \ref{th1.5}]
Let $x_0\in\partial\Omega$ and $r\in(0,\mathrm{diam\,}(\Omega)/(4K)]$ satisfy $\|\beta\|_{L^{q_0}(B(x_0,2Kr)\cap\partial\Omega)}\le c_0$,
where the constants $K\in(1,\infty)$ and $c_0\in(0,1)$ are as in Theorem \ref{th1.2}.
Assume further that $x\in\Omega\backslash B(x_0,2K^2r)$.
From \eqref{eq6.5}, it follows that, for any measurable
set $E\subset B(x_0,r)\cap\partial\Omega$,
$$
\sigma(E)\inf_{y \in B(x_0,r)\cap\partial\Omega} G_R(y,x)\le w_R^x(E)\le
\sigma(E)\sup_{y \in B(x_0,r)\cap\partial\Omega} G_R(y,x)
$$
and
\begin{align*}
\sigma(B(x_0,r)\cap\partial\Omega)\sup_{y\in B(x_0,r)\cap \partial\Omega}
G_R(y,x)&\ge w_R^x(B(x_0,r)\cap\partial\Omega) \\
&\ge\sigma(B(x_0,r)\cap\partial\Omega)
\inf_{y \in B(x_0,r)\cap\partial\Omega} G_R(y,x),
\end{align*}
which implies that, for any measurable set $E\subset B(x_0,r)\cap \partial \Omega$,
\begin{align}\label{eq6.6}
&\frac{\inf_{y \in B(x_0,r)\cap\partial\Omega} G_R(y,x)}{\sup_{y \in B(x_0,r)
\cap\partial\Omega} G_R(y,x)}\frac{\sigma(E)}{\sigma(B(x_0,r)\cap\partial\Omega)}\notag \\
&\quad\le \frac{w_R^x(E)}{w_R^x(B(x_0,r)\cap\partial\Omega)}\le \frac{\sup_{y\in B(x_0,r)\cap\partial\Omega
} G_R(y,x)}{\inf_{y\in B(x_0,r)\cap\partial\Omega} G_R(y,x)}
\frac{\sigma(E)}{\sigma(B(x_0,r)\cap\partial\Omega)}.
\end{align}
Based on \eqref{eq6.6}, to show \eqref{eq1.16},
it suffices to prove
\begin{align}\label{eq6.7}
\sup_{y \in B(x_0,r)\cap\partial\Omega}G_R(y,x) \lesssim \inf_{y \in B(x_0,r)
\cap\partial\Omega} G_R(y,x).
\end{align}
Since $G_R(\cdot,x)$ is continuous up to the boundary (see Theorem
\ref{th1.3}), it follows that \eqref{eq6.7} can be deduced
from a more general version
\begin{equation}\label{eq6.8}
\sup_{y \in B(x_0,r)\cap\Omega} G_R(y,x) \lesssim \inf_{y \in B(x_0,r)\cap\Omega} G_R(y,x).
\end{equation}
By the definition of the Green function $G_R$, we find that $G_R(x,\cdot)$ is a weak solution to the local Robin problem
\eqref{eq1.6}. Thus, from Theorem \ref{th1.2},
we deduce that \eqref{eq6.8} holds.
This finishes the proof of Theorem \ref{th1.5}.
\end{proof}
	
%\smallskip
	
%\noindent\textbf{Data Availability Statement.} Data sharing is not applicable to this article
%as no data sets were generated or analysed.
	
%\noindent{\bf Conflict of Interests.} No conflict of interests.

\bigskip
	
\noindent Jiayi Wang and Sibei Yang
	
\medskip
	
\noindent School of Mathematics and Statistics, Gansu Key Laboratory of Applied
Mathematics and Complex Systems, Lanzhou University, Lanzhou 730000, The
People's Republic of China
	
\smallskip
	
\noindent {\it E-mails}: \texttt{wangjiayi2024@lzu.edu.cn} (J. Wang)
	
\hspace{0.888cm} \texttt{yangsb@lzu.edu.cn} (S. Yang)
	
\bigskip
	
\noindent Dachun Yang (Corresponding author)
	
\medskip
	
\noindent Laboratory of Mathematics and Complex Systems (Ministry of Education
of China), School of Mathematical Sciences, Beijing Normal University, Beijing 100875,
The People's Republic of China
	
\smallskip
	
\noindent{\it E-mail:} \texttt{dcyang@bnu.edu.cn}	
	
\end{document}